\theoremstyle{plain}
\newtheorem{theorem}{Theorem}
\newtheorem{lemma}[theorem]{Lemma}
\newtheorem{corollary}[theorem]{Corollary}
\newtheorem{fact}[theorem]{Fact}
\theoremstyle{definition}
\newtheorem{definition}[theorem]{Definition}
\theoremstyle{remark}
\def\ps@pprintTitle{%
 \let\@oddhead\@empty
 \let\@evenhead\@empty
 \def\@oddfoot{}%
 \let\@evenfoot\@oddfoot}
\begin{document}
\begin{frontmatter}

\title{Properties of the Fibonacci-sum graph}

\author[umm]{Andrii Arman}
\ead{armana@myumanitoba.ca}

\author[umm]{David S. Gunderson\corref{dsg}\fnref{fn1}}
\ead{david.gunderson@umanitoba.ca}

\author[umcs]{Pak Ching Li}
\ead{Ben.Li@umanitoba.ca}

\fntext[fn1]{Research supported in part by the Natural
 Sciences and Engineering Research Council of Canada}
 \address[umm]{Department of Mathematics
 University of Manitoba,
 Winnipeg, Manitoba, Canada}
 \address[umcs]{Department of Computer Science,
 University of Manitoba,
 Winnipeg, Manitoba, Canada}

\begin{abstract}
For each positive integer $n$,  the Fibonacci-sum graph $G_n$ on
vertices $1,2,\ldots,n$ is defined by two vertices forming an edge if and only if
they sum to a Fibonacci number. It is known that each $G_n$ is bipartite,
and all Hamiltonian paths in each $G_n$ have been classified. In this paper,
it is shown that each $G_n$ has at most one non-trivial automorphism,
which is given explicitly.  Other properties of $G_n$ are also found,
including the degree sequence, the treewidth,  the nature of the bipartition, and that $G_n$ is
outerplanar.
\end{abstract}

\begin{keyword}
Fibonacci numbers \sep sum graph

 \MSC{05C75, 11B39}
\end{keyword}

\end{frontmatter}

%

\section{Introduction}
The Fibonacci sequence $\{F_n\}_{n\geq 0}$ is defined by 
$F_0=0, F_1 = 1$ and
for $n\geq 2$, $F_n = F_{n-1}+F_{n-2}$.  Each $F_i$ is called a 
{\em Fibonacci number}. As defined in \cite{FKMOP:14}, 
for each $n\geq 1$, the {\em Fibonacci-sum graph} $G_n=(V,E)$  is the
 graph defined on vertex set
$V=[n]=\{1,2,\ldots,n\}$ with edge set
\[E=\{\{i,j\}:i,j\in V, i\neq j,\  i+j\mbox{ is a Fibonacci number}\}.\]
By this definition, each $G_n$ is a simple graph (no loops or multiple edges).
For example, $G_6$ is depicted in Figure \ref{fi:G6}.
\begin{figure}[h]
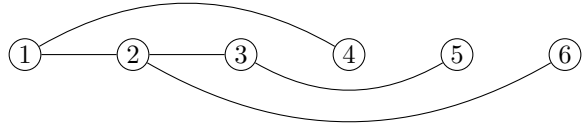
\center
\tikzpicture[scale=0.2]
	\tikzstyle{vertex}=[circle, draw=black,  minimum size=12pt,inner sep=0pt]
	
	\node[vertex] (one) {$1$};
	\node[vertex] (two) [right = of one] {$2$};
	\node[vertex] (three) [right = of two] {$3$};
	\node[vertex] (four) [right = of three] {$4$};
	\node[vertex] (five) [right = of four] {$5$};
	\node[vertex] (six) [right = of five] {$6$};
        \draw (one) to (two);
	\draw (two) to (three);
	\draw (three) to [bend right = 30] node[anchor=south]{}(five);
	\draw (two) to [bend right = 30] node[anchor=south]{}(six);
	\draw (one) to [bend left = 30] node[anchor=south]{}(four);
\endtikzpicture
\caption{The Fibonacci-sum graph $G_6$}\label{fi:G6}
\end{figure}

Inspired by a question of Barwell \cite{Barw:06} about $G_{34}$, K. Fox, Kinnersley, McDonald, Orlow, and 
Puleo \cite{FKMOP:14} classified those Fibonacci-sum graphs that have Hamiltonian paths,
gave a description of such paths, and examined generalizations to other related
graphs. Some of these results are reviewed in Section \ref{ss:hampaths},
and some basic properties of Fibonacci numbers are given in Section \ref{ss:fibnofacts}. 

The main results in this paper are about
additional properties of Fibonacci-sum graphs.  Fibonacci-sum graphs are
connected; this simple result was also observed by Costain \cite[p.~26]{Cost:08},
and is repeated here in Corollary \ref{co:Gnconnected}.
In Section \ref{se:degrees}, degrees of vertices in Fibonacci-sum graphs are 
made explicit. In Section \ref{se:bipartite}, it is shown that every $G_n$ is bipartite
(Theorem \ref{th:bipartite}, first  observed in \cite{Silv:77} and in a more 
general setting, in \cite{AEH:78}), 
and the nature of such bipartitions is examined in more detail. 
The structure of cycles and chords is examined in Section \ref{se:cycles}.

In Section \ref{se:treewidth}, it is shown that for all $n\geq 7$,
 the Fibonacci-sum graph
$G_n$ has treewidth 2.
 In Section \ref{se:planarity}, each $G_n$ is found to be outerplanar (Theorem \ref{th:outerplanar}). In 
Section \ref{se:automorphisms}, for each $n\geq 9$, 
 all automorphisms of 
 $G_n$ are given, and in each case, there are at most two.
 (Section \ref{se:automorphisms} is long since the proof breaks down
 into 11 cases, each case requiring some detailed checking.)
 
\section{Background}\label{se:background}
\subsection{Hamiltonian paths}\label{ss:hampaths}
 In 2006,  Barwell \cite{Barw:06} asked for an ordering of $\{1,2,\ldots, 34\}$ so that
 consecutive pairs sum to a Fibonacci number. Such an ordering corresponds to
 a Hamiltonian path in $G_{34}$.

 In 2014, K. Fox, Kinnersley, McDonald, Orlow, and Puleo \cite{FKMOP:14} answered
 Barwell's question (by giving a recipe for showing how to find the Hamiltonian path) 
 and generalized the result  to whenever $34$ (which is $F_9$)
  is replaced by any Fibonacci number or one less than a Fibonacci number.
 In doing so, they characterized those
 $n$ for which $G_n$ has a Hamiltonian path, and in those cases, they identified all
 such paths. 
 \begin{theorem} [\cite{FKMOP:14}]\label{th:FKMOPmain}
 The Fibonacci-sum graph $G_n$  has
 a Hamiltonian path if and only if  $n$ is either 9, 11, a Fibonacci number, or one less
 than a Fibonacci number.
 The Hamiltonian path is unique
 except when $i\equiv 1\pmod{3}$, $n\in \{F_i,F_i-1\}$, in which case
  there are only two Hamiltonian paths,  paths which agree except on the last three
  vertices (and so the last four vertices form a cycle).
  \end{theorem}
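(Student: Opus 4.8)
The plan is to prove Theorem~\ref{th:FKMOPmain} by understanding the structure of $G_n$ forced by Fibonacci arithmetic, and then tracking which vertices have degree~$1$, since any Hamiltonian path must start and end at vertices that are either endpoints of the path or have high enough degree to be traversed. First I would establish the key arithmetic fact about the neighborhood of each vertex: given $v\in[n]$, its neighbors are exactly those $w=F_k-v$ lying in $[n]$ with $w\neq v$, and because the Fibonacci numbers grow geometrically, only a bounded window of indices $k$ can contribute. In particular I would show that each vertex has at most two neighbors coming from the two Fibonacci numbers bracketing it, so $G_n$ has maximum degree bounded (this dovetails with the degree-sequence results promised in Section~\ref{se:degrees}). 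The immediate consequence is that $G_n$ is essentially a union of paths with very limited branching, which makes Hamiltonicity a delicate counting question rather than an NP-hard obstruction.

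\medskip

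The core of the forward direction (necessity of the stated values of $n$) would come from a parity/degree obstruction. Using the bipartiteness from Theorem~\ref{th:bipartite}, I would count the two sides of the bipartition as functions of $n$; a Hamiltonian path forces the two part-sizes to differ by at most one. I would compute these part-sizes explicitly in terms of the Zeckendorf-type structure of $[n]$ relative to the nearest Fibonacci number, and show the near-balance condition fails except precisely when $n$ is a Fibonacci number, one less than a Fibonacci number, or one of the sporadic small cases $9$ and $11$. For the sufficiency direction I would construct the Hamiltonian path explicitly by a descending greedy/recursive scheme: the largest vertex $n$ has a unique or near-unique forced neighbor (the Fibonacci complement), and peeling it off reduces to a smaller Fibonacci-sum graph, giving an inductive construction that also exposes why the path is forced.

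\medskip

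For the uniqueness claim I would argue that at almost every step of the descending construction the next vertex is \emph{forced} because the current vertex has exactly one unused neighbor, so the path is rigid. The only freedom arises at the very end of the construction, where a short cycle on the last four vertices appears; there the final three vertices can be traversed in two ways, giving exactly two Hamiltonian paths. I would pin down that this $4$-cycle occurs precisely under the congruence condition $i\equiv 1\pmod 3$ with $n\in\{F_i,F_i-1\}$ by checking when the terminal segment of the recursion closes up into a cycle rather than a path, which reduces to a residue computation on the Fibonacci indices.

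\medskip

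The main obstacle I expect is the bookkeeping in the part-size computation and in the base cases of the recursion: the sporadic values $9$ and $11$ show that the clean ``Fibonacci number or one less'' pattern breaks at small $n$, so the induction must be seeded carefully and the small cases verified by hand (or by the explicit degree data), and the residue analysis isolating $i\equiv 1\pmod 3$ will require tracking Fibonacci numbers modulo small moduli rather than any single slick identity.
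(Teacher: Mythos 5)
The paper does not actually prove this theorem --- it is quoted from \cite{FKMOP:14} as background --- so there is no internal proof to compare against; I can only assess your sketch on its own terms, and it has two genuine gaps.

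First, your structural claim is false: it is not true that ``each vertex has at most two neighbors coming from the two Fibonacci numbers bracketing it,'' and $G_n$ does not have bounded maximum degree. Only the \emph{large} vertices (those $v$ with $F_k\leq v$ where $F_k\leq n<F_{k+1}$) have at most two neighbours (Lemma~\ref{le:lastvertex}); a small vertex $x$ has degree roughly $\ell-k$ where $F_\ell\leq x+n<F_{\ell+1}$ (Theorem~\ref{th:degreeformula}), which grows like $\log n$ for fixed $x$. For instance, in $G_{18}$ vertex $1$ is adjacent to $2,4,7,12$. So $G_n$ is not ``essentially a union of paths,'' and the rigidity you rely on for the descending construction does not come for free.

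Second, and more seriously, the necessity direction via bipartition balance does not work. Take $n=10$: the bipartition is $\{1,3,6,8,9\}$ versus $\{2,4,5,7,10\}$, perfectly balanced ($S(10)=S(F_6)+S(2)=0$ in the notation of Lemmas~\ref{le:SFibonacci} and~\ref{le:SN}), yet $G_{10}$ has no Hamiltonian path because $8$, $9$, and $10$ are all pendant vertices and a path can accommodate at most two vertices of degree~$1$. The balance condition is necessary but far from sufficient, so it cannot isolate the set $\{9,11\}\cup\{F_i\}\cup\{F_i-1\}$. The obstruction that actually does the work is the pendant count: by Theorem~\ref{th:manypendants}, when $F_k\leq n<F_{k+1}$ the vertices $F_k,F_k+1,\ldots$ up to either $n$ or $F_{k+2}-n-1$ are all of degree~$1$ (plus possibly one half-Fibonacci vertex), and this exceeds two unless $n$ is within distance roughly~$1$ of $F_k$ or $F_{k+1}$. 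Your sufficiency and uniqueness sketch (forced steps in a descending construction, with the only freedom at a terminal $4$-cycle) is consistent in spirit with Theorems~\ref{th:FKMOPpath} and~\ref{th:endpoints}, but as written the proof would need the necessity argument replaced wholesale.
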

  In the proof of Theorem \ref{th:FKMOPmain}, the authors also showed that each
  $G_n$ had at least one vertex of degree 1, thereby preventing any Hamiltonian
  cycles (this fact is also proved below in Lemma \ref{le:pendants}). 
   
  From Figure~\ref{fi:G6}, it is seen that indeed $G_6$ has no Hamiltonian path.
  Since 34 is a Fibonacci number, 
 Theorem \ref{th:FKMOPmain}  guarantees that $G_{34}$ has a unique
 Hamiltonian path, partially answering the question of Barwell. 
 The same authors \cite{FKMOP:14} also gave ways to find a Hamiltonian path,
 provided one exists: 
\begin{theorem}[see  {\cite[Thm.~1 proof]{FKMOP:14}}]\label{th:FKMOPpath}
For $k\geq 5$, the subgraph of $G_{F_k}$ formed by edges whose sum is in
$\{F_{k-1}, F_k, F_{k+1}\}$ is a Hamiltonian path.
\end{theorem}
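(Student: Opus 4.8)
The plan is to show that the subgraph $H$ in question---whose edges are exactly the pairs summing to $F_{k-1}$, $F_k$, or $F_{k+1}$---is a connected graph of maximum degree at most $2$ that is not a cycle; any such graph spanning all $F_k$ vertices is a Hamiltonian path. I would split the edges into two groups: the \emph{middle} edges $\{v,F_k-v\}$ (sum $F_k$) and the \emph{outer} edges (sum $F_{k-1}$ or $F_{k+1}$). The first observation is that each group is a matching. A vertex $v$ lies in at most one middle edge; and since a sum-$F_{k-1}$ edge forces $v\le F_{k-1}-1$ while a sum-$F_{k+1}$ edge forces $v\ge F_{k-1}$, a vertex meets at most one edge from each outer sum and cannot meet both. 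Hence $H$ has maximum degree at most $2$, so it is a disjoint union of paths and cycles.

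The key device is to read the vertex set $\{1,\dots,F_k\}$ as $\mathbb{Z}/F_k\mathbb{Z}$ with $F_k\equiv 0$. Then the middle edges are the pairs $\{v,\tau(v)\}$ for the involution $\tau(v)=-v$, and---because $F_{k+1}-v\equiv F_{k-1}-v\pmod{F_k}$---the outer edges are precisely the pairs $\{v,\sigma(v)\}$ for the involution $\sigma(v)=F_{k-1}-v$. I would verify directly that $\sigma(v)$ lands in $\{1,\dots,F_k\}$ and that the realized sum is $F_{k-1}$ when $v<F_{k-1}$ and $F_{k+1}$ when $v\ge F_{k-1}$, so that no spurious or missing edges arise and no edge is both middle and outer; the hypothesis $k\ge 5$ guarantees that the three Fibonacci numbers are distinct and that the construction is nondegenerate.

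The main step---and the only real obstacle---is connectivity, which I would obtain from a single number-theoretic fact. Composing the two involutions gives $\tau\sigma(v)=-(F_{k-1}-v)=v-F_{k-1}\equiv v+F_{k-2}\pmod{F_k}$, so $\tau\sigma$ is the rotation $\rho\colon v\mapsto v+F_{k-2}$. Since consecutive Fibonacci numbers are coprime, $\gcd(F_{k-2},F_k)=\gcd(F_{k-2},F_{k-1})=1$, and therefore $\rho$ is a single $F_k$-cycle on $\mathbb{Z}/F_k\mathbb{Z}$. Consequently the group $\langle\sigma,\tau\rangle$ acts transitively: for any two vertices $u,w$ one can write $w=g(u)$ with $g$ a word in $\sigma$ and $\tau$, and expanding that word into successive applications of $\sigma$ and $\tau$ (skipping any step that fixes its input, which happens only at a fixed point of an involution) produces a walk from $u$ to $w$ inside $H$. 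Thus $H$ is connected.

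It remains to rule out that $H$ is a single cycle. The vertex $F_k$ has no middle edge (its $F_k$-partner is $0$, i.e.\ itself) but exactly one outer edge, to $F_{k-1}$, so it has degree $1$; since every vertex of a cycle has degree $2$, $H$ is not a cycle. A connected graph of maximum degree at most $2$ that is not a cycle is a path, and as it spans all $F_k$ vertices it is a Hamiltonian path. (Alternatively, acyclicity can be read off the rotation directly: any cycle must alternate between outer and middle edges, so traversing it applies some power $\rho^\ell$ returning to the start, forcing $F_k\mid\ell$ and hence a length at least $2F_k>F_k$, which is impossible.)
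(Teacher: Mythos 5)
Your proof is correct, and it is worth noting that the paper does not actually prove Theorem~\ref{th:FKMOPpath}: it is quoted from \cite{FKMOP:14} with a pointer to the proof of Theorem~1 there, so there is no in-paper argument to compare against line by line. Your submission therefore supplies a complete, self-contained proof where the paper offers only a citation. The skeleton (split the edges by sum into a ``middle'' matching and an ``outer'' matching, conclude maximum degree $2$, then establish connectivity and exhibit a degree-$1$ vertex) is the natural one and is in the same spirit as the source; the genuinely nice contribution is the modular repackaging: reading the vertices as $\mathbb{Z}/F_k\mathbb{Z}$ turns the two matchings into the involutions $\tau(v)=-v$ and $\sigma(v)=F_{k-1}-v$, whose composition is rotation by $F_{k-2}$, and coprimality of consecutive Fibonacci numbers then gives transitivity of $\langle\sigma,\tau\rangle$ in one stroke. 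That single fact simultaneously yields connectivity (via the walk obtained by expanding a word in $\sigma,\tau$) and, through your alternative remark, acyclicity, so you never need the case analysis on which of $F_{k-1},F_k,F_{k+1}$ is even that a direct ``count the degree-one vertices and trace the path'' argument would require. All the verifications you promise do go through: a vertex meets at most one edge of each sum because the sum-$F_{k-1}$ and sum-$F_{k+1}$ edges live on the disjoint ranges $v<F_{k-1}$ and $v\geq F_{k-1}$; $\sigma$ and $\tau$ have no common fixed point (that would force $F_{k-1}\equiv 0 \pmod{F_k}$), so no vertex is isolated; and $F_k$ indeed has degree exactly $1$, ruling out the cycle case. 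This is a valid and arguably cleaner route than an explicit path-tracing argument.
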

 
 All Hamiltonian paths guaranteed by Theorem \ref{th:FKMOPpath} were found:
  \begin{theorem}[see {\cite[Thm.~3]{FKMOP:14}}] \label{th:endpoints}
  Let $k\geq 5$. If $k\equiv 0\pmod{3}$,
  then $G_{F_k}$ contains a unique Hamiltonian path with end vertices $F_k$ and $\frac{F_k}{2}$.
 If $k\equiv 2\pmod{3}$, the unique Hamiltonian path in $G_{F_k}$
 has end vertices $F_k$ and $\frac{F_{k+1}}{2}$.
 If $k\equiv 1\pmod{3}$, there are two Hamiltonian paths, one with endpoints $F_k$ and
 $\frac{F_{k-1}}{2}$, the other with endpoints $F_k$ and $F_k-\frac{F_{k-4}}{2}$.
  \end{theorem}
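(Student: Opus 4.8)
The plan is to build the argument on the explicit Hamiltonian path supplied by Theorem~\ref{th:FKMOPpath}: let $H$ be the subgraph of $G_{F_k}$ consisting of all edges $\{i,j\}$ with $i+j\in\{F_{k-1},F_k,F_{k+1}\}$, which that theorem guarantees is a Hamiltonian path. Since a Hamiltonian path has exactly two endpoints, it suffices to locate the two vertices of degree $1$ in $H$. For any vertex $v$ the only candidate $H$-neighbours are $F_{k-1}-v$, $F_k-v$, and $F_{k+1}-v$, each being an actual neighbour precisely when it lies in $[F_k]\setminus\{v\}$. A short computation I would record shows that the edge of sum $F_{k-1}$ is present exactly when $1\le v<F_{k-1}$ and $v\neq F_{k-1}/2$, the edge of sum $F_k$ exactly when $1\le v<F_k$ and $v\neq F_k/2$, and the edge of sum $F_{k+1}$ exactly when $F_{k-1}\le v\le F_k$ and $v\neq F_{k+1}/2$. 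From this, $F_k$ is always an endpoint, since its only admissible candidate is $F_{k+1}-F_k=F_{k-1}$, giving $\deg_H(F_k)=1$.

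The arithmetic input that fixes the second endpoint is that a Fibonacci number is even exactly when its index is divisible by $3$, so exactly one of the three consecutive numbers $F_{k-1},F_k,F_{k+1}$ is even; its half is the vertex at which one candidate neighbour degenerates into a self-loop. I would check that $F_k/2$ (when $k\equiv0$, and $F_k/2<F_{k-1}$), $F_{k-1}/2$ (when $k\equiv1$), and $F_{k+1}/2$ (when $k\equiv2$, with $F_{k-1}<F_{k+1}/2<F_k$) each have degree $1$ in $H$: the loop-forming candidate is discarded and exactly one genuine neighbour survives. As these two explicit degree-$1$ vertices are distinct, they are the endpoints of $H$. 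For $k\equiv0$ and $k\equiv2\pmod 3$, Theorem~\ref{th:FKMOPmain} says $H$ is the unique Hamiltonian path, so the endpoints $\{F_k,F_k/2\}$ and $\{F_k,F_{k+1}/2\}$ follow at once.

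The remaining and hardest case is $k\equiv1\pmod 3$, where $H$ realizes only one of the two Hamiltonian paths — the one with endpoints $F_k$ and $F_{k-1}/2$ — while the second path uses an edge outside $H$. Here I would isolate the four vertices
\[ b=\tfrac{F_{k-4}}{2},\qquad v=\tfrac{F_{k-1}}{2},\qquad a=\tfrac{F_{k-1}}{2}+F_{k-2},\qquad c=F_k-\tfrac{F_{k-4}}{2}, \]
which are integers in $[F_k]$ with $b<v<a<c$ for $k\ge7$, and verify via the identities $F_{k-2}=F_{k-3}+F_{k-4}$ and $F_{k-1}+F_{k-4}=2F_{k-2}$ that $b$–$v$–$a$–$c$–$b$ is a $4$-cycle in $G_{F_k}$ with edge sums $F_{k-2},F_k,F_{k+1},F_k$. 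The crux is the structural claim that each of $v,a,c$ has \emph{all} of its $G_{F_k}$-neighbours inside $\{b,v,a,c\}$, so that $v,a,c$ are attached to the rest of the graph only through $b$; this is the main obstacle, as it forces me to rule out every other Fibonacci sum for each of the three vertices. Granting it, any Hamiltonian path must enter this pendant region through $b$ and terminate inside it, and the only two ways to thread the cycle from $b$, namely $b,v,a,c$ and $b,c,a,v$, yield endpoints $c=F_k-\tfrac{F_{k-4}}{2}$ and $v=\tfrac{F_{k-1}}{2}$ respectively. Together with $F_k$ as the common opposite endpoint and the count of exactly two paths from Theorem~\ref{th:FKMOPmain}, this gives the stated result. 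I expect the neighbour-exclusion computations in this last case, together with a hand check of the base case $k=7$ to anchor the description, to be the only genuinely fiddly parts.
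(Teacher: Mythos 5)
The paper does not prove this statement at all: Theorem~\ref{th:endpoints} is imported verbatim from \cite{FKMOP:14} (their Theorem~3), so there is no in-paper argument to compare against. Your proposal is a sound self-contained derivation from the two other imported results, and the route is sensible: reading off the two degree-$1$ vertices of the explicit path $H$ of Theorem~\ref{th:FKMOPpath} via the three candidate neighbours $F_{k-1}-v$, $F_k-v$, $F_{k+1}-v$, and using Fact~\ref{fa:evenfibs} to see that exactly one of $F_{k-1},F_k,F_{k+1}$ is even, correctly yields the endpoint pairs $\{F_k,\tfrac{1}{2}F_k\}$, $\{F_k,\tfrac{1}{2}F_{k+1}\}$, $\{F_k,\tfrac{1}{2}F_{k-1}\}$ in the three residue classes; your interval checks (e.g.\ $F_{k-2}<\tfrac{1}{2}F_k<F_{k-1}$ and $F_{k-1}<\tfrac{1}{2}F_{k+1}<F_k$, which are Fact~\ref{fa:halffib}) all hold. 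The $k\equiv 1\pmod 3$ analysis is also correct: with $b=\tfrac{1}{2}F_{k-4}$, $v=\tfrac{1}{2}F_{k-1}$, $a=\tfrac{1}{2}F_{k-1}+F_{k-2}$, $c=\tfrac{3}{2}F_{k-1}=F_k-\tfrac{1}{2}F_{k-4}$, one verifies that $v,a,c$ each have exactly the two neighbours you claim and no others (the sums available to each lie only in $\{F_{k-2},F_k,F_{k+1}\}$, and the loop at $v$ kills the $F_{k-1}$ option), so $\{v,a,c\}$ hangs off the cut vertex $b$ as an induced path $v\hbox{--}a\hbox{--}c$. One small point worth spelling out at the end: knowing only that there are two Hamiltonian paths and that each must terminate at $v$ or at $c$ does not by itself preclude both terminating at $v$; you should close this either by citing the clause of Theorem~\ref{th:FKMOPmain} that the two paths agree except on the last three vertices, or by observing that $H$ ends $\ldots,b,c,a,v$ and that swapping its tail to $\ldots,b,v,a,c$ produces a second, distinct Hamiltonian path ending at $c$, which by the count of exactly two must be the other one. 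With that sentence added the argument is complete.
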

  
For example,  to find the (unique) Hamiltonian path in $G_{34}$ (asked for by Barwell),
 Theorem  \ref{th:endpoints} says to
use 34 and 17 as endpoints. Starting with 17, and simply applying 
Theorem \ref{th:FKMOPpath}
(using sums only in $\{21,34,55\}$) gives the path: 17, 4,  30, 25,  9, 12,  22, 33, 1,
 20, 14, 7, 27, 28,  6, 15, 19, 2, 32, 23, 11, 10, 24, 31, 3, 18, 16, 5, 29, 26, 8,  
 13, 21, 34.
(At each point in the algorithm, the next vertex is uniquely determined. )
  
   As an example where two Hamiltonian paths exist (with $k=7$ in the statement of
  Theorem \ref{th:FKMOPpath})  in $G_{13}$, using only
  sums in $\{8,13,21\}$, the two  paths are: 13, 8, 5, 3, 19, 11, 2, 6, 7, 1, 12, 9,  4 and
  13,  8,  5, 3, 19, 11, 2,  6, 7, 1, 4,  9, 12. \medskip
  
  \subsection{Fibonacci number facts}\label{ss:fibnofacts}
  Here are some well-known facts about Fibonacci numbers that are used below.
\begin{fact}\label{fa:evenfibs}
The Fibonacci number $F_k$ is even iff $k\equiv 0\pmod{3}$.
\end{fact}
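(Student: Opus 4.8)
The plan is to prove this by induction on $k$, tracking the parity of $F_k$ as $k$ runs through the three residue classes modulo $3$. Because the recurrence $F_k = F_{k-1}+F_{k-2}$ determines the parity of $F_k$ from the parities of its two immediate predecessors, it suffices to show that the parities of the sequence repeat with period $3$, in the pattern even, odd, odd.

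Concretely, I would prove the three statements
\[
F_{3m}\ \text{is even},\qquad F_{3m+1}\ \text{is odd},\qquad F_{3m+2}\ \text{is odd}
\]
simultaneously by induction on $m\geq 0$. The base case $m=0$ is immediate from $F_0=0$, $F_1=1$, and $F_2=1$. For the inductive step, assuming all three hold for a given $m$, I would use the recurrence to compute that $F_{3m+3}=F_{3m+2}+F_{3m+1}$ is a sum of two odd numbers, hence even; that $F_{3m+4}=F_{3m+3}+F_{3m+2}$ is even plus odd, hence odd; and that $F_{3m+5}=F_{3m+4}+F_{3m+3}$ is odd plus even, hence odd. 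This reestablishes all three statements for $m+1$ and closes the induction.

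Since every nonnegative integer $k$ lies in exactly one residue class modulo $3$, the three conclusions taken together say precisely that $F_k$ is even if and only if $k\equiv 0\pmod{3}$, which is the claim. There is no genuine obstacle here; the only point needing a little care is to carry all three parity statements through the induction at once rather than trying to prove the ``even'' case in isolation, since the parity of $F_{3m+3}$ depends on the parities of both $F_{3m+1}$ and $F_{3m+2}$. Equivalently, one could phrase the same argument as observing that the pair $(F_k \bmod 2,\, F_{k+1}\bmod 2)$ cycles through $(0,1),(1,1),(1,0)$ and returns to $(0,1)$, so the parities are periodic with period $3$.
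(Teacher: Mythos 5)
Your proof is correct and complete: the simultaneous induction on the triple of parities $(F_{3m},F_{3m+1},F_{3m+2})$ is exactly the right way to handle the dependence of each parity on the two preceding ones, and the base case and inductive step both check out. The paper states this as a well-known fact without supplying any proof, so there is nothing to compare against; your argument is the standard one and fills the gap cleanly.
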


\begin{fact}\label{fa:sumevenfibs}
 For each $\ell\geq 0$,
 \[\sum_{i=0}^\ell F_{2i}=F_{2\ell+1}-1.\]
 \end{fact}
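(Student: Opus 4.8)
The plan is to prove the identity $\sum_{i=0}^\ell F_{2i}=F_{2\ell+1}-1$ by induction on $\ell$, which is the cleanest route for a closed-form sum of this type.

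For the base case, I would take $\ell=0$: the left side is $F_0=0$, and the right side is $F_1-1=1-1=0$, so the identity holds. For the inductive step, assume the identity holds for some fixed $\ell\geq 0$; I must show it holds for $\ell+1$. I would write
\[
\sum_{i=0}^{\ell+1}F_{2i}=\left(\sum_{i=0}^{\ell}F_{2i}\right)+F_{2\ell+2}
=\bigl(F_{2\ell+1}-1\bigr)+F_{2\ell+2},
\]
using the inductive hypothesis. The key algebraic move is then to recognize that $F_{2\ell+1}+F_{2\ell+2}=F_{2\ell+3}$ by the defining Fibonacci recurrence (applied with index $2\ell+3$), so the right side becomes $F_{2\ell+3}-1=F_{2(\ell+1)+1}-1$, which is exactly the claimed formula at $\ell+1$. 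This completes the induction.

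There is no serious obstacle here; the only thing to be careful about is the off-by-one bookkeeping in the indices, namely confirming that the target exponent $2(\ell+1)+1$ equals $2\ell+3$ so that the telescoping via the recurrence lands on the correct Fibonacci number. An alternative (non-inductive) approach would be a telescoping argument: using $F_{2i}=F_{2i+1}-F_{2i-1}$ for $i\geq 1$ one could collapse the sum directly, but handling the $i=0$ term and the definition $F_{-1}$ makes this marginally more delicate than the straightforward induction, so I would favor induction.
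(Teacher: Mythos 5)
Your induction is correct and complete: the base case $F_0=0=F_1-1$ checks out, and the inductive step correctly uses the recurrence $F_{2\ell+1}+F_{2\ell+2}=F_{2\ell+3}$ to land on $F_{2(\ell+1)+1}-1$. The paper states this as a well-known fact without proof, so there is nothing to compare against; your argument is the standard one and fills the gap properly.
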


\begin{fact} For each $k\geq 3$,
\[\frac{F_{k-3}+F_k}{2}=F_{k-1}.\]
\end{fact}

\begin{fact} \label{fa:halffib}
For $k\geq 0$, $F_k<\frac{F_{k+2}}{2}<F_{k+1}$.
\end{fact}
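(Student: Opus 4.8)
The plan is to notice that $\frac{F_{k+2}}{2}$ is nothing but the arithmetic mean of $F_k$ and $F_{k+1}$, which collapses the two-sided inequality into a single elementary monotonicity statement. Indeed, the defining recurrence $F_{k+2}=F_{k+1}+F_k$ gives $\frac{F_{k+2}}{2}=\frac{F_k+F_{k+1}}{2}$, and the midpoint of two reals $a<b$ always satisfies $a<\frac{a+b}{2}<b$. Hence the whole Fact is equivalent to the single strict inequality $F_k<F_{k+1}$.

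First I would clear the denominators to make this equivalence transparent. The left inequality $2F_k<F_{k+2}$ and the right inequality $F_{k+2}<2F_{k+1}$ both simplify, after substituting $F_{k+2}=F_{k+1}+F_k$, to exactly $F_k<F_{k+1}$. If one prefers an explicit estimate of the slack, the same substitution shows that for $k\ge 1$ both gaps coincide, namely $\frac{F_{k+2}}{2}-F_k = F_{k+1}-\frac{F_{k+2}}{2}=\frac{F_{k+1}-F_k}{2}=\frac{F_{k-1}}{2}$, so each gap is positive precisely when $F_{k-1}>0$. Establishing $F_k<F_{k+1}$ is then immediate: for $k\ge 2$ it follows from $F_{k+1}=F_k+F_{k-1}$ together with $F_{k-1}\ge 1$ (every Fibonacci number of positive index is a sum of nonnegative terms and hence positive), and for $k=0$ it reads $0<1$.

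The only point requiring care is the boundary behaviour at small indices, and this is where I expect the sole subtlety to lie rather than in any genuine difficulty. At $k=1$ one has $F_1=F_2=1$, so $F_k<F_{k+1}$ fails and the three quantities $F_k,\ \frac{F_{k+2}}{2},\ F_{k+1}$ all equal $1$; the strict inequalities therefore hold exactly for $k=0$ and $k\ge 2$. Since every later use of this Fact involves indices well above $1$, this degenerate case is harmless, but it is worth isolating so that strictness is invoked only where it is valid. Beyond this remark there is no real obstacle: the statement is a one-line consequence of the recurrence once $\frac{F_{k+2}}{2}$ is recognized as the average of $F_k$ and $F_{k+1}$.
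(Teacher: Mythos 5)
Your proof is correct, and in fact the paper offers no proof at all: this is stated as a ``well-known fact'' in Section \ref{ss:fibnofacts} without justification, so there is no argument of the authors' to compare against. Your reduction --- observing that $\frac{F_{k+2}}{2}=\frac{F_k+F_{k+1}}{2}$ is the midpoint of $F_k$ and $F_{k+1}$, so that both strict inequalities are equivalent to $F_k<F_{k+1}$ --- is the natural one and is carried out cleanly. More importantly, your boundary check catches a genuine error in the paper's statement: at $k=1$ one has $F_1=F_2=1$ and $\frac{F_3}{2}=1$, so the asserted strict inequalities fail. The Fact is true exactly for $k=0$ and $k\geq 2$, and since every invocation in the paper uses indices $k\geq 3$ or so, the error is harmless downstream, but the hypothesis ``$k\geq 0$'' should really read ``$k\geq 2$'' (or the case $k=1$ should be excluded). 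Flagging that is exactly the kind of care the statement deserves.
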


\begin{fact}\label{fa:twoFssumtoF}
The only pairs of Fibonacci numbers whose sum is another Fibonacci
number are the consecutive pairs. That is, if for some $i,j,k$, $F_i+F_j=F_k$,
then $j=i+1$ and $k=i+2$ or $j=i-1$ and $k=i+1$.
\end{fact}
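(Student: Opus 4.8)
The plan is to assume without loss of generality that $i\le j$ and then pin down $k$ by sandwiching $F_k$ between two consecutive Fibonacci numbers. This reduction also explains the symmetry of the two listed conclusions: the ordered relation $i\le j$ will produce $j=i+1,\ k=i+2$, and relabeling to undo the assumption $i\le j$ yields the mirror case $j=i-1,\ k=i+1$.

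First I would bound $F_k=F_i+F_j$ from both sides. Assuming $i\ge 1$ (so $F_i\ge 1>0$) gives $F_k=F_i+F_j>F_j$, and $F_i\le F_j$ (from $i\le j$) gives $F_k=F_i+F_j\le 2F_j$. Applying Fact~\ref{fa:halffib} with index $j$, namely $2F_j<F_{j+2}$, the two estimates combine to
\[
F_j<F_k<F_{j+2}.
\]
Since the Fibonacci numbers are strictly increasing for indices at least $2$, the only index strictly between $j$ and $j+2$ is $j+1$, so $F_k=F_{j+1}$ and hence $k=j+1$. Substituting this into $F_i+F_j=F_{j+1}$ and invoking the defining recurrence $F_{j+1}=F_j+F_{j-1}$ leaves $F_i=F_{j-1}$, whence $i=j-1$. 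That is precisely the case $j=i+1,\ k=i+2$, and the other case follows by symmetry.

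The main obstacle is purely one of small-index bookkeeping: because the sequence is not strictly increasing at its start ($F_0=0$ and $F_1=F_2=1$), the two index deductions $F_k=F_{j+1}\Rightarrow k=j+1$ and $F_i=F_{j-1}\Rightarrow i=j-1$ can fail when the indices involved are too small. I would handle these degenerate possibilities by direct inspection. Alternatively, and more cleanly for the purposes of this paper, one may note that every Fibonacci number that actually arises as a sum of two distinct labels from $[n]$ is at least $1+2=3=F_4$, so only indices in the strictly increasing range are relevant; on that range all Fibonacci values are distinct and the argument above goes through without exceptions.
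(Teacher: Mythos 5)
Your argument is correct, but there is nothing in the paper to compare it against: Fact~\ref{fa:twoFssumtoF} is listed in Section~\ref{ss:fibnofacts} among the ``well-known facts about Fibonacci numbers'' and is stated without proof. Your sandwich $F_j < F_i+F_j \le 2F_j < F_{j+2}$ (using $F_i>0$ and Fact~\ref{fa:halffib}), followed by cancellation against the recurrence, is the standard argument and is sound; note that monotonicity alone (non-decreasing) already gives $j<k<j+2$, so the step $k=j+1$ does not even need strict increase. You are right to flag the small-index degeneracies, and they are genuinely present: with $F_0=0$ one has $F_0+F_j=F_j$, and with $F_1=F_2=1$ one has, e.g., $F_1+F_3=F_4$, so the indexed form of the conclusion can fail literally unless one fixes a canonical indexing or restricts to indices $\ge 2$; your remark that every sum of two distinct vertices of $[n]$ is at least $3=F_4$ is the right way to see that these degeneracies never matter for the graphs $G_n$.
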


\section{Degrees}\label{se:degrees}
In a graph, vertices of degree 1 are called {\em pendant vertices},
or simply, pendants. When $n$ is a Fibonacci number, the Fibonacci-sum graph
on $n$ vertices  has a pendant.
\begin{lemma}[\cite{FKMOP:14}]\label{le:Fkonenbr}
 For each $k\geq 1$, in $G_{F_k}$, the vertex $F_k$ has
only one neighbour, namely $F_{k-1}$.
\end{lemma}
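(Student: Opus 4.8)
The plan is to compute directly which vertices can be adjacent to $F_k$. By definition, a vertex $j\in\{1,\ldots,F_k\}$ is a neighbour of $F_k$ precisely when $j\neq F_k$ and $F_k+j$ is a Fibonacci number. First I would record the range of this sum: since $1\le j\le F_k-1$, we have
\[
F_k < F_k+j < 2F_k .
\]
So the entire question reduces to identifying which Fibonacci numbers lie strictly between $F_k$ and $2F_k$.

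Next I would argue that $F_{k+1}$ is the \emph{unique} Fibonacci number in the open interval $(F_k,2F_k)$. Indeed, $F_{k+1}$ is the next Fibonacci number after $F_k$, so no Fibonacci number lies in $(F_k,F_{k+1})$. Applying Fact \ref{fa:halffib} at index $k-1$ gives $F_{k+1}<2F_k$, so $F_{k+1}$ itself lies in the interval, while applying it at index $k$ gives $2F_k<F_{k+2}$, so the following Fibonacci number $F_{k+2}$ already exceeds $2F_k$. Hence the only possibility is $F_k+j=F_{k+1}$, which forces $j=F_{k+1}-F_k=F_{k-1}$. Conversely, $F_{k-1}$ is a legitimate vertex of $G_{F_k}$ (for $k\ge 3$ we have $1\le F_{k-1}<F_k$) and $F_k+F_{k-1}=F_{k+1}$ is a Fibonacci number, so $F_{k-1}$ is genuinely a neighbour. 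This shows that $F_{k-1}$ is the one and only neighbour of $F_k$.

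There is no real obstacle here: the whole content lies in pinning down the single Fibonacci number caught in the interval $(F_k,2F_k)$, and Fact \ref{fa:halffib} supplies exactly the two inequalities $F_{k+1}<2F_k<F_{k+2}$ needed for that. The only point requiring a little care is the handful of small indices, where $G_{F_k}$ degenerates (for instance $F_1=F_2=1$ give a single-vertex graph); these I would dispose of by direct inspection, leaving the inequality argument above to cover all remaining $k$.
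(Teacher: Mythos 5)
Your argument is correct and is essentially the paper's own approach: the paper cites this lemma from \cite{FKMOP:14} without a standalone proof, but its generalization, Lemma \ref{le:lastvertex}, is proved by exactly the same device of trapping the sum in an interval ($F_k<x+n$ and an upper bound ruling out all but one or two Fibonacci numbers), and specializing that proof to $n=F_k$ reproduces your computation. Your handling of the degenerate indices $k=1,2$ by inspection is appropriate, since Fact \ref{fa:halffib} (and the lemma itself) is only meaningful once $F_{k-1}$ is a genuine vertex.
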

The basic idea used to prove Lemma \ref{le:Fkonenbr} can be extended;
the following is essentially contained in \cite{FKMOP:14}, but formulated
slightly differently here.
\begin{lemma}\label{le:lastvertex}
Let  $n\geq 2$, and let $k$ be so that $F_k\leq n<F_{k+1}$.
In $G_n$,  vertex $n$ is adjacent to only
\begin{align*} \begin{cases} 
           F_{k+1}-n &\mbox{ if } n\leq \frac{F_{k+2}}{2};\\
           F_{k+1}-n \mbox{ and }F_{k+2} -n&\mbox{ if } n>\frac{F_{k+2}}{2}.
                       \end{cases}
\end{align*}
\end{lemma}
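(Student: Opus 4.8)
The plan is to characterize the neighbours of $n$ directly from the definition: a vertex $m$ with $1\le m\le n-1$ is adjacent to $n$ precisely when $n+m$ is a Fibonacci number. Since $m$ ranges over $\{1,\dots,n-1\}$, the sum $n+m$ ranges over the integers in $[n+1,\,2n-1]$, so the whole task reduces to listing the Fibonacci numbers lying in this interval and then reading off $m=F-n$ for each such $F$. Note that $n\ge 2$ forces $k\ge 3$, so $F_{k-1}<F_k$ and the sequence is strictly increasing from $F_k$ onward; I would use this monotonicity throughout.

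First I would pin down the candidate Fibonacci numbers. Because $F_k\le n<F_{k+1}$, the smallest Fibonacci number exceeding $n$ is $F_{k+1}$, so nothing in $[n+1,\,F_{k+1}-1]$ is Fibonacci. Next I would show $F_{k+1}$ itself lies in the interval: from $F_{k+1}=F_k+F_{k-1}<2F_k\le 2n$ one gets $F_{k+1}\le 2n-1$, while $F_{k+1}>n$ gives the lower bound. At the top end, $F_{k+3}=F_{k+2}+F_{k+1}>2F_{k+1}>2n$ (using $n<F_{k+1}$), so $F_{k+3}$ and every larger Fibonacci number exceed $2n-1$. Hence the only Fibonacci numbers that can appear in $[n+1,\,2n-1]$ are $F_{k+1}$ and $F_{k+2}$, with $F_{k+1}$ always present.

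It then remains to decide exactly when $F_{k+2}$ falls in the interval. Since $F_{k+2}>F_{k+1}>n$, the lower bound is automatic, so $F_{k+2}\in[n+1,\,2n-1]$ iff $F_{k+2}\le 2n-1$, i.e.\ iff $F_{k+2}<2n$, i.e.\ iff $n>\tfrac{F_{k+2}}{2}$. Translating each admissible Fibonacci sum $F$ into the neighbour $m=F-n$ then yields the stated dichotomy: $n$ is adjacent to $F_{k+1}-n$ always, and additionally to $F_{k+2}-n$ exactly when $n>\tfrac{F_{k+2}}{2}$.

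The only delicate point is the boundary $n=\tfrac{F_{k+2}}{2}$, and the main thing to check is that it is both genuinely realizable (so that both cases of the lemma occur) and correctly excluded. By Fact~\ref{fa:halffib}, $F_k<\tfrac{F_{k+2}}{2}<F_{k+1}$, so the threshold really does lie inside the range $[F_k,F_{k+1})$ of possible values of $n$; when $F_{k+2}$ is even this value of $n$ occurs, and there $F_{k+2}-n=n$ would be a self-loop rather than an edge. Working with the strict upper bound $n+m\le 2n-1$ (forced by $m\le n-1$) handles this automatically, which is why the condition is the strict $n>\tfrac{F_{k+2}}{2}$ rather than $n\ge\tfrac{F_{k+2}}{2}$. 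I expect no serious obstacle beyond keeping these bounds strict and correctly placed.
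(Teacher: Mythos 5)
Your proof is correct and follows essentially the same route as the paper: bound the possible sums $n+m$ to show only $F_{k+1}$ and $F_{k+2}$ can arise, then observe that $F_{k+2}$ occurs exactly when $n>\tfrac{F_{k+2}}{2}$. Your explicit treatment of the boundary case $n=\tfrac{F_{k+2}}{2}$ (where $F_{k+2}-n=n$ would be a loop) is a slightly more careful articulation of why the inequality is strict, but the argument is the same.
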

\begin{proof} Let $x\in [1,n]$ be adjacent to $n$; in other words, 
let $i$ be so that $x+n=F_i$. Since
\[F_k<1+F_k\leq x+n\]
and
\[x+n \leq 2n-1<2F_{k+1}<F_{k+1}+F_{k+2}=F_{k+3},\]
it follows that $i\in\{k+1, k+2\}$.

When $n\leq \frac{F_{k+2}}{2}$, $x+n<2n\leq F_{k+2}$,
so $i\neq k+2$. Thus $x=n-F_{k+1}$ is the only possible solution. 
When $n>\frac{F_{k+2}}{2}$,
the possible solution $x=F_{k+2}-n$ is also realizable (with $x>F_k$).
\end{proof}

Lemma \ref{le:lastvertex} and induction imply the following known 
result (see, e.g., \cite{Cost:08}):
\begin{corollary}\label{co:Gnconnected}
For each $n\geq 1$, $G_n$ is connected.
\end{corollary}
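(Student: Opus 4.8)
The plan is to induct on $n$, using Lemma \ref{le:lastvertex} to attach the vertex $n$ to the already-connected graph on the smaller vertex set. The key structural observation is that $G_{n-1}$ sits inside $G_n$ as an induced subgraph: both graphs use the same Fibonacci-sum edge rule, so restricting $G_n$ to the vertices $[n-1]$ recovers exactly $G_{n-1}$. Hence $G_n$ is obtained from $G_{n-1}$ by adding the single vertex $n$ together with its incident edges, and once I know $G_{n-1}$ is connected it suffices to show that $n$ has at least one neighbour among $\{1,\ldots,n-1\}$, since joining a new vertex by an edge to a connected graph keeps it connected.

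For the base case, $G_1$ is a single vertex and is trivially connected. For the inductive step I would fix $n\geq 2$, assume $G_{n-1}$ is connected, and choose $k$ with $F_k\leq n<F_{k+1}$ (so $k\geq 3$). By Lemma \ref{le:lastvertex}, vertex $n$ is adjacent to $F_{k+1}-n$ in \emph{both} cases of that lemma, so I may use this candidate neighbour regardless of how $n$ compares with $\frac{F_{k+2}}{2}$. The one thing to verify is that $F_{k+1}-n$ is a genuine vertex of $G_{n-1}$, i.e.\ that it lies in $[1,n-1]$: the inequality $n<F_{k+1}$ gives $F_{k+1}-n\geq 1$, while $n\geq F_k$ gives $F_{k+1}-n\leq F_{k+1}-F_k=F_{k-1}<F_k\leq n$, hence $F_{k+1}-n\leq n-1$. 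Thus $n$ is joined by an edge to the connected graph $G_{n-1}$, and $G_n$ is connected.

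The argument reduces to this one short verification once Lemma \ref{le:lastvertex} is available, so there is no substantial obstacle; the only point requiring care is confirming that the named neighbour $F_{k+1}-n$ is admissible (neither $0$ nor equal to $n$ itself), which is precisely the pair of Fibonacci inequalities above. I would note in passing that the ``small $n$'' clause of Lemma \ref{le:lastvertex} already provides this neighbour, so the case split in the lemma plays no role here.
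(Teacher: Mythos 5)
Your proof is correct and follows exactly the route the paper intends: the paper gives no written proof, stating only that Corollary~\ref{co:Gnconnected} follows from Lemma~\ref{le:lastvertex} and induction, and your argument supplies precisely those details (including the necessary check that $F_{k+1}-n$ lies in $[1,n-1]$). No issues.
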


The next lemma is also implicit in \cite[proof of Thm.~4]{FKMOP:14}, but is stated
and proved here separately for later reference.
\begin{lemma} \label{le:pendants}
Let $n \geq 2$, and  let $k$ be so that $F_k\leq n<F_{k+1}$. Then in $G_n$, the
vertex $F_k$ has only one neighbour, namely $F_{k-1}$.
\end{lemma}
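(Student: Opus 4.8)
The plan is to mirror the proof of Lemma \ref{le:lastvertex}, but applied to the specific vertex $F_k$ rather than to the top vertex $n$. Suppose $x \in [1,n]$ is a neighbour of $F_k$ in $G_n$, so that $F_k + x = F_i$ for some index $i$. First I would record that $k \geq 3$: since $n \geq 2$ and the interval $F_2 = 1 \leq n < F_3 = 2$ is impossible, the index $k$ determined by $F_k \leq n < F_{k+1}$ must satisfy $k \geq 3$. This guarantees that the Fibonacci numbers with indices near $k$ are strictly increasing and, in particular, that $F_{k-1} \neq F_k$, which matters for the boundary cases.

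The heart of the argument is a double-inequality squeeze on the sum $F_k + x$. For the lower bound, $F_i = F_k + x \geq F_k + 1 > F_k$, so by strict monotonicity $i \geq k+1$. For the upper bound, $F_i = F_k + x \leq F_k + n < F_k + F_{k+1} = F_{k+2}$, which forces $i \leq k+1$. Combining these gives $i = k+1$, and hence the only candidate neighbour is $x = F_{k+1} - F_k = F_{k-1}$.

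Finally I would verify that $F_{k-1}$ is a genuine neighbour: it is a legitimate vertex since $1 \leq F_{k-1} < F_k \leq n$, it is distinct from $F_k$ because $k \geq 3$, and $F_k + F_{k-1} = F_{k+1}$ is a Fibonacci number by the defining recurrence. Thus $F_{k-1}$ is the unique neighbour of $F_k$, as claimed.

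I do not anticipate any serious obstacle here; the whole proof reduces to the same two-sided bound used in Lemma \ref{le:lastvertex}, and the key fact that only $F_{k+1}$ lies strictly between $F_k$ and $F_{k+2}$. The only point requiring care is the small-index bookkeeping, namely ensuring $k \geq 3$ so that strict monotonicity applies and $F_{k-1} \neq F_k$; this cleanly disposes of the boundary cases where the Fibonacci sequence has not yet become strictly increasing.
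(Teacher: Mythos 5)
Your proof is correct. It differs from the paper's in organization rather than in underlying technique: the paper first disposes of the case $n=F_k$ by citing Lemma \ref{le:Fkonenbr}, then observes that neighbours of $F_k$ smaller than $F_k$ are already determined inside $G_{F_k}$ (again by Lemma \ref{le:Fkonenbr}), and finally runs a squeeze $F_{k+1}<F_k+v<F_{k+2}$ only for the potential \emph{larger} neighbours $v$ with $F_k<v\leq n$. You instead apply a single two-sided bound $F_k<F_k+x<F_{k+2}$ uniformly to every candidate $x\in[1,n]$, forcing the sum to be $F_{k+1}$ and hence $x=F_{k-1}$. Your version is self-contained (it does not invoke Lemma \ref{le:Fkonenbr} at all, effectively reproving it as the special case $n=F_k$) and avoids the case split; the paper's version buys a shorter write-up by reusing the already-stated pendant lemma. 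Your attention to the index bookkeeping ($k\geq 3$, so the relevant portion of the Fibonacci sequence is strictly increasing and $F_{k-1}\neq F_k$) is exactly the care needed at the boundary, and the final verification that $F_{k-1}$ is genuinely a vertex of $G_n$ adjacent to $F_k$ completes the argument.
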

\begin{proof}   If  $n=F_k$, then
 by Lemma \ref{le:Fkonenbr}, in $G_n$ the vertex $n$ is adjacent to only $F_{k-1}$.
(This also follows from the first case of Lemma \ref{le:lastvertex} 
since by Fact \ref{fa:halffib}, $n< \frac{F_{k+2}}{2}$).

So assume that $F_k < n < F_{k+1}$.
 In $G_{F_{k}}$, the vertex $F_k$ is adjacent to only $F_{k-1}$, and so in $G_n$, the
 vertex $F_{k}$ has only one neighbour smaller than $F_k$.
  To see that in $G_n$, vertex $F_k$ has no larger neighbours,  consider  some
  vertex $v$ satisfying $F_k<v\leq n$. Then
 \[F_{k+1}<F_k+F_k <\ F_k + v \  \leq F_k + n < F_{k} + F_{k+1} = F_{k+2}\]
 shows that  $F_k+v$ is not a  Fibonacci number, and so $v$ is not adjacent to $F_k$.
 Hence in $G_n$, $F_k$ has only one neighbour (namely $F_{k-1}$).
\end{proof}
 
 Calculating the degree of any vertex in a Fibonacci-sum graph
 is relatively straightforward.  To see the idea,
 consider the  adjacency matrix for $G_{18}$ given in Figure \ref{fi:adjG18}, 
 
 \begin{figure}[ht]\center
  \renewcommand{\arraystretch}{1.05}
    \renewcommand{\tabcolsep}{3pt}
 \begin{tabular}{r| p{10pt} |p{10pt}|p{10pt}|p{10pt}|p{10pt}|p{10pt}|p{10pt}|p{10pt}|p{10pt}|p{10pt}|p{10pt}|p{10pt}|p{10pt}|p{10pt}|p{10pt}|p{10pt}|p{10pt}|p{10pt}|}
 
 &$F_2$&$F_3$&$F_4$&&$F_5$&&&$F_6$&&&&&$F_7$&&&&\\ 
 &1&2&3&4&5&6&7&8&9&10&11&12&13&14&15&16&17&18\\ \hline
1    & 0 &1&&1&&&1&&&&&1&&&&&&\\ \hline
2    &1&&1&&&1&&&&&1&&&&&&&\\ \hline
 3   &&1&&&1&&&&&1&&&&&&&&1\\ \hline
 4   &1&&&0&&&&&1&&&&&&&&1 &\\ \hline
5    &&&1&&&&&1&&&&&&&&1&&\\ \hline 
6    &&1&&&&&1&&&&&&&&1&&&\\ \hline
7    &1&&&&&1&&&&&&&&1&&&&\\ \hline
8    &&&&&1&&&&&&&&1&&&&&\\ \hline
9    &&&&1&&&&&&&&1&&&&&&\\ \hline
10  &&&1&&&&&&&&1&&&&&&&\\ \hline
11  &&1&&&&&&&&1&&&&&&&&\\ \hline
12  &1&&&&&&&&1&&&&&&&&&\\ \hline
 13 &&&&&&&&1&&&&&&&&&&\\ \hline
 14 &&&&&&&1&&&&&&&&&&&\\ \hline
 15 &&&&&&1&&&&&&&&&&&&\\ \hline
16  &&&&&1&&&&&&&&&&&&&1\\ \hline
17  &&&&1&&&&&&&&&&&&&0&\\ \hline
18  &&&1&&&&&&&&&&&&&1&&\\ \hline
      \end{tabular}          
\caption{Adjacency matrix for $G_{18}$ (missing entries are 0s).}
\label{fi:adjG18}
\end{figure}
   \bigskip
   
 \begin{theorem}\label{th:degreeformula}
 Let $n\geq 1$ and let $x\in [1,n]$. 
 Let $k\geq 2$ satisfy $F_k\leq x <F_{k+1}$ and $\ell\geq k$ satisfy
 $F_\ell \leq x+n < F_{\ell+1}$.  Then the degree of $x$ in $G_n$
 is 
 \begin{equation}\deg_{G_n}(x)=\begin{cases}
                                  \ell -k &\mbox{if $2x$ is not a Fibonacci number};\\
                                  \ell -k -1&\mbox{if $2x$ is a Fibonacci number}.
                                  \end{cases}\label{eq:degreecount1}
 \end{equation}
 \end{theorem}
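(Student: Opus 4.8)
The plan is to count neighbours of $x$ directly. A vertex $y$ is adjacent to $x$ in $G_n$ precisely when $y\in[1,n]$, $y\neq x$, and $x+y=F_i$ for some Fibonacci number $F_i$. So I would first enumerate the admissible indices $i$, and then correct for the single forbidden case $y=x$.

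First I would pin down the range of $i$. Writing a candidate neighbour as $y=F_i-x$, the requirement $y\geq 1$ forces $F_i\geq x+1$; since $x<F_{k+1}$ gives $x+1\leq F_{k+1}$ while $F_k\leq x$ gives $F_k-x\leq 0$, the smallest admissible index is $i=k+1$. Symmetrically, $y\leq n$ forces $F_i\leq x+n$; since $F_\ell\leq x+n<F_{\ell+1}$, the largest admissible index is $i=\ell$. Conversely, for every $i$ with $k+1\leq i\leq\ell$ the value $y=F_i-x$ automatically lands in $[1,n]$, because $F_i\geq F_{k+1}\geq x+1$ gives $y\geq 1$ and $F_i\leq F_\ell\leq x+n$ gives $y\leq n$. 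Distinct indices give distinct $y$ (the Fibonacci numbers are strictly increasing for indices $\geq 2$, and here all indices are $\geq k+1\geq 3$). This produces exactly $\ell-k$ distinct values $y\in[1,n]$ with $x+y$ a Fibonacci number.

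Next I would remove the self-sum. The only candidate failing $y\neq x$ is the one with $x+y=2x$, which occurs iff $2x$ is a Fibonacci number, say $2x=F_m$. I then check that $m$ actually lies in the counted range $\{k+1,\ldots,\ell\}$: from $x\geq F_k$ and $2F_k\geq F_k+F_{k-1}=F_{k+1}$ one gets $F_m=2x\geq F_{k+1}$, so $m\geq k+1$; and from $x\leq n$ one gets $F_m=2x\leq x+n<F_{\ell+1}$, so $m\leq\ell$. Hence the self-sum is always among the $\ell-k$ candidates, and it must be discarded exactly when $2x$ is a Fibonacci number, which yields the two cases of \eqref{eq:degreecount1}.

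The inequalities bounding $i$ are routine Fibonacci-interval estimates of the same flavour as those in Lemma \ref{le:lastvertex}. The one step that needs genuine care is confirming that the self-sum index $m$ falls inside $\{k+1,\ldots,\ell\}$, so that the $-1$ correction is neither over- nor under-counted; this is the only place where the precise boundary behaviour matters, and I expect no real obstacle beyond this bookkeeping.
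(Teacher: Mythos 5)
Your proposal is correct and follows essentially the same route as the paper: count the Fibonacci numbers $F_i$ with $k+1\leq i\leq \ell$ that can be written as $x+s$ for $s\in[n]$, then subtract one exactly when the candidate $s=x$ (i.e.\ $2x=F_m$) occurs. You are in fact slightly more careful than the paper in two spots it leaves implicit --- verifying that every $F_i-x$ with $k+1\leq i\leq\ell$ really lands in $[1,n]$, and that the self-sum index $m$ always falls inside $\{k+1,\ldots,\ell\}$ --- but the underlying argument is the same.
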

 \begin{proof} For each $s\in [n]$, since $F_k< x+s\leq x+n<F_{\ell+1}$, 
 if $x+s$ is a Fibonacci number, then $k<\ell$ and $x+s \in \{F_{k+1}, \ldots, F_{\ell}\}$.
 Thus,
\begin{equation}\label{eq:degcount}
\deg_{G_n}(x)=|\{s\in [n]: s\neq x, x+s\in  \{F_{k+1}, \ldots, F_{\ell}\} \}|,
\end{equation}
and so  $\deg_{G_n}(x)\leq \ell-k$.
 However, when $s=x$,  if $2x$ is a Fibonacci number, since $G_n$ has no loops,
  the sum  $x+x$ fails to generate an edge  and so (\ref{eq:degcount})
  says $\deg_{G_n}(x)= \ell-k-1$.
 Otherwise, if $2x$ is not a Fibonacci number, then (\ref{eq:degcount}) 
 shows $\deg_{G_n}(x)=\ell-k$.
 \end{proof}
 
 The expression (\ref{eq:degreecount1}) can be made more specific:
 
 \begin{theorem}\label{th:degreeformula2}
 Let $n\geq 1$ and let $x\in [1,n]$. 
 Let $k\geq 2$ satisfy $F_k\leq x <F_{k+1}$ and $\ell\geq k$ satisfy
 $F_\ell \leq x+n < F_{\ell+1}$.  Then
 \begin{equation}\deg_{G_n}(x)=\begin{cases}
                      \ell -k -1&\mbox{if $x=1$ or $k\geq 4$ and $x=\frac{1}{2}F_{k+2}$};\\
                        \ell -k &\mbox{otherwise}.
                                  \end{cases}\label{eq:degreecount2}
 \end{equation}
 \end{theorem}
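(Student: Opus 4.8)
The plan is to reduce everything to Theorem~\ref{th:degreeformula}. That result already expresses $\deg_{G_n}(x)$ as $\ell-k$ or $\ell-k-1$ according to whether or not $2x$ is a Fibonacci number, so comparing it with (\ref{eq:degreecount2}) shows that the entire content of Theorem~\ref{th:degreeformula2} is the single equivalence
\[
2x\text{ is a Fibonacci number}\quad\Longleftrightarrow\quad x=1,\ \text{ or }\ k\geq 4\text{ and }x=\tfrac12 F_{k+2}.
\]
So I would simply prove this equivalence, under the standing hypothesis $F_k\leq x<F_{k+1}$ with $k\geq 2$.

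The key step is to locate which Fibonacci numbers can equal $2x$. Since $F_k\leq x<F_{k+1}$ is equivalent to $2F_k\leq 2x<2F_{k+1}$, I would identify the Fibonacci numbers lying in the half-open interval $[2F_k,\,2F_{k+1})$. First, Fact~\ref{fa:halffib} gives $2F_k<F_{k+2}<2F_{k+1}$, so $F_{k+2}$ always lies strictly inside this interval. I would then rule out every other Fibonacci number by elementary inequalities: since $F_{k+3}=2F_{k+1}+F_k>2F_{k+1}$, every $F_m$ with $m\geq k+3$ is too large; and since $F_{k+1}=F_k+F_{k-1}<2F_k$ whenever $k\geq 3$ while $F_m\leq F_k<2F_k$ for $m\leq k$, every $F_m$ with $m\leq k+1$ lies below $2F_k$. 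The sole exception is the boundary case $k=2$, where $F_{k+1}=F_3=2=2F_2=2F_k$ sits exactly at the left endpoint and is therefore included.

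Consequently the only Fibonacci numbers in $[2F_k,2F_{k+1})$ are $F_{k+2}$ in every case, together with $F_{k+1}$ precisely when $k=2$. To finish I would read off the two possibilities. The equation $2x=F_{k+2}$ forces $x=\tfrac12 F_{k+2}$, which is an integer exactly when $F_{k+2}$ is even; by Fact~\ref{fa:evenfibs} this requires $k+2\equiv 0\pmod 3$, i.e.\ $k\equiv 1\pmod 3$, and since $k\geq 2$ the smallest such $k$ is $4$. The equation $2x=F_{k+1}$ arises only when $k=2$, where $x\in[F_2,F_3)=\{1\}$ forces $x=1$ and $2x=F_3$. These two cases are disjoint (the second would otherwise need $F_{k+2}=2$, i.e.\ $k=1$), and together they are exactly the hypotheses of the first branch of (\ref{eq:degreecount2}); all remaining $x$ fall into the second branch.

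The only delicate point is the boundary behaviour at $k=2$: because there $2F_k$ coincides with the Fibonacci number $F_{k+1}$, the case $x=1$ must be split off from the generic condition $x=\tfrac12 F_{k+2}$, and overlooking this coincidence is the easiest way to misstate the result. Beyond that, the argument is a short finite check of Fibonacci inequalities, so I expect no real obstacle once the interval $[2F_k,2F_{k+1})$ is analysed carefully.
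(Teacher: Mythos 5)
Your proposal is correct and follows essentially the same route as the paper: both reduce the theorem to Theorem~\ref{th:degreeformula} and then characterize when $2x$ is a Fibonacci number by trapping $2x$ between $2F_k$ and $2F_{k+1}$, leaving only the candidates $2x=F_{k+1}$ (which forces $k=2$ and $x=1$) and $2x=F_{k+2}$ (which forces $k\geq 4$ via the parity of $F_{k+2}$ and the check that $k=3$ gives $x=2$ with $2x=4$ not Fibonacci). No gaps; your explicit handling of the boundary coincidence $F_3=2F_2$ at $k=2$ matches the paper's Case 1.
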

\begin{proof}
Assume that $2x$ is a Fibonacci number.
  Then $2x<2(F_{k+1})<F_{k+3}$, 
 and so either $2x=F_{k+1}$ or $2x=F_{k+2}$.
 
 Case 1: $2x=F_{k+1}$: 
 Then $2F_k\leq 2x=F_{k+1}=F_k+F_{k-1}$ implies 
 $F_k\leq F_{k-1}$, and so $k=2$, $x=s=1$ and $2x=2=F_3=F_{k+1}$.
   One value of
 $s$ in (\ref{eq:degcount}) is lost, and so $\deg_{G_n}(1)=\ell-k-1$.
 
 Case 2: Assume that $k\geq 3$, and let
  $2x=F_{k+2}$. Then $F_{k+2}=x+x\leq x+n<F{\ell+1}$,
 and so $k+2\leq \ell$. Thus 
  one value for $s$ in (\ref{eq:degcount}) is lost and again
  $\deg_{G_n}(\frac{1}{2}F_{k+2})=k-\ell-1$. In fact, when
  $k=3$, $x=2$, and in this case, $2x$ is not a Fibonacci number,
  so one may assume that $k\geq 4$ (and $x\geq 4$).
  
  In all other cases, $2x$ is not a Fibonacci number, so by  (\ref{eq:degcount}),
   $\deg_{G_n}(x)=\ell-k$. Hence  (\ref{eq:degreecount2}) is verified.
  \end{proof}
 
 Examples of degree sequences in $G_n$ are found in the diagrams used in
 the proof of Theorem \ref{th:Automorphism}.
The following  consequence of Theorem \ref{th:degreeformula2}
can be verified (for small $n$) in the adjacency matrix for $G_{18}$ given in
Figure \ref{fi:adjG18}.
\begin{corollary}
For any $n \geq 2$, vertex $2$ has maximum degree in $G_n$.
If $n+1$ is a Fibonacci number, then $\deg_{G_n}(1)=\deg_{G_n}(2)-1$;
otherwise, $\deg_{G_n}(1)=\deg_{G_n}(2)$.
\end{corollary}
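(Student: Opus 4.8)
The plan is to derive both assertions from the degree formula of Theorem~\ref{th:degreeformula2}, using the observation that the neighbours of a vertex $x$ correspond exactly to the Fibonacci numbers lying in the window $(x,\,x+n]$. Concretely, rewriting (\ref{eq:degcount}) gives
\[
\deg_{G_n}(x)=\bigl|\{F_i:\ x<F_i\le x+n\}\bigr|-\varepsilon_x,
\]
where $\varepsilon_x=1$ if $2x$ is a Fibonacci number and $\varepsilon_x=0$ otherwise. Since $2\cdot 2=4$ is not a Fibonacci number, $\deg_{G_n}(2)$ is precisely the number of Fibonacci numbers in $(2,\,n+2]$, namely $F_4,F_5,\ldots,F_{\ell_2}$, where $F_{\ell_2}\le n+2<F_{\ell_2+1}$.

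The core of the maximum-degree claim is the window estimate: for every $x\ge 2$, the interval $(x,\,x+n]$ contains no more Fibonacci numbers than $(2,\,n+2]$. I would prove this by a counting argument. Suppose $(x,\,x+n]$ contains $r\ge 1$ Fibonacci numbers; being consecutive, they are $F_a<F_{a+1}<\cdots<F_{a+r-1}$, and $F_a>x\ge 2$ forces $a\ge 4$. From $F_a>x$ and $F_{a+r-1}\le x+n$ we obtain $F_{a+r-1}-F_a<n$, and since the gap $F_{m+r-1}-F_m$ is nondecreasing in $m$, the choice $m=a\ge 4$ yields $F_{r+3}-F_4\le F_{a+r-1}-F_a<n$, hence $F_{r+3}\le n+2$. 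Thus $F_4,\ldots,F_{r+3}$ are $r$ Fibonacci numbers inside $(2,\,n+2]$, which proves the estimate. Combined with $\varepsilon_x\ge 0$, this gives $\deg_{G_n}(x)\le\deg_{G_n}(2)$ for all $x\ge 2$, while the remaining vertex $x=1$ is covered by the second assertion; hence vertex $2$ has maximum degree.

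For the comparison of $\deg_{G_n}(1)$ and $\deg_{G_n}(2)$, I would match the two neighbourhoods term by term. A neighbour of $1$ has the form $F_i-1$ with $F_i\le n+1$, and a neighbour of $2$ has the form $F_i-2$ with $F_i\le n+2$; the self-loop $1+1=2=F_3$ removes exactly one term at the bottom of the first list, so that after this correction both neighbourhoods are indexed by the same starting value $i=4$. The correspondence $F_i-1\leftrightarrow F_i-2$ then pairs the two lists against each other, so they can differ only through a single Fibonacci number at the top of the admissible range. I would isolate this boundary term and determine exactly when it yields an extra neighbour for one vertex but not the other: this is precisely the situation identified in the statement, in which $n+1$ is a Fibonacci number, and in every other case the two degrees coincide.

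The step I expect to be the main obstacle is this boundary bookkeeping. One must track, for the largest relevant Fibonacci number near $n$, whether it contributes a neighbour to vertex~$1$, to vertex~$2$, or to neither, and reconcile this with the self-loop correction at the bottom of vertex~$1$'s list; a small shift in the index there is what separates the equal-degree case from the case where the degrees differ by one. Once this single boundary comparison is pinned down, the remainder is the routine Fibonacci-window count already controlled by the monotonicity of gaps used for the maximum-degree claim.
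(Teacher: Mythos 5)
Your first half is solid and, in fact, more than the paper itself provides: the window formula $\deg_{G_n}(x)=\left|\{F_i:\ x<F_i\le x+n\}\right|-\varepsilon_x$ is correct, and your monotone-gap count (using that $F_{m+r-1}-F_m$ is nondecreasing in $m$, since the difference of consecutive gaps is $F_{m+r-2}-F_{m-1}\ge 0$) is a complete, self-contained proof that no window $(x,\,x+n]$ with $x\ge 2$ beats $(2,\,n+2]$, hence that vertex $2$ attains the maximum degree. The paper states the corollary without proof, as an unverified consequence of Theorem~\ref{th:degreeformula2}, so this part of your argument is a genuinely independent route and is fine as written.

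The gap is in the second half, and it is not dismissible as bookkeeping: the ``boundary term'' you defer is the entire content of the dichotomy, and carrying it out from your own setup does \emph{not} yield the condition you announce. After the self-loop correction, your two lists give exactly
\[
\deg_{G_n}(1)=\#\{i\ge 4:\ F_i\le n+1\},
\qquad
\deg_{G_n}(2)=\#\{i\ge 4:\ F_i\le n+2\},
\]
so the degrees differ (by exactly $1$) if and only if some Fibonacci number equals $n+2$ --- not $n+1$. Concretely, for $n=12$ one has $n+1=13=F_7$, yet vertex $1$ has neighbours $\{2,4,7,12\}$ and vertex $2$ has neighbours $\{1,3,6,11\}$, so $\deg(1)=\deg(2)=4$; conversely, for $n=19$, $n+1=20$ is not a Fibonacci number, yet $\deg(1)=4=\deg(2)-1$ because $n+2=21=F_8$. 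So the step you flagged as the ``main obstacle'' cannot be completed as announced: by trusting the printed statement you imported an off-by-one that your own formulas contradict. Indeed, the corollary as stated in the paper is itself misstated --- the condition should read ``$n+2$ is a Fibonacci number'' (equivalently $n=F_m-2$ for some $m\ge 5$), as the same one-line comparison via Theorem~\ref{th:degreeformula} (with $k=2$, penalty $1$ for $x=1$, versus $k=3$ for $x=2$) confirms. Had you pinned down the boundary term instead of deferring it, your argument would have been complete and would have exposed the discrepancy rather than echoed it.
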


Using the degree sequence given in either  Theorem \ref{th:degreeformula}
or \ref{th:degreeformula2}, the number of edges in $G_n$ can be computed;
the proof of the following is left to the reader:
\begin{corollary}
Let $n\geq 1$ and $k\geq 2$ be integers satisfying $F_k\leq n <F_{k+1}$.
Then 
  \begin{equation}\label{eq:numberofedges}
|E({G_n})|=\begin{cases}
n+\frac{F_k+1}{2}-\frac{\left\lfloor\frac{4(k+1)}{3}\right\rfloor}{2}
          & \mbox{if } n\leq \frac{F_{k+2}}{2};\\
2n+\frac{F_k+1}{2}-\frac{\left\lfloor\frac{4(k+1)}{3}\right\rfloor}{2}
                       -\left\lceil\frac{F_{k+2}-1}{2}\right\rceil
          & \mbox{if }n>\frac{F_{k+2}}{2}.
	\end{cases}
	\end{equation}
\end{corollary}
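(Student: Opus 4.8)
The plan is to count edges by grouping them according to the Fibonacci number to which their two endpoints sum, which is equivalent (via the handshake lemma) to summing the degrees from Theorem~\ref{th:degreeformula2}. For a Fibonacci number $F_t$, write $e_t$ for the number of edges $\{i,j\}$ with $i<j$, $i,j\in[n]$, and $i+j=F_t$. Since the endpoints are distinct positive integers at most $n$, such a pair exists only when $3=F_4\le F_t\le 2n-1$, and because $F_k\le n<F_{k+1}$ forces $2n-1<2F_{k+1}<F_{k+1}+F_{k+2}=F_{k+3}$, the only contributing sums are $F_4,F_5,\ldots,F_{k+2}$. First I would record that $i$ must satisfy $\max(1,F_t-n)\le i\le\lfloor(F_t-1)/2\rfloor$ (the upper bound from $i<j$, the lower from $j=F_t-i\le n$), so that $e_t=\max\bigl(0,\lfloor(F_t-1)/2\rfloor-\max(1,F_t-n)+1\bigr)$.

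Next I would split on whether $F_t\le n$. For $4\le t\le k$ the lower bound is $1$, giving $e_t=\lfloor(F_t-1)/2\rfloor$. For the two sums exceeding $n$, namely $t=k+1$ and $t=k+2$, the lower bound is $F_t-n$, and a short computation using $\lfloor a/2\rfloor+\lceil a/2\rceil=a$ yields $e_t=\lfloor(F_t-1)/2\rfloor-(F_t-n)+1=n-\lceil(F_t-1)/2\rceil$. Here one must check that $F_{k+1}$ always contributes (it does, since $F_{k+1}-n\le F_{k-1}\le\lfloor(F_{k+1}-1)/2\rfloor$ for $k\ge 3$), whereas $F_{k+2}$ contributes precisely when $F_{k+2}\le 2n-1$, i.e.\ when $n>\tfrac{F_{k+2}}{2}$, which is exactly the case distinction in the statement. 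Summing then gives $|E(G_n)|=\sum_{t=4}^{k}\lfloor(F_t-1)/2\rfloor+\bigl(n-\lceil(F_{k+1}-1)/2\rceil\bigr)$ in the first case, with the extra term $n-\lceil(F_{k+2}-1)/2\rceil$ added in the second; the latter already matches the $-\lceil\frac{F_{k+2}-1}{2}\rceil$ appearing in~(\ref{eq:numberofedges}).

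It then remains to evaluate the common part, i.e.\ to prove the identity
\[\sum_{t=4}^{k}\left\lfloor\frac{F_t-1}{2}\right\rfloor-\left\lceil\frac{F_{k+1}-1}{2}\right\rceil=\frac{F_k+1}{2}-\frac12\left\lfloor\frac{4(k+1)}{3}\right\rfloor.\]
To do so I would use Fact~\ref{fa:evenfibs} (that $F_t$ is even exactly when $3\mid t$) to write $\lfloor(F_t-1)/2\rfloor=\tfrac12(F_t-1-\epsilon_t)$, where $\epsilon_t=1$ if $3\mid t$ and $\epsilon_t=0$ otherwise, and similarly $\lceil(F_{k+1}-1)/2\rceil=\tfrac12(F_{k+1}-1+\epsilon_{k+1})$. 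Applying the standard partial-sum identity $\sum_{t=1}^{k}F_t=F_{k+2}-1$ and counting the multiples of $3$ in $[4,k]$ (there are $\lfloor k/3\rfloor-1$ of them) collapses the left-hand side to $\tfrac12\bigl(F_k-k-\lfloor k/3\rfloor-\epsilon_{k+1}\bigr)$.

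The final and most delicate step is reconciling this with the right-hand side. Using $\lfloor 4(k+1)/3\rfloor=(k+1)+\lfloor(k+1)/3\rfloor$, the required equality reduces to $\lfloor(k+1)/3\rfloor-\lfloor k/3\rfloor=\epsilon_{k+1}$, which holds because $\lfloor m/3\rfloor$ increases by one exactly when $m$ is a multiple of $3$. This handling of the interacting floor and ceiling functions---in particular matching the $\lfloor 4(k+1)/3\rfloor$ term against the parity of the Fibonacci numbers---is where I expect the main bookkeeping difficulty to lie; the remaining manipulations are routine, and the small cases (e.g.\ $n\le 2$) can be verified directly against the formula.
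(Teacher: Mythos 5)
Your proof is correct. The paper gives no argument for this corollary (it is ``left to the reader''), only the hint that one should sum the degree formula of Theorem \ref{th:degreeformula2} over all vertices and halve; you instead partition $E(G_n)$ according to the Fibonacci number $F_t$ that the two endpoints sum to, with $t$ ranging over $4,\ldots,k+2$. The two routes are dual double counts, but yours is the cleaner one to execute: summing degrees requires tracking, for each vertex $x$, the index $\ell$ with $F_\ell\le x+n<F_{\ell+1}$ (which changes as $x$ crosses the thresholds $F_t-n$) plus the exceptional vertices $1$ and $\tfrac{1}{2}F_{k+2}$, whereas your stratification produces exactly one term per Fibonacci index, namely $e_t=\lfloor(F_t-1)/2\rfloor$ for $t\le k$ and $e_t=n-\lceil(F_t-1)/2\rceil$ for $t\in\{k+1,k+2\}$, with the $t=k+2$ term present precisely when $n>\tfrac{F_{k+2}}{2}$ --- which makes the case split in (\ref{eq:numberofedges}) transparent. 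All the individual steps check out: the existence of the $F_{k+1}$ level follows from $F_{k+1}-1\ge 2F_{k-1}$ (valid for $k\ge 3$, with $k=2$ checked by hand), the telescoping via $\sum_{t\le k}F_t=F_{k+2}-1$ and Fact \ref{fa:evenfibs} gives $\tfrac{1}{2}(F_k-k-\lfloor k/3\rfloor-\epsilon_{k+1})$, and the final reconciliation $\lfloor 4(k+1)/3\rfloor=(k+1)+\lfloor(k+1)/3\rfloor$ together with $\lfloor(k+1)/3\rfloor-\lfloor k/3\rfloor=\epsilon_{k+1}$ closes the identity (verified, e.g., at $n=18$, $k=7$, giving $21$ edges as listed in the paper).
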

For $n=1,2,\ldots, 21$, the values of $|E(G_n)|$ are 
0, 1, 2, 3, 4, 5, 7, 8, 9, 10, 12, 14, 15, 16, 17, 18, 19, 21, 23, 25, and 26.

Lemma \ref{le:pendants} says that every $G_n$ has at least one pendant vertex
(and so $G_n$ does not have a Hamiltonian cycle). 
 In general, how many pendants can $G_n$ have?
The following observation can be proved directly or by applying Theorems
\ref{th:degreeformula} and \ref{th:degreeformula2}.
\begin{theorem}\label{th:manypendants}
Let $k\geq 3$ and  $n$ satisfy $F_k\leq  n <F_{k+1}$.
If $n<\frac{F_{k+2}}{2}$, then $F_k,F_k+1, \ldots, n$ are the vertices of degree 1.
If $n\geq \frac{F_{k+2}}{2}$, then $F_k, F_k+1, \ldots, F_{k+2}-n-1$ are 
 vertices of degree 1.
The only other possible degree 1 vertices are 
\[\begin{cases}
\frac{F_k}{2} & \mbox{ if } k\equiv 0\pmod{3} \mbox{ and } n<F_{k+1}-\frac{F_k}{2};\\
\frac{F_{k+1}}{2} & \mbox{ if } k\equiv 1\pmod{3};\\
\frac{F_{k+2}}{2} & \mbox{ if } k\equiv 2\pmod{3} \mbox{ and } n\geq \frac{F_{k+2}}{2}.
\end{cases}\]
\end{theorem}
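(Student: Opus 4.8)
The plan is to read the pendants directly off the degree formula in Theorem~\ref{th:degreeformula2}. For a vertex $x$, let $j$ and $\ell$ be the indices with $F_j\le x<F_{j+1}$ and $F_\ell\le x+n<F_{\ell+1}$. That theorem gives $\deg_{G_n}(x)=\ell-j$ unless $x=1$ or ($j\ge 4$ and $x=\tfrac12 F_{j+2}$) --- equivalently, unless $2x$ is a Fibonacci number --- in which case the degree is $\ell-j-1$. Hence $\deg_{G_n}(x)=1$ in exactly two situations: (i) $2x$ is not a Fibonacci number and $\ell=j+1$; or (ii) $2x$ is a Fibonacci number and $\ell=j+2$. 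I would treat these two families separately: family~(i) produces the displayed block $F_k,F_k+1,\ldots$, and family~(ii) produces the single exceptional pendant.

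First I would pin down family~(i). If $j\le k-1$ then $F_{j+1}\le F_k\le n$, so $x+n\ge F_j+n\ge F_j+F_{j+1}=F_{j+2}$, forcing $\ell\ge j+2$; thus every generic pendant has $j=k$, i.e.\ lies in $[F_k,n]$. For such $x$ one has $\ell=k+1$ exactly when $x+n<F_{k+2}$, that is $x<F_{k+2}-n$, and $\ell=k+2$ otherwise. The only $x\in[F_k,n]$ with $2x$ a Fibonacci number is $x=\tfrac12 F_{k+2}$ (since $2x\in[2F_k,2F_{k+1})$ contains only $F_{k+2}$), and that value satisfies $\tfrac12 F_{k+2}\ge F_{k+2}-n$ whenever it is a vertex, so it never lies in the candidate block. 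Therefore the family-(i) pendants are precisely $\{x:F_k\le x\le n,\ x<F_{k+2}-n\}$, and splitting on whether $n<\tfrac12 F_{k+2}$ (block $F_k,\ldots,n$) or $n\ge\tfrac12 F_{k+2}$ (block $F_k,\ldots,F_{k+2}-n-1$) yields the two stated cases.

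Next I would determine family~(ii). The vertex $x=1$ has $\deg_{G_n}(1)=\ell-3$ with $F_\ell\le n+1$, so it is a pendant only for the smallest $n$, which I would dispose of by hand. The remaining special vertices are $x=\tfrac12 F_{j+2}$ with $j\ge 4$, which exist only when $F_{j+2}$ is even, i.e.\ $j\equiv 1\pmod 3$ by Fact~\ref{fa:evenfibs}. Such an $x$ lies strictly between $F_j$ and $F_{j+1}$ by Fact~\ref{fa:halffib}, so its index is indeed $j$; it is a vertex of $G_n$ iff $\tfrac12 F_{j+2}\le n$, and by~(ii) it is a pendant iff $\ell=j+2$, i.e.\ iff $F_{j+2}\le \tfrac12 F_{j+2}+n<F_{j+3}$. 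Using $F_{j+3}-\tfrac12 F_{j+2}=\tfrac12 F_{j+2}+F_{j+1}$ this reduces to the window
\[\tfrac12 F_{j+2}\ \le\ n\ <\ \tfrac12 F_{j+2}+F_{j+1}.\]
A short computation shows that for $n\in[F_k,F_{k+1})$ this window can only be met by $j\in\{k-2,k-1,k\}$, of which exactly one is $\equiv 1\pmod 3$ according to the residue of $k$. Evaluating the window for that single admissible $j$ gives the three listed conditions: for $j=k-2$ the upper endpoint is $F_{k+1}-\tfrac12 F_k$, for $j=k$ the lower endpoint is $\tfrac12 F_{k+2}$, and for $j=k-1$ the window contains all of $[F_k,F_{k+1})$.

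The main obstacle is the bookkeeping in the family-(ii) step: identifying, for each residue class of $k\bmod 3$, the unique $j\in\{k-2,k-1,k\}$ with $j\equiv 1\pmod 3$, and then simplifying its window into a clean inequality on $n$ via the Fibonacci recurrence and Fact~\ref{fa:halffib}. Care is also needed to confirm that no vertex of the family-(i) block secretly has $2x$ a Fibonacci number (which would force degree $0$, impossible in the connected graph $G_n$), and to handle the genuinely small cases ($x=1$, and $k=3,4$) directly rather than through the generic formula.
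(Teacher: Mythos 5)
Your proposal is correct and follows precisely the route the paper itself indicates: the paper gives no written proof of this theorem, saying only that it ``can be proved directly or by applying Theorems \ref{th:degreeformula} and \ref{th:degreeformula2},'' and your dichotomy --- pendants with $\ell=j+1$ and $2x$ not Fibonacci versus pendants with $\ell=j+2$ and $2x=F_{j+2}$ --- is the natural way to carry that out. The individual steps all check: the reduction to $j=k$ for the generic block, the exclusion of $\tfrac12 F_{k+2}$ from that block, the window $\tfrac12 F_{j+2}\le n<\tfrac12 F_{j+2}+F_{j+1}$ for the exceptional pendant, and the restriction to $j\in\{k-2,k-1,k\}$.

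Two remarks, both to your credit. First, your bookkeeping produces residue conditions that differ from the theorem as printed: $j=k-1\equiv1\pmod 3$ forces $k\equiv2\pmod 3$ (giving $\tfrac12 F_{k+1}$ with no further condition on $n$), and $j=k\equiv1\pmod 3$ forces $k\equiv1\pmod 3$ (giving $\tfrac12 F_{k+2}$ when $n\ge\tfrac12 F_{k+2}$); the paper's second and third cases have these two residues interchanged. Your version is the correct one --- note that $F_{k+1}$ is odd when $k\equiv1\pmod 3$, so the printed second case is vacuous, and in $G_6$ (where $k=5\equiv2$) the vertex $4=\tfrac12 F_6$ is a pendant that the printed cases miss. (The paper's own example claiming $17$ is a pendant of $G_{51}$ is likewise inconsistent with both the printed condition and the truth: $17$ has neighbours $4$ and $38$ there.) Second, your observation that $x=1$ is a pendant exactly when $n\in\{2,3\}$ identifies a genuine boundary case ($k=3,4$) that the statement silently omits; disposing of it by hand, as you propose, is the right move.
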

 In classifying automorphisms (see Theorem \ref{th:Automorphism} for 
 diagrams), Theorem \ref{th:manypendants} is useful. 
 For example, in $G_{51}$, using $k=9$ (and $F_9=34$) in Theorem
 \ref{th:manypendants}, $F_{k+2}-n-1=89-51-1=37$, so  
 that vertices 34, 35, 36, 37 and 17  are pendants.

\begin{corollary}
For any $p\geq 1$, there exists an $n$ so that $G_n$ has exactly $p$ pendants.
\end{corollary}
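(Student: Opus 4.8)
The plan is to produce exactly $p$ pendants as a single contiguous block of large-labelled vertices, exploiting the first case of Theorem~\ref{th:manypendants} and choosing the index $k$ so that the extra pendant it permits is suppressed. Recall that when $F_k\le n<\frac{F_{k+2}}{2}$, Theorem~\ref{th:manypendants} identifies the degree-$1$ vertices as $F_k,F_k+1,\ldots,n$ together with at most one further vertex determined by $k\bmod 3$. Keeping $n$ strictly below $\frac{F_{k+2}}{2}$ and taking $k\equiv 1\pmod 3$ removes this further vertex: the only additional degree-$1$ candidate that Theorem~\ref{th:manypendants} allows in this residue class does not arise while $n<\frac{F_{k+2}}{2}$. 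Thus for such $k$ and $n$ the pendants of $G_n$ are exactly $F_k,F_k+1,\ldots,n$, a block of $n-F_k+1$ vertices.

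Given $p\ge 1$, I would then select $k\equiv 1\pmod 3$ with $k$ large enough that $F_{k-1}>2(p-1)$; since $F_{k-1}\to\infty$ along this residue class, such $k$ exist (and taking $k\ge 7$ in addition is harmless). Put $n=F_k+p-1$. The lower bound $n\ge F_k$ is clear from $p\ge 1$, while $n<\frac{F_{k+2}}{2}$ follows because $\frac{F_{k+2}}{2}-F_k=\tfrac12(F_{k+1}-F_k)=\tfrac12 F_{k-1}$, so the inequality is equivalent to $2(p-1)<F_{k-1}$, which holds by the choice of $k$. Hence $F_k\le n<\frac{F_{k+2}}{2}$, and by the previous paragraph $G_n$ has pendants exactly $F_k,F_k+1,\ldots,F_k+p-1$, that is, precisely $p$ of them. (When $p=1$ this recovers the single pendant $F_k$ of Lemma~\ref{le:pendants}.)

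The step requiring the most care is the suppression of the auxiliary pendant in Theorem~\ref{th:manypendants}: one must verify that, in the range $n<\frac{F_{k+2}}{2}$ with $k\equiv 1\pmod 3$, no vertex other than the block $F_k,\ldots,n$ has degree $1$. A minor additional point is to keep $k$ large enough that the small-$n$ exception, in which the endpoint vertex $1$ itself becomes a pendant (this happens only for $n\le 3$), is avoided; the growth condition $F_{k-1}>2(p-1)$ together with $k\ge 7$ already guarantees $n\ge 13$, so this cannot interfere. Once these checks are in place, the count $n-F_k+1=p$ is immediate.
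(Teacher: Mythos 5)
Your construction $n=F_k+p-1$ with $k\equiv 1\pmod 3$ and $F_{k-1}>2(p-1)$ is essentially identical to the paper's proof, which simply hardcodes $k=6p+1$ (a choice that automatically satisfies your growth condition). One caveat worth flagging: as printed, Theorem~\ref{th:manypendants} attaches the unconditional auxiliary pendant $\frac{1}{2}F_{k+1}$ to the residue $k\equiv 1\pmod 3$, which would defeat your suppression step; however, the residues in its last two cases are evidently transposed (only $F_{k+2}$ is even when $k\equiv 1\pmod 3$, and the candidate $\frac{1}{2}F_{k+2}$ is a pendant only when $n\geq \frac{1}{2}F_{k+2}$), so under the corrected reading your argument goes through exactly as the paper's does.
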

\begin{proof}
For $p\geq 1$, let $n=F_{6p+1}+p-1$. Then, the only degree 1 
vertices in $G_n$  are  $F_{6p+1},F_{6p+1}+1, \ldots,F_{6p+1}+p-1$.
\end{proof}

For example, to find  $n$ so that $G_n$ has exactly $p=3$ pendants,
 let $n=F_{19}+(3-1)=4181+2=4183$.

\section{$G_n$ is bipartite}\label{se:bipartite}
As mentioned in the introduction, it is known (see \cite{Silv:77}, \cite{AEH:78}, or
the survey \cite{Cost:08})
 that $G_n$ is bipartite, but for completeness, the short proof is given here.
\begin{theorem} \label{th:bipartite}
For $n \geq 1$, $G_n$ is bipartite.
\end{theorem}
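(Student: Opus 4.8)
The plan is to exhibit an explicit 2-coloring of the vertices $[n]$ under which no edge is monochromatic. Since edges are pairs $\{i,j\}$ with $i+j=F_m$ for some $m$, the natural thing to try is a coloring based on the residue or parity of some arithmetic data attached to each vertex. The cleanest approach uses Zeckendorf representations: every positive integer has a unique representation as a sum of non-consecutive Fibonacci numbers, and I would color vertex $i$ by the parity of the index of the \emph{smallest} Fibonacci number appearing in the Zeckendorf representation of $i$. Alternatively, and more in the spirit of the elementary facts already assembled (Facts \ref{fa:evenfibs}--\ref{fa:twoFssumtoF}), one can attempt a coloring based directly on which ``interval'' $[F_k,F_{k+1})$ the vertex lies in, combined with a parity correction.

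Concretely, I would first assign to each vertex $i$ the unique index $k=k(i)$ with $F_k\le i<F_{k+1}$, and then define the color class by the parity of $k(i)$ (possibly with a small modification for the finitely many low vertices). The key computation is to check that if $i+j=F_m$ is a Fibonacci number with $i<j$, then $k(i)$ and $k(j)$ have opposite parities. For this I would use Fact \ref{fa:halffib}, $F_{k}<\tfrac{1}{2}F_{k+2}<F_{k+1}$, to locate the larger summand $j$: since $i<j$ and $i+j=F_m$, one has $j>\tfrac{1}{2}F_m$, which by Fact \ref{fa:halffib} forces $j\ge F_{m-1}$, hence $k(j)=m-1$. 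Then $i=F_m-j\le F_m-F_{m-1}=F_{m-2}$, and a short estimate pins down $k(i)$ as $m-2$ (or otherwise controls it), so that $k(j)-k(i)$ is odd and the two endpoints receive different colors.

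The main obstacle will be the boundary and small-index cases, where the interval containing $i$ is not determined by the single inequality above — for example when $j$ is exactly a Fibonacci number, or when $i$ is very small so that several early intervals collapse. I expect to handle these by checking the first few values of $n$ (or the first few vertices) by hand against the adjacency pattern visible in Figure \ref{fi:adjG18}, and to confirm that the parity-of-interval coloring, once any initial-segment correction is made, remains consistent as $n$ grows, since adding a new vertex $n+1$ only adds edges to strictly smaller vertices already colored. An alternative that sidesteps the case analysis entirely is to invoke Fact \ref{fa:twoFssumtoF} together with the Zeckendorf coloring: if the smallest Fibonacci indices of $i$ and $j$ had the same parity, summing to a single Fibonacci number would force a forbidden consecutive-index collision, contradicting uniqueness of the representation. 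Either route yields a proper 2-coloring, establishing that $G_n$ is bipartite.
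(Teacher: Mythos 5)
There is a genuine gap: neither of your proposed colorings is actually a proper $2$-colouring, and the ``key computation'' contains an incorrect inference. From $i<j$ and $i+j=F_m$ you get $j>\tfrac{1}{2}F_m$, and Fact~\ref{fa:halffib} gives $F_{m-2}<\tfrac{1}{2}F_m<F_{m-1}$, so this only forces $j>F_{m-2}$, \emph{not} $j\ge F_{m-1}$; the larger summand can land in $[F_{m-2},F_{m-1})$ together with the smaller one. Concretely, the interval-parity colouring fails on the edge $\{6,7\}$: $6+7=13=F_7$ while $6$ and $7$ both lie in $[F_5,F_6)=[5,8)$, so $k(6)=k(7)=5$ and the edge is monochromatic (and indeed $6$ and $7$ are in opposite classes in Figure~\ref{fi:G20bipartite}). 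This is not a ``finitely many low vertices'' issue --- for every $m$ the pairs $\{i,F_m-i\}$ with $\tfrac{1}{2}F_m<F_m-i<F_{m-1}$ produce such monochromatic edges. The Zeckendorf alternative fails as well: $1=F_2$ and $4=F_4+F_2$ both have smallest index $2$, yet $1+4=5=F_5$ is an edge; the claimed ``forbidden consecutive-index collision'' does not materialize because the sum of two Zeckendorf representations is not itself a Zeckendorf representation (here $F_4+2F_2=F_4+F_3=F_5$ collapses with no contradiction to uniqueness). The true bipartition (Lemmas~\ref{le:2colourFSgraph} and~\ref{le:recursioncolour}) is close to ``parity of the smallest Zeckendorf index'' but has genuine exceptions exactly at the half-Fibonacci values $\tfrac{1}{2}F_{3m}$, e.g.\ $c(4)\ne c(1)$ even though both representations end in $F_2$, so no correction confined to an initial segment can rescue either colouring.

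The paper avoids explicit colouring altogether and argues by induction on $n$ using Lemma~\ref{le:lastvertex}: the new vertex $m+1$ has at most two neighbours, and when it has two, namely $F_{\ell+1}-(m+1)$ and $F_{\ell+2}-(m+1)$, these share the common neighbour $m+1-F_\ell$ in $G_m$, hence lie in the same class of the (inductively given) bipartition, so $m+1$ can be placed in the opposite class. If you want to salvage an explicit-colouring proof, you would need to reproduce the recursion of Lemma~\ref{le:recursioncolour} (strip the leading Zeckendorf summand and handle the $\tfrac{1}{2}F_{3m}$ exceptions separately), which is substantially more delicate than either rule you propose.
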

\begin{proof} The proof is by induction on $n$.  

\noindent{\sc Base step:} For $n\leq 6$, $G_n$ is a tree
(see Figure \ref{fi:G6}) and so is bipartite.

\noindent{\sc Induction step:}
Let $m\geq 6$ and assume that  $G_m$ is bipartite.
 It remains to show that $G_{m+1}$ is bipartite.
 Let $\ell$ be the positive integer such that $F_{\ell} \leq m+1 < F_{\ell+1}$.

Case 1: If  $m+1 \leq \frac{F_{\ell+2}}{2}$, by Lemma \ref{le:lastvertex}, in $G_{m+1}$,
the vertex $m+1$ is adjacent to (only) the vertex $F_{\ell+1}-(m+1)$.
 Since $G_m$ is bipartite, the addition of the single edge to create $G_{m+1}$
 is still bipartite, concluding the inductive step in this case.

Case 2: If If  $m+1 \geq \frac{F_{\ell+2}}{2}$, by Lemma \ref{le:lastvertex},
in $G_{m+1}$, the vertex $m+1$ is adjacent to (only) the two vertices
 $v_1 = F_{\ell+1} - (m+1)$ and $v_2 = F_{\ell+2} - (m+1)$.
By Fact \ref{fa:halffib}, $m+1>F_\ell$, and so the vertex $v=m+1-F_\ell$
is adjacent to both $v_1$ and $v_2$ in $G_m$. By the induction hypothesis,
$G_m$ is bipartite, so $v_1$ and $v_2$ are in the same bipartite class in $G_m$,
in which case the vertex $m+1$ can be added to the opposite class,
showing that $G_{m+1}$ is bipartite. This concludes Case 2 of  the inductive step, 
and hence the proof.
\end{proof}

As an example, Figure \ref{fi:G20bipartite} shows the bipartition of $G_{20}$.

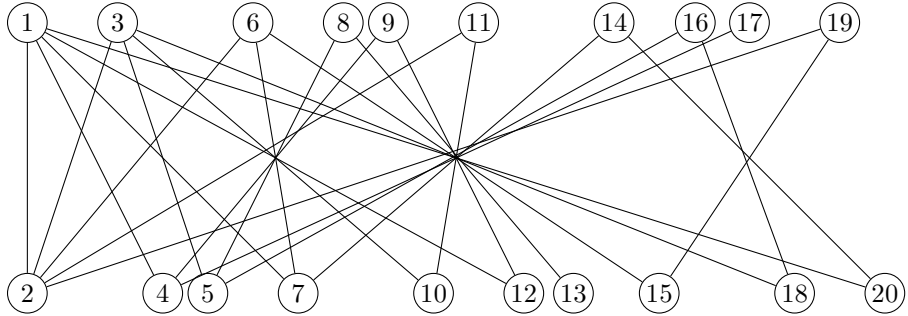
\begin{figure}[h]\centering
\begin{tikzpicture}[scale =0.6]
	\tikzstyle{vertex}=[circle, draw=black,  minimum size=15pt,inner sep=1pt]
	
	\node[vertex] (one) at (1,6){$1$};
	\node[vertex] (three) at (3,6) {$3$};
	\node[vertex] (six) at (6,6) {$6$};
	\node[vertex] (eight) at (8,6) {$8$};
	\node[vertex] (nine) at (9,6) {$9$};
	\node[vertex] (eleven) at (11,6) {$11$};
	\node[vertex] (fourteen) at (14,6){$14$};
	\node[vertex] (sixteen) at (15.8,6) {$16$};
	\node[vertex] (seventeen) at (17,6) {$17$};
	\node[vertex] (nineteen) at (19,6){$19$};
	\node[vertex] (two) at (1,0) {$2$};
	\node[vertex] (four) at (4,0) {$4$};
	\node[vertex] (five) at (5,0) {$5$};
	\node[vertex] (seven) at (7,0) {$7$};
	\node[vertex] (ten) at (10,0) {$10$};
	\node[vertex] (twelve) at (12,0) {$12$};
	\node[vertex] (thirteen) at (13.1,0) {$13$};
	\node[vertex] (fifteen) at (15,0) {$15$};
	\node[vertex] (eighteen) at (18,0) {$18$};
	\node[vertex] (twenty) at (20,0) {$20$};
	
	 \draw (one) to (two);
	  \draw (one) to (four);
	   \draw (one) to (seven);
	    \draw (one) to (twelve);
	    \draw (one) to (twenty);
	  \draw (two) to (three);
	    \draw (two) to (six);
	      \draw (two) to (eleven);
	      \draw (two) to (nineteen);
	   \draw (three) to (five);
	      \draw (three) to (ten);
	        \draw (three) to (eighteen);
	     \draw (four) to (nine);
	       \draw (four) to (seventeen);
	     \draw (five) to (eight);
	       \draw (five) to (sixteen);
	       \draw (six) to (seven);
	    \draw (six) to (fifteen);
	    \draw (seven) to (fourteen);
	    \draw (eight) to (thirteen);
	    \draw (nine) to (twelve);
	    \draw (ten) to (eleven);
	    \draw (fourteen) to (twenty);
	    \draw (fifteen) to (nineteen);
	    \draw (sixteen) to (eighteen);
\end{tikzpicture}
\caption{$G_{20}$ is bipartite}\label{fi:G20bipartite}
\end{figure}

Looking at $G_{20}$ (see Figure \ref{fi:G20bipartite}), one
 might expect that the bipartition of $G_n$ is always nearly balanced.
  However, there are values of  $n$ so that the ``imbalance'' is as large as one wants; before proving this, some lemmas are given.

 \begin{lemma}\label{le:2colourFSgraph}
 Let $n\geq 1$ and (by Theorem \ref{th:bipartite}) let $c:V(G_n)\rightarrow\{0,1\}$
be a 2-colouring defining  the bipartition of $G_n$, and suppose that $c(1)=c(F_1)=1$.
Then for each $k\in [1, \frac{n-1}{2}]$,
\begin{align}
  c(F_{2k})&=1, \mbox{ \  }c(F_{2k+1})=0, \label{eq:colorfibns}\\
 c\left(\frac{F_{6k-3}}{2}\right)&=1, \mbox{ and } c\left(\frac{F_{6k}}{2}\right)=0.\label{eq:colorfibns3mod0}
\end{align}
\end{lemma}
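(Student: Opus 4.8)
The plan is to prove each of the two displayed lines by exhibiting an explicit path in $G_n$ along which the proper $2$-colouring $c$ is forced to alternate, anchored at the vertex $1$. In both cases the mechanism is the same: once I identify a sequence of vertices in which each consecutive pair sums to a Fibonacci number, that sequence is a path, $c$ must take alternating values along it, and the anchoring hypothesis $c(1)=1$ then fixes the colour of every vertex on the path by its position.

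For (\ref{eq:colorfibns}) I would first observe that for every $m\ge 2$ with $F_{m+1}\le n$, the vertices $F_m$ and $F_{m+1}$ are distinct and adjacent in $G_n$, since $F_m+F_{m+1}=F_{m+2}$ is a Fibonacci number. Hence $F_2,F_3,F_4,\dots$ (as far as they remain $\le n$) trace out a path, so $c$ alternates along it. As $c(F_2)=c(1)=1$ by hypothesis, a one-line induction on $m$ gives $c(F_m)=1$ when $m$ is even and $c(F_m)=0$ when $m$ is odd; specialising to $m=2k$ and $m=2k+1$ yields (\ref{eq:colorfibns}).

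For (\ref{eq:colorfibns3mod0}) the real work is to find the analogous path through the relevant half-Fibonacci numbers. By Fact~\ref{fa:evenfibs}, $F_{3j}$ is even for every $j\ge 1$, so each $\tfrac{F_{3j}}{2}$ is a genuine integer vertex (when $\le n$). Applying the identity $F_{m-3}+F_m=2F_{m-1}$ (valid for $m\ge 3$) with $m=3j+3$ gives
\[ \frac{F_{3j}}{2}+\frac{F_{3(j+1)}}{2}=F_{3j+2},\]
a Fibonacci number, so consecutive terms of the strictly increasing sequence $\tfrac{F_3}{2},\tfrac{F_6}{2},\tfrac{F_9}{2},\dots$ are adjacent in $G_n$. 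This is again a path, $c$ alternates along it, and since $\tfrac{F_3}{2}=1$ with $c(1)=1$, induction on $j$ gives $c\!\left(\tfrac{F_{3j}}{2}\right)=1$ for odd $j$ and $0$ for even $j$. Writing $6k-3=3(2k-1)$ (an odd multiple of $3$) and $6k=3(2k)$ (an even multiple) then produces exactly the two values claimed.

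The only genuinely non-routine step will be recognising this second path: the adjacency of consecutive half-Fibonacci numbers $\tfrac{F_{3j}}{2}$ is far less transparent than that of consecutive Fibonacci numbers, and it is precisely where the identity $F_{m-3}+F_m=2F_{m-1}$ does the work. Beyond that, the two things I would keep careful track of are the parity bookkeeping — matching $6k-3$ and $6k$ to the odd- and even-indexed terms of the half-Fibonacci chain — and the check that each named vertex actually lies in $[n]$, i.e.\ that the Fibonacci and half-Fibonacci numbers in question do not exceed $n$, which is what is needed for the corresponding edges to be present in $G_n$.
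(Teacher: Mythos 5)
Your proposal is correct and follows essentially the same route as the paper: alternate the colouring along the path of consecutive Fibonacci numbers for (\ref{eq:colorfibns}), and along the chain of half-Fibonacci vertices $\tfrac{F_3}{2},\tfrac{F_6}{2},\tfrac{F_9}{2},\dots$ (adjacent because $F_{3j}+F_{3j+3}=2F_{3j+2}$) for (\ref{eq:colorfibns3mod0}). If anything, your version is slightly more complete, since the paper only states the adjacency for the pairs $\bigl\{\tfrac{F_{6k-3}}{2},\tfrac{F_{6k}}{2}\bigr\}$ and leaves implicit the links $\tfrac{F_{6k}}{2}\sim\tfrac{F_{6k+3}}{2}$ needed to anchor every pair back to $c(1)=1$.
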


\begin{proof}
Consecutive Fibonacci numbers are adjacent in $G_n$, and so a
 simple inductive argument shows that  (\ref{eq:colorfibns}) holds. 
To see (\ref{eq:colorfibns3mod0}),  by Fact \ref{fa:evenfibs}, starting
with $F_3$, every third Fibonacci number is even and  $c(\frac{F_3}{2})=
c(1)=1$. Since for any $k\geq 1$,
\[\frac{F_{6k}}{2}+\frac{F_{6k-3}}{2}=\frac{1}{2}(F_{6k-1}+F_{6k-2}+F_{6k-3})
=\frac{2F_{6k-1}}{2}=F_{6k-1},\]
pairs of Fibonacci numbers of the form $\{\frac{F_{6k}}{2}, \frac{F_{6k-3}}{2}\}$ 
are adjacent in $G_n$ and  so alternate in colour accordingly.
\end{proof}

\begin{lemma}\label{le:recursioncolour}
Let $k\geq 4$ and let $N$ satisfy $F_k<N<F_{k+1}$. Let $c$ be the 2-colouring
of $G_N$ as in Lemma \ref{le:2colourFSgraph}.
If $N\neq \frac{F_{k+2}}{2}$, then $c(N)=c(N-F_k)$.
If $N=\frac{F_{k+2}}{2}$, then
\[c(N)=c\left(\frac{F_{k+2}}{2}\right)=\begin{cases}
                                          1 & \mbox{if }k\equiv 1\pmod{6}\\
                                          0 & \mbox{if }k\equiv 4\pmod{6}.
                                          \end{cases}\]
\end{lemma}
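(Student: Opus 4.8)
The plan is to analyze the neighbours of vertex $N$ in $G_N$ and track how the 2-colouring propagates. By Lemma~\ref{le:lastvertex}, since $F_k < N < F_{k+1}$, the neighbours of $N$ depend on whether $N \leq \frac{F_{k+2}}{2}$ or $N > \frac{F_{k+2}}{2}$, so I would first split into these two regimes. The key structural observation, already exploited in the proof of Theorem~\ref{th:bipartite}, is that the vertex $v = N - F_k$ is a common neighbour of $N$'s neighbours: indeed when $N > \frac{F_{k+2}}{2}$, the two neighbours of $N$ are $v_1 = F_{k+1} - N$ and $v_2 = F_{k+2} - N$, and $v = N - F_k$ is adjacent to both $v_1$ and $v_2$ (since $v + v_1 = F_{k+1} - F_k = F_{k-1}$ and $v + v_2 = F_{k+2} - F_k = F_{k+1}$). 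This forces $c(v_1) = c(v_2)$ and $c(N)$ to be the colour opposite to them, which is the colour of $v = N - F_k$; hence $c(N) = c(N - F_k)$.

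For the case $N \leq \frac{F_{k+2}}{2}$ (with $N \neq \frac{F_{k+2}}{2}$), vertex $N$ has the single neighbour $v_1 = F_{k+1} - N$, and I would again verify that $v = N - F_k$ is adjacent to $v_1$ via $v + v_1 = F_{k-1}$, a Fibonacci number; since $N$ and $v$ sit at distance $2$ through $v_1$, they receive the same colour, again giving $c(N) = c(N - F_k)$. I should check that $v = N - F_k \geq 1$ and that $v \neq v_1$ so these are genuinely distinct vertices realizing a path of length two; the inequality $N > F_k$ handles the first, and Fact~\ref{fa:halffib} together with $N \neq \frac{F_{k+2}}{2}$ should separate $v$ from $v_1$.

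The boundary case $N = \frac{F_{k+2}}{2}$ is where the recursion $c(N) = c(N-F_k)$ can fail, because here $v = N - F_k = \frac{F_{k+2}}{2} - F_k = \frac{F_{k+2} - 2F_k}{2} = \frac{F_k + F_{k-1} + F_{k-1} - F_k}{2}$, and one must compute the colour directly rather than by reduction. For $N = \frac{F_{k+2}}{2}$ to be an integer, Fact~\ref{fa:evenfibs} forces $k+2 \equiv 0 \pmod 3$, i.e.\ $k \equiv 1 \pmod 3$, which splits into $k \equiv 1 \pmod 6$ and $k \equiv 4 \pmod 6$. I would determine $c\!\left(\frac{F_{k+2}}{2}\right)$ by invoking Lemma~\ref{le:2colourFSgraph}: writing $k+2 = 6m$ or $k+2 = 6m-3$ as appropriate, equations~(\ref{eq:colorfibns3mod0}) directly supply the values $c\!\left(\frac{F_{6m}}{2}\right) = 0$ and $c\!\left(\frac{F_{6m-3}}{2}\right) = 1$, which I expect to match the claimed $0$ when $k \equiv 4 \pmod 6$ and $1$ when $k \equiv 1 \pmod 6$.

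The main obstacle will be the bookkeeping in the boundary case: I must correctly align the residue of $k$ modulo $6$ with the two families of half-Fibonacci numbers in Lemma~\ref{le:2colourFSgraph}, and confirm that $N = \frac{F_{k+2}}{2}$ really is a vertex of $G_N$ (i.e.\ that $N \leq N$, trivially, but also that it is one of the even-indexed-half values covered by~(\ref{eq:colorfibns3mod0})). A secondary subtlety is ensuring that in the generic case the two-step colour-propagation argument is valid, which requires that the intermediate vertex $v_1$ (or both $v_1, v_2$) and the reduction target $v = N - F_k$ all lie in $[1,N]$ and are pairwise distinct; these are small inequalities following from $F_k < N < F_{k+1}$ and Fact~\ref{fa:halffib}, but they must be stated so the colouring of $G_N$ restricts correctly.
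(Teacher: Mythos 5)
Your proposal is correct and follows essentially the same route as the paper: the colour of $N$ is propagated along the length-two path $N \sim (F_{k+1}-N) \sim (N-F_k)$, with the hypothesis $N\neq\frac{F_{k+2}}{2}$ serving exactly to guarantee $F_{k+1}-N\neq N-F_k$, and the boundary case read off from equation~(\ref{eq:colorfibns3mod0}) of Lemma~\ref{le:2colourFSgraph}. Your split according to whether $N$ has one or two neighbours (and the observation about $v_2=F_{k+2}-N$) is harmless but unnecessary, since the chain through $F_{k+1}-N$ alone suffices in both regimes, which is how the paper phrases it.
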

\begin{proof} If $N=\frac{F_{k+2}}{2}$, then the value for
$c(\frac{F_{k+2}}{2})$ is given by Lemma \ref{le:2colourFSgraph}.
So suppose that  $N \neq \frac{F_{k+2}}{2}$.
Then $F_{k+1}-N \neq N - F_{k}$ and vertex $N$ is adjacent to  $F_{k+1}-N$.
 So, $c(N)=1-c(F_{k+1}-N)$. The vertex $F_{k+1}-N$ is adjacent to  $N-F_{k}$ and therefore $c(N)=1-c(F_{k+1}-N)=1-(1-c(N-F_k))=c(N-F_k)$.
\end{proof}

For the next two lemmas, for $N\geq 1$ define 
\begin{equation}\label{eq:S(N)}
S(N)=\sum_{i=1}^N c(i)-\frac{N}{2},
\end{equation}
 and put $S(0)=0$.

\begin{lemma}\label{le:SFibonacci}
For $k\geq 3$, and $S$ as defined in (\ref{eq:S(N)}), 
\[S(F_{k})=\begin{cases}
    \hspace*{.1in}   0               & \mbox{if }k\equiv 0,3\pmod{6}\\
    \hspace*{.1in}   \frac{1}{2} & \mbox{if }k\equiv 2,4\pmod{6}\\
       -\frac{1}{2} & \mbox{if }k\equiv 1,5\pmod{6}.
       \end{cases}\]
\end{lemma}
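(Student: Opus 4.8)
The plan is to set up a two-term recursion for $S(F_{k+1})$ in terms of $S(F_k)$ and $S(F_{k-1})$ using the colour recursion of Lemma \ref{le:recursioncolour}, and then to verify the claimed values by induction on $k$. Writing $A(N)=\sum_{i=1}^N c(i)$ so that $S(N)=A(N)-N/2$ by (\ref{eq:S(N)}), I would split $A(F_{k+1})=A(F_k)+\sum_{i=F_k+1}^{F_{k+1}-1}c(i)+c(F_{k+1})$ and attack the middle sum. The key observation is that the shift $i\mapsto i-F_k$ is a bijection from $\{F_k+1,\dots,F_{k+1}-1\}$ onto $\{1,\dots,F_{k-1}-1\}$ (using $F_{k+1}-F_k=F_{k-1}$), and by Lemma \ref{le:recursioncolour} it preserves colours at every vertex except possibly the single vertex $F_{k+2}/2$, which by Fact \ref{fa:halffib} lies strictly inside $(F_k,F_{k+1})$ and is a genuine vertex exactly when $k\equiv 1\pmod 3$ (so that $F_{k+2}$ is even, by Fact \ref{fa:evenfibs}). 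Hence $\sum_{i=F_k+1}^{F_{k+1}-1}c(i)=A(F_{k-1}-1)+\delta_k$, where the correction $\delta_k=c(F_{k+2}/2)-c(F_{k+2}/2-F_k)$ is $0$ unless $k\equiv 1\pmod 3$.

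Substituting $A(F_{k-1}-1)=A(F_{k-1})-c(F_{k-1})$ and converting back to $S$, the translation terms $F_k/2+F_{k-1}/2-F_{k+1}/2$ cancel, giving
\[ S(F_{k+1})=S(F_k)+S(F_{k-1})+\bigl(c(F_{k+1})-c(F_{k-1})\bigr)+\delta_k. \]
By (\ref{eq:colorfibns}) the colour $c(F_j)$ depends only on the parity of $j$, and since $k+1$ and $k-1$ have the same parity the bracketed difference vanishes. It remains to evaluate $\delta_k$ when $k\equiv 1\pmod 3$: here $F_{k+2}/2-F_k=F_{k-1}/2$ (which is an integer since then $k-1\equiv 0\pmod 3$), so $\delta_k=c(F_{k+2}/2)-c(F_{k-1}/2)$, and (\ref{eq:colorfibns3mod0}) of Lemma \ref{le:2colourFSgraph} pins down both colours from the residue of the index modulo $6$. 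This yields $\delta_k=1$ for $k\equiv 1\pmod 6$, $\delta_k=-1$ for $k\equiv 4\pmod 6$, and $\delta_k=0$ otherwise, so the recursion simplifies to $S(F_{k+1})=S(F_k)+S(F_{k-1})+\delta_k$.

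With this simplification in hand, I would finish by strong induction on $k$, taking as base cases the direct computations $S(F_3)=S(2)=c(1)+c(2)-1=0$ and $S(F_4)=S(3)=c(1)+c(2)+c(3)-\tfrac32=\tfrac12$ (using $c(1)=c(F_2)=1$, $c(F_3)=0$, $c(F_4)=1$). Since the recursion is valid for $k\geq 4$ and both the claimed value of $S(F_k)$ and the correction $\delta_k$ depend only on $k$ modulo $6$, the inductive step reduces to six routine residue-class checks: for each $r\in\{0,1,2,3,4,5\}$ one verifies that the claimed values of $S(F_k)$ and $S(F_{k-1})$ together with $\delta_k$ reproduce the claimed value of $S(F_{k+1})$.

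I expect the main obstacle to be the correction term $\delta_k$: one must confirm that the only place the shift $i\mapsto i-F_k$ fails to preserve colour is $F_{k+2}/2$, verify that it lands on $F_{k-1}/2$, and read off both half-Fibonacci colours correctly from (\ref{eq:colorfibns3mod0}) --- a computation where the interplay of the residues of $k$ modulo $3$ and modulo $6$ must be tracked carefully. The remaining algebra (the cancellation of the translation terms and the parity cancellation of $c(F_{k+1})-c(F_{k-1})$) is routine, as is the final six-case verification.
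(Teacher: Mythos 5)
Your proof is correct, but it takes a genuinely different route from the paper's. The paper evaluates $S(F_k)$ in one shot via the involution $i\mapsto F_k-i$ on $\{1,\dots,F_k\}$: since $i$ and $F_k-i$ are adjacent whenever $i\neq F_k/2$, each such pair contributes exactly $1$ to $\sum_{i=1}^{F_k}c(i)$, which collapses $S(F_k)$ to $c\bigl(\tfrac{F_k}{2}\bigr)+c(F_k)-1$ when $F_k$ is even and to $c(F_k)-\tfrac12$ when $F_k$ is odd, and the values are then read off directly from Lemma \ref{le:2colourFSgraph}. (The displayed sums in the paper write $c(F_{k+1}-i)$ where $c(F_k-i)$ is evidently intended; the exclusion of $i=F_k/2$ only makes sense for the pairing across $F_k$.) You instead derive the two-term recursion $S(F_{k+1})=S(F_k)+S(F_{k-1})+\delta_k$ by shifting the block $(F_k,F_{k+1})$ down by $F_k$, and your bookkeeping checks out: the only colour discrepancy under the shift is at $F_{k+2}/2$, it lands on $F_{k-1}/2$, the boundary term $c(F_{k+1})-c(F_{k-1})$ vanishes by parity of the index, $\delta_k$ is $+1$, $-1$, $0$ for $k\equiv 1$, $k\equiv 4$, and all other residues modulo $6$ respectively, and the claimed period-$6$ values together with the base cases $S(F_3)=0$, $S(F_4)=\tfrac12$ satisfy the recursion in all six residue classes. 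The paper's pairing is shorter and yields a closed form with no induction; your recursion is more in the spirit of the paper's own Lemma \ref{le:SN} (indeed it extends that additivity to the excluded endpoint $N=F_{k+1}$, with $\delta_k$ quantifying exactly the failure in the residues $1,4 \pmod 6$ that Lemma \ref{le:SN} avoids), and it makes the Fibonacci-like structure of the sequence $S(F_k)$ explicit. Both arguments are valid.
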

\begin{proof}

\noindent{\sc Case 1}: $k\equiv 0,3\pmod{6}$. 

 For  $i \neq \frac{F_{k}}{2}$, vertex $i$ is
 adjacent to $F_{k+1}-i$, so $c(i)+c(F_{k+1}-i)=1$.
\begin{align*}
S(F_{k})&=\sum_{i=1}^{F_k} c(i)-\frac{F_k}{2}=\sum_{i=1}^{\frac{F_k}{2}-1}
\left[c(i)+c(F_{k+1}-i)\right]+c\left(\frac{F_k}{2}\right)+c(F_k)-\frac{F_k}{2}\\
        &=\frac{F_{k}}{2}-1+c\left(\frac{F_k}{2}\right)+c(F_k)-\frac{F_k}{2}
                       =c\left(\frac{F_k}{2}\right)+c(F_k)-1.
\end{align*}
 By Lemmas 
\ref{le:2colourFSgraph} and  \ref{le:recursioncolour},
$c\left(\frac{F_{6l}}{2}\right)+c(F_{6l})-1=0+1-1=0$ and 
$c\left(\frac{F_{6l+3}}{2}\right)+c(F_{6l+3})-1=1+0-1=0$.\bigskip

\noindent{\sc Case 2}: $k\equiv 2,4\pmod{6}$.
\begin{align*}
S(F_{k})&=\sum_{i=1}^{F_k} c(i)-\frac{F_k}{2}=\sum_{i=1}^{\frac{F_k-1}{2}}
\left[c(i)+c(F_{k+1}-i)\right]+c(F_k)-\frac{F_k}{2}\\
        &=\frac{F_{k}-1}{2}+c(F_k)-\frac{F_k}{2}=-\frac{1}{2}+c(F_k)
                        =-\frac{1}{2}+1=\frac{1}{2}.
\end{align*}\bigskip

\noindent{\sc Case 3}: $k\equiv 1,5\pmod{6}$.
\begin{align*}
S(F_{k})&=\sum_{i=1}^{F_k} c(i)-\frac{F_k}{2}
    =\sum_{i=1}^{\frac{F_k-1}{2}}\left[c(i)+c(F_{k+1}-i)\right]+c(F_k)-\frac{F_k}{2}\\
        &=\frac{F_{k}-1}{2}+c(F_k)-\frac{F_k}{2}=-\frac{1}{2}+c(F_k)
                  =-\frac{1}{2}+0=-\frac{1}{2}.
\end{align*}
\end{proof}

To show that the bipartite imbalance is unbounded, a lemma is used:
\begin{lemma}\label{le:SN}
 Let $N$ and $k$ be  integers satisfying $F_{k}<N<F_{k+1}$. 
 With $S(N)$ as defined in (\ref{eq:S(N)}), if $k \equiv 0,2,3,5\pmod{6}$, 
  or if $k\equiv 1,4\pmod{6}$ and $N<\frac{F_{k+2}}{2}$,
   then $S(N)=S(F_k)+S(N-F_k)$.
 
 \end{lemma}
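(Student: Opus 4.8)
The plan is to exploit the recursive structure of the colouring established in Lemma \ref{le:recursioncolour}, namely that for $F_k < i < F_{k+1}$ with $i \neq \frac{F_{k+2}}{2}$ one has $c(i) = c(i-F_k)$. The quantity $S(N)$ counts the colour imbalance on $[1,N]$, and I would split this interval as $[1,F_k] \cup [F_k+1, N]$, writing
\begin{equation}
S(N) = \left(\sum_{i=1}^{F_k} c(i) - \tfrac{F_k}{2}\right) + \left(\sum_{i=F_k+1}^{N} c(i) - \tfrac{N-F_k}{2}\right).
\end{equation}
The first bracket is exactly $S(F_k)$. The goal is therefore to show the second bracket equals $S(N-F_k)$, which will follow once I show $\sum_{i=F_k+1}^{N} c(i) = \sum_{j=1}^{N-F_k} c(j)$ by reindexing $i = F_k + j$.

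First I would set $j = i - F_k$, so that $j$ ranges over $[1, N-F_k]$ as $i$ ranges over $[F_k+1, N]$. By Lemma \ref{le:recursioncolour}, provided no index $i$ in this range equals $\frac{F_{k+2}}{2}$, we have $c(F_k + j) = c(j)$ termwise, giving $\sum_{i=F_k+1}^N c(i) = \sum_{j=1}^{N-F_k} c(j)$, and hence the second bracket is $S(N-F_k)$ as required. The crux is therefore to verify that the exceptional value $\frac{F_{k+2}}{2}$ does \emph{not} fall in the summation range $[F_k+1, N]$ under the stated hypotheses. When $k \equiv 0,2,3,5 \pmod 6$, Fact \ref{fa:evenfibs} tells us $F_{k+2}$ is odd (since $k+2 \not\equiv 0 \pmod 3$ in these cases, except one must check $k \equiv 2 \pmod 6$ gives $k+2 \equiv 4$, $k \equiv 5$ gives $k+2 \equiv 7 \equiv 1$, etc.), so $\frac{F_{k+2}}{2}$ is not even an integer and the exception is vacuous. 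When $k \equiv 1,4 \pmod 6$, the number $F_{k+2}$ is even, so $\frac{F_{k+2}}{2}$ is a genuine integer lying in $(F_k, F_{k+1})$ by Fact \ref{fa:halffib}; here the extra hypothesis $N < \frac{F_{k+2}}{2}$ is precisely what guarantees this integer exceeds $N$ and so lies outside $[F_k+1, N]$.

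The main obstacle I anticipate is the parity bookkeeping modulo $6$: one must carefully confirm that the four residues $0,2,3,5$ are exactly those for which $F_{k+2}$ is odd (equivalently $k+2 \not\equiv 0 \pmod 3$), and that the remaining residues $1,4$ are exactly those where $F_{k+2}$ is even, matching the case split in the statement. A clean way to handle this is to note that $k \equiv 1,4 \pmod 6$ is equivalent to $k \equiv 1 \pmod 3$, whence $k+2 \equiv 0 \pmod 3$ and $F_{k+2}$ is even by Fact \ref{fa:evenfibs}; the complementary residues $0,2,3,5 \pmod 6$ are exactly $k \equiv 0,2 \pmod 3$, giving $k+2 \not\equiv 0 \pmod 3$ and $F_{k+2}$ odd. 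Once this parity dichotomy is in place, the reindexing argument is immediate and the lemma follows. I would present the parity check as the first sentence of the proof so that the termwise identity $c(F_k+j)=c(j)$ is then justified uniformly across all cases covered by the hypotheses.
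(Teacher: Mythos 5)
Your proposal is correct and follows essentially the same route as the paper's proof: split $\sum_{i=1}^N c(i)$ at $F_k$, reindex the tail via $c(F_k+j)=c(j)$ from Lemma \ref{le:recursioncolour}, and recover $S(F_k)+S(N-F_k)$. Your explicit parity check (that $k\equiv 1,4\pmod 6$ is exactly when $\tfrac{F_{k+2}}{2}$ is an integer in $(F_k,F_{k+1})$, so the hypothesis $N<\tfrac{F_{k+2}}{2}$ is needed precisely then) is a welcome addition that the paper leaves implicit.
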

\begin{proof}
\begin{align*}
S(N) &=\sum_{i=1}^N c(i)-\frac{N}{2} &&\mbox{(by def'n)}\\
     &=\sum_{i=1}^{F_k} c(i)-\frac{F_k}{2}+\sum_{i=F_k+1}^N c(i)- \frac{N-F_k}{2}\\
     &=S(F_k)+\sum_{i=1}^{N-F_k} c(i)-\frac{N-F_k}{2}&&\mbox{(by Lemma \ref{le:recursioncolour})}\\
     &=S(F_k)+S(N-F_k), &&\mbox{(by def'n)}
\end{align*}
as desired.
\end{proof}

\begin{theorem}\label{th:imbalance}
For each positive integer $z$, there exists  $n$ so that if $G_n$ has bipartition
$V(G_n)=A\cup B$, then $\left| |A| -\frac{n}{2}\right| = z$.
\end{theorem}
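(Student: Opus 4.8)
The plan is to exhibit, for each target imbalance $z$, an explicit value of $n$ and compute $S(n)$ directly using the recursive machinery already established in Lemmas \ref{le:SFibonacci} and \ref{le:SN}. Recall that $S(N)=|A|-\frac{N}{2}$ when $A$ is the colour class $c^{-1}(1)$, so $|S(n)|=z$ is exactly the assertion that the imbalance equals $z$. The key observation driving the construction is the decomposition $S(N)=S(F_k)+S(N-F_k)$ from Lemma \ref{le:SN}: each time we can ``peel off'' a Fibonacci number $F_k$ from $N$, we add a bounded contribution $S(F_k)\in\{0,\pm\tfrac12\}$ and recurse on the strictly smaller remainder $N-F_k$. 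To accumulate a large imbalance, I would iterate this peeling along a chain of indices whose residues mod $6$ all contribute with the \emph{same} sign, so the half-units add up rather than cancel.

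Concretely, I would use the Zeckendorf-type idea in reverse: choose $n$ of the form $n=\sum_{j=1}^{m} F_{k_j}$ for a carefully selected strictly decreasing sequence of indices $k_1>k_2>\cdots>k_m$, where consecutive indices differ by at least $2$ (so that after subtracting $F_{k_1}$ the remainder $N-F_{k_1}=\sum_{j\ge 2}F_{k_j}$ still lies strictly between $F_{k_2}$ and $F_{k_2+1}$, keeping Lemma \ref{le:SN} applicable at each stage). The cleanest choice is to take all the $k_j$ in a single residue class mod $6$ that yields a nonzero $S(F_{k_j})$; by Lemma \ref{le:SFibonacci}, picking the $k_j$ all $\equiv 2\pmod 6$ gives $S(F_{k_j})=\tfrac12$ for every term, and a spacing of $6$ between successive indices automatically guarantees the gap and parity conditions needed for the peeling. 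With $m$ such terms the recursion unwinds to $S(n)=\sum_{j=1}^{m}S(F_{k_j})=\frac{m}{2}$ (the final remainder being $0$ or a base case one checks), so choosing $m=2z$ yields $S(n)=z$ and hence imbalance exactly $z$.

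Before running the full recursion I would verify the two side conditions that make Lemma \ref{le:SN} fire at each step: first, that the residue class chosen keeps us out of the exceptional $k\equiv 1,4\pmod 6$ branch (or, if one insists on those classes, that each intermediate remainder stays below $\frac{F_{k+2}}{2}$); and second, that the index spacing keeps each remainder in the open interval $(F_{k_j}, F_{k_j+1})$ rather than landing exactly on a Fibonacci number or a half-Fibonacci value where $S(F_k)$ must be used as a base case. Both are routine given the super-additive growth of the Fibonacci sequence, since dropping the index by at least $2$ forces the tail sum $\sum_{j\ge 2}F_{k_j}$ to be comfortably smaller than $F_{k_2+1}$ yet larger than $F_{k_2}$.

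The main obstacle, as I see it, is not the arithmetic of summing half-units but the bookkeeping at the boundary of the recursion: one must pin down exactly what residue class and spacing simultaneously keep every intermediate remainder in the admissible range \emph{and} avoid the $\tfrac12$ contributions cancelling against $-\tfrac12$ contributions from a neighbouring class. I expect that the specific construction $n=F_{6z+2}+F_{6z-4}+\cdots$ (an arithmetic progression of indices of length depending linearly on $z$, all $\equiv 2\pmod 6$) resolves this cleanly, but the delicate point is confirming that the final remainder in the recursion is a value at which $S$ is known exactly, so that no uncontrolled term spoils the tally. Handling that last remainder — likely by arranging the smallest index to be a base case from Lemma \ref{le:SFibonacci} — is where the proof will need its only genuine care.
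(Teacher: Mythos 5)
Your proposal is correct and takes essentially the same route as the paper, which sets $n=\sum_{i=1}^{2z}F_{6i+1}$ and unwinds $S(n)$ via Lemmas \ref{le:SFibonacci} and \ref{le:SN} to get $2z$ contributions of $-\tfrac12$; your choice of indices $\equiv 2\pmod 6$ is an equally valid variant that in fact lands in the unconditional branch of Lemma \ref{le:SN}, so the side condition $N<\tfrac{1}{2}F_{k+2}$ that the paper must verify becomes unnecessary. The one correction needed is in your closing formula: $F_{6z+2}+F_{6z-4}+\cdots$ down to $F_8$ has only $z$ terms, whereas your own tally requires $m=2z$ of them, i.e., $n=\sum_{i=1}^{2z}F_{6i+2}$.
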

\begin{proof} Let $z\in \mathbb{Z}^{+}$.
Define $n=\sum_{i=1}^{2z} F_{6i+1}$. Then $F_{12z+1}\leq n<F_{12z+2}$
and so $n<\frac{F_{12z+3}}{2}=\frac{F_{(12z+1)+2}}{2}$. Using $k=12z+1\equiv 1\pmod{6}$, by Lemmas  \ref{le:SFibonacci}
and \ref{le:SN}, 
\[ S(n)=S\left(\sum_{i=1}^{2z} F_{6i+1}\right)
         =\sum_{i=1}^{2z} S(F_{6i+1})=2z \cdot \frac{-1}{2}=-z.\]
Assuming that (as in Lemma \ref{le:2colourFSgraph}) $A$ is the set of vertices 
with colour 1, the imbalance of $A$ is 
$\left| |A| -\frac{n}{2}\right|=|S(n)+\frac{n}{2}-\frac{n}{2}|=z$.
\end{proof}

For any $n$, it is possible to determine the sizes of parts in bipartition 
of $G_n$ using techniques of Theorem \ref{th:imbalance} and 
Zeckendorf's \cite{Zeck:72} representation of $n$.
An example covered by Theorem \ref{th:imbalance} with $z=2$ is
when $n=F_{20}=7164$, in which case the bipartition is 
3580 vs  3584.

\section{Cycles in $G_n$} \label{se:cycles}
 The girth of a graph $G$, denoted $\mbox{girth}(G)$, is the length of a 
 shortest cycle in $G$.
As seen in Figure \ref{fi:G6}, if $n\leq 6$ then $G_n$ is acyclic.
\begin{corollary}
For $n \geq 7$, $\mbox{girth}(G_n) = 4$.
\end{corollary}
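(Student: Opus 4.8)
The plan is to establish the two inequalities $\mbox{girth}(G_n) \geq 4$ and $\mbox{girth}(G_n) \leq 4$ separately. For the lower bound, I would invoke Theorem \ref{th:bipartite}: since $G_n$ is bipartite, it contains no odd cycle, and in particular no triangle. Provided $G_n$ contains at least one cycle when $n \geq 7$ (which the construction below supplies), any shortest cycle therefore has even length, and hence length at least $4$, giving $\mbox{girth}(G_n) \geq 4$.

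For the upper bound, I would exhibit an explicit $4$-cycle that lives inside every $G_n$ with $n \geq 7$. The four vertices $1, 2, 6, 7$ work: the consecutive sums along the cycle $1$--$2$--$6$--$7$--$1$ are $1+2 = 3 = F_4$, $2+6 = 8 = F_6$, $6+7 = 13 = F_7$, and $7+1 = 8 = F_6$, all Fibonacci numbers, so each of the four pairs forms an edge. Since these vertices and edges depend only on the values of the sums and not on $n$, this $4$-cycle is present in $G_n$ for every $n \geq 7$, which forces $\mbox{girth}(G_n) \leq 4$. (The same $4$-cycle arises structurally in the Case~2 construction of the proof of Theorem \ref{th:bipartite}, where the newly added vertex and its partner $m+1-F_\ell$ each join the two common neighbours $v_1$ and $v_2$.)

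Combining the two bounds yields $\mbox{girth}(G_n) = 4$ for all $n \geq 7$. I do not anticipate any real obstacle here: the only point requiring a moment's care is to confirm that $G_n$ genuinely contains a cycle, so that the bipartiteness argument rules out a triangle rather than vacuously permitting an infinite girth. The explicit $4$-cycle on $\{1,2,6,7\}$ settles this simultaneously with the upper bound, so no separate verification is needed.
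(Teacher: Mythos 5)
Your proposal is correct and follows exactly the paper's argument: exhibit the $4$-cycle $(1,2,6,7)$ and invoke Theorem \ref{th:bipartite} to exclude triangles. The extra verification of the four sums is a welcome detail the paper omits, but the approach is identical.
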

\begin{proof}
For $n \geq 7$, a 4-cycle in $G_n$ is  $(1,2,6,7)$.
 By Theorem \ref{th:bipartite}, $G_n$ is bipartite and so contains no triangles.
\end{proof}

For large $n$, $G_n$ contains many 4-cycles.  When $F_k\leq n <F_{k+1}$ 
and $n>\frac{F_{k+2}}{2}$,
a  4-cycle is $(n-F_k, F_{k+2}-n, n, F_{k+1}-n)$ as shown Figure \ref{fi:last4cycle},
where edges are labelled with sums.

\begin{figure}[h]\centering
\begin{tikzpicture}

	\tikzstyle{vertex}=[circle, fill=black,  minimum size=6pt,inner sep=0pt]
	\tikzstyle{emptyvertex}=[circle, draw=black, minimum size=5pt,inner sep=0pt]
	\node[vertex,label=below:{$n-F_k$}] at (-5,0) (x1){};
	\node[emptyvertex,label=below:{$F_{k-1}$}] at (-3.5,0) (x2){};
	\node[vertex,label=below:{$F_{k+1}-n$}] at (-2,0) (x3){};
	\node[emptyvertex,label=below:{$F_k$}] at (0,0) (x4){};
	\node[vertex,label=above:{$F_{k+2}-n$}] at (1.5,0) (x5){};
	\node[vertex,label=below:{$n$}] at (4.5,0) (x6){};	
	\node[emptyvertex,label=below:{$F_{k+1}$}] at (6,0) (x7){};	
        \draw (x1) to  [bend left = 30] node[anchor=south]{$F_{k-1}$}(x3);
        \draw (x1) to  [bend right= 30] node[anchor=north]{$F_{k+1}$}(x5);
	\draw(x3) to  [bend left = 40] node[anchor=south]{$F_{k+1}$}(x6);
	\draw (x5) to [bend right = 30] node[anchor=north]{$F_{k+2}$}(x6);
\end{tikzpicture}
\caption{A 4-cycle in $G_n$}\label{fi:last4cycle}
\end{figure}
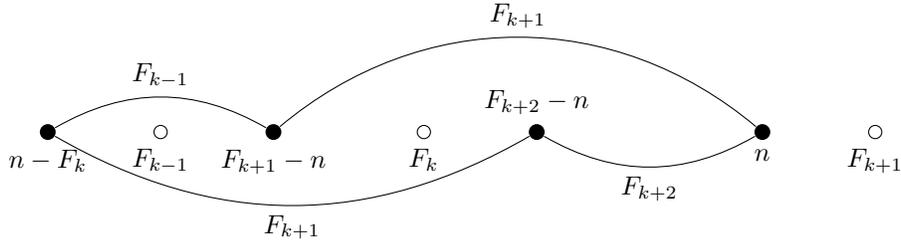

\begin{theorem}
Let $k \geq 2$ and $n=F_{2k+3}-1$. Then $G_n$ contains a cycle of length $2k$.
\end{theorem}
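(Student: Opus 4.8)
The plan is to exhibit an explicit $2k$-cycle. Guided by the small cases (for $k=2$ the $4$-cycle $1,12,9,4$ in $G_{12}$, and for $k=3$ the $6$-cycle $1,33,22,12,9,4$ in $G_{33}$), I would take the cyclic sequence
\[ 1,\ F_{2k+3}-1,\ F_{2k+2}+1,\ F_{2k+1}-1,\ F_{2k}+1,\ \ldots,\ F_{6}+1,\ F_{5}-1 \]
and close it back to $1$. Concretely, set $v_0=1$, and for each index $i$ with $5\le i\le 2k+3$ let the associated interior vertex be $F_i-1$ when $i$ is odd and $F_i+1$ when $i$ is even. Listing these interior vertices in order of decreasing $i$ produces $2k-1$ vertices, so together with $v_0$ we have exactly $2k$ vertices.

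First I would verify that this is a sequence of $2k$ distinct vertices of $G_n$. Each interior value lies within distance $1$ of one of the Fibonacci numbers $F_5,\dots,F_{2k+3}$, and since $F_{i+1}-F_i=F_{i-1}\ge F_4=3$ for $i\ge 5$, consecutive used values are strictly decreasing and hence pairwise distinct. They all lie in $[\,4,\,F_{2k+3}-1\,]=[4,n]$, and none equals $v_0=1$. This settles distinctness together with the requirement that every vertex belongs to $[1,n]$ (in particular $F_{2k+3}-1=n$ is the largest one).

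The heart of the argument is checking the $2k$ edges, where the key observation is a sign cancellation. Two consecutive interior vertices come from Fibonacci indices $i$ and $i-1$ of opposite parity, so one carries $+1$ and the other $-1$; their sum is therefore exactly $F_i+F_{i-1}=F_{i+1}$, a Fibonacci number, for every $i$ from $6$ up to $2k+3$. The two remaining boundary edges both meet $v_0=1$: the edge to $F_{2k+3}-1$ has sum $F_{2k+3}$, and the edge to $F_5-1=4$ has sum $F_5$, both Fibonacci. Since the interior chain contributes $2k-2$ edges and the two boundary edges the rest, this accounts for all $2k$ consecutive pairs, each adjacent in $G_n$.

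The step I expect to require the most care is purely the bookkeeping: fixing the parity convention so that the $\pm 1$ alternation starts and ends correctly (note that $F_{2k+3}-1$ and $F_5-1$ both sit at odd indices and so both carry $-1$, yet on the cycle each is joined to $v_0=1$ rather than to one another), and confirming the index ranges so that the telescoping sums $F_{i+1}$ for $6\le i\le 2k+3$ and the two boundary sums $F_{2k+3},F_5$ together cover all $2k$ edges with no overlap or omission. Once the indexing is pinned down, each of the three edge-types is a one-line identity, the vertices are manifestly distinct and lie in $[1,n]$, so the sequence is a cycle of length $2k$ in $G_n$, as required.
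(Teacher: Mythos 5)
Your proof is correct, and the cycle you construct is in fact the very same cycle as the paper's, just written in closed form and traversed in the opposite direction: the paper defines it recursively by $c_1=1$, $c_2=4$, $c_i=F_{i+4}-c_{i-1}$, and one checks that $c_i=F_{i+3}-(-1)^i$ for $i\ge 2$, which matches your $F_j+(-1)^j$ under $j=i+3$. The difference is in the verification. The paper must unwind the recursion and telescope, invoking the identity $\sum_{i=0}^{\ell}F_{2i}=F_{2\ell+1}-1$ (Fact \ref{fa:sumevenfibs}) to confirm that the final vertex $c_{2k}$ lands exactly on $F_{2k+3}-1$ so that the cycle closes up at $1$; your closed form makes that global computation unnecessary, since every edge reduces to the one-line identity $F_{i-1}+F_i=F_{i+1}$ (the $\pm 1$'s cancelling by the parity alternation) and the two boundary edges are immediate. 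You also explicitly verify distinctness and membership in $[1,n]$ via the gap estimate $F_{i+1}-F_i=F_{i-1}\ge 3$, points the paper leaves implicit. In short: same cycle, but your explicit parametrization trades the paper's telescoping-sum lemma for purely local checks, which is arguably cleaner.
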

\begin{proof}
Define $c_1=1$, $c_2=4$,
 and for each $i=3,\ldots,2k$, recursively define $c_i=F_{i+4}-c_{i-1}$.
For each $i\geq 2$, $c_{i-1}+c_i$ is a Fibonacci number and so $\{c_{i-1},c_i\}$ is an edge
in $G_n$. To show that
$(c_1,c_2,...,c_{2k})$ is a cycle of length $2k$ in $G_n$, it remains to show that
$c_{2k}=F_{2k+3}-1$, in which case $1+c_{2k}$ is also a Fibonacci number.
When $k=2$, $(c_1,c_2,c_3,c_4)=(1,4,9, 12)$ is a cycle, so assume that $k\geq 3$.
Calculating,
 \begin{align*}
 c_{2k}&=F_{2k+4}-c_{2k-1}\\
           &=F_{2k+4}-(F_{2k+3}-c_{2k-2})\\
           &=F_{2k+2}+c_{2k-2}\\
           &=F_{2k+2}+F_{2k+2}-c_{2k-3}\\
           &=F_{2k+2}+F_{2k+2}-(F_{2k+1}-c_{2k-4})\\
           &=F_{2k+2}+F_{2k}+c_{2k-4}\\
           &\ \ \vdots \\
           &=\left(\sum_{i=3}^{k+1}F_{2i}\right)+c_2\\
           & =\left(\sum_{i=0}^{k+1}F_{2i}\right)-(F_0+F_2+F_4)+4\\
           &=\sum_{i=0}^{k+1}F_{2i}\\
           &=F_{2k+3}-1,&&\mbox{(by Fact \ref{fa:sumevenfibs})}
\end{align*}
as desired.
\end{proof}

\begin{theorem} \label{th:nocross}
Let $n\geq 7$.
 If $C = (a_1,a_2,...,a_m)$ is a cycle in the Fibonacci-sum graph $G_n$, 
 then there do not exist
 edges $\{a_i,a_k\}$ and $\{a_j,a_\ell\}$  in $C$ with $i < j < k < \ell$; in other words,
   there are no crossing chords inside $C$.
 \end{theorem}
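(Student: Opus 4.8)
The plan is to induct on the length of the cycle $C$, peeling off its largest vertex. The engine is a local fact about the maximum. Let $M=\max\{a_1,\dots,a_m\}$ and let $F_k\le M<F_{k+1}$. Every neighbour of $M$ that lies on $C$ is smaller than $M$, so its sum with $M$ lies strictly between $F_k$ and $2F_{k+1}<F_{k+3}$; by the band-counting of Lemma \ref{le:lastvertex} this sum is $F_{k+1}$ or $F_{k+2}$. Hence the only candidate cycle-neighbours of $M$ are $u_1=F_{k+1}-M$ and $u_2=F_{k+2}-M$. Since $M$ has degree at least $2$ on $C$, both are realised, which forces $F_{k+2}-M<M$, i.e. $M>\tfrac12F_{k+2}$. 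Because $M$ already spends its two admissible neighbours on the cycle itself, $M$ carries no chord, and in particular $M\notin\{a_i,a_j,a_k,a_\ell\}$.

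Running the same band-count at $u_2$ (which lies in the same band, $F_k<u_2<F_{k+1}$) shows its only possible cycle-neighbours are $M$ and $u_3:=M-F_k$, so its second cycle-neighbour is exactly $u_3$ and $u_2$ also carries no chord. A short computation gives $u_1+u_3=F_{k+1}-F_k=F_{k-1}$, so $\{u_1,u_3\}$ is an edge of $G_n$; since $M>\tfrac12F_{k+2}$ one checks $u_1\neq u_3$, and $u_1,u_3,u_2,M$ are four distinct vertices forming precisely the $4$-cycle of Figure \ref{fi:last4cycle}, appearing on $C$ as the consecutive block $u_1,M,u_2,u_3$. The reduction step is to short-circuit this block: delete $M$ and $u_2$ and insert the edge $\{u_1,u_3\}$, producing a cycle $C'$ in $G_n$ with two fewer vertices. (When $C$ has length $4$ the block is all of $C$, which is the base case.)

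It remains to see that crossing chords survive the reduction, which is the step to handle with care. The deleted vertices $M,u_2$ are not chord-endpoints, so $a_i,a_j,a_k,a_\ell$ all persist and keep their cyclic order on $C'$; hence they still interleave as $i<j<k<\ell$. A surviving chord can fail to be a chord of $C'$ only by coinciding with the single new cycle-edge $\{u_1,u_3\}$; but a chord equal to $\{u_1,u_3\}$ cannot be half of a crossing pair, because any chord crossing it would need an endpoint strictly between $u_1$ and $u_3$ along the short arc $u_1,M,u_2,u_3$, i.e. equal to $M$ or $u_2$, and those carry no chords. Thus both crossing chords remain crossing chords of $C'$, contradicting the inductive hypothesis.

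For the base case, a cycle of length $4$ with crossing chords would have both diagonals present, and these together with the cycle give a $K_4$, hence a triangle, which is impossible since $G_n$ is bipartite (Theorem \ref{th:bipartite}). The two delicate points, which I expect to be the main obstacles, are verifying $M\neq\tfrac12F_{k+2}$ so that the extracted $4$-cycle is nondegenerate (otherwise $u_1=u_3$ and $M$ would have only one admissible smaller neighbour, incompatible with being a cycle maximum), and confirming that the rerouting neither identifies two chord-endpoints nor promotes a crossing chord to a cycle-edge.
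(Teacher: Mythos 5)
Your proof is correct and is essentially the paper's own argument: both peel off the maximum vertex $M$ together with $F_{k+2}-M$ and reroute the cycle through the edge $\{F_{k+1}-M,\,M-F_k\}$ (whose sum is $F_{k-1}$), reducing to a shorter cycle to which the inductive hypothesis applies. The only differences are cosmetic --- you induct on cycle length within a fixed $G_n$ rather than on $n$ itself, and you make explicit the step the paper leaves implicit, namely that the surviving chords still cross in $C'$ and that neither of them can coincide with the new cycle edge $\{u_1,u_3\}$.
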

 \begin{proof} The proof is by induction on $n$. 
 
 When $n=7$, there is only one cycle, which has no chords and hence
  no crossing chords.  Let $N\geq 7$, and assume that for all
 $n\leq N$, the statement holds.  It remains to show that  $G_{N+1}$ has no crossing
 chords.

 Let $k$ be so that $F_k\leq N+1 <F_{k+1}$ and let $C = (a_1,a_2,...,a_m)$ be a cycle
 in $G_{N+1}$. If $N+1=F_k$, then $\deg(N+1)=1$, and so $N+1$ is not on $C$ and so by
 induction hypothesis, $G_{N+1}$ has no crossing chords.

 So assume that $F_k<N+1<F_{k+1}$. Suppose that $N+1$ is on $C$, since otherwise the 
 statement follows from induction hypothesis. 
 By Lemma  \ref{le:lastvertex}, the only neighbours 
 of $N+1$ are  $F_{k+1}-(N+1)$  and $F_{k+2}-(N+1)$. No chord in $C$ contains $N+1$ 
 (since otherwise, the degree of $N+1$ would  be at least 3). 
 Similarly, $F_{k+2}-(N+1)$ has only two neighbours in $G$
 (namely $N+1-F_k$ and $N+1$), and so is not the endpoint of a chord.
 Thus, for some $b\in\{1,\ldots,m\}$, let $a_{b}=N+1-F_k$, $a_{b+1}=F_{k+2}-(N-1)$,
 $a_{b+2}=N+1$, and $a_{b+3}=F_{k+1}-(N+1)$.

 Then
  $C=(a_1, a_2, ..., a_{b-1}, N+1-F_k, F_{k+2}-(N+1), N+1, F_{k+1}-(N+1), a_{b+4},\ldots, a_m)$.
 Consider
 $C^{\prime}=(a_1, a_2, ..., a_{b-1}, N+1-F_k,  F_{k+1}-(N+1), a_{b+4}, \ldots, a_m)$.
  Since $C^\prime$ is contained in $G_N$, by the induction hypothesis,
   $C^{\prime}$ has no crossing chords. Therefore, $C$ has no crossing chords.
 \end{proof}
 
 Although any cycle in $G_n$ does not contain crossing chords,  $G_n$ satisfies a property
 similar to being chordal (where every cycle induces a chord). First note that any copy of
a 4-cycle in $G_n$ has no chords (since a chord would then create a triangle, 
which is impossible  because $G_n$ is bipartite).

 \begin{theorem}\label{th:longcyclechord}
 For any $n \geq 1$, every cycle of $G_n$  of length at least 6 contains a 
 chord that has exactly two vertices of the cycle on one side of the chord 
 (so forming a 4-cycle).
 \end{theorem}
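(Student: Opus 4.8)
The plan is to mimic the inductive strategy of Theorem~\ref{th:nocross}: induct on $n$ and exploit the very restricted neighbourhood of the largest vertex. I would use strong induction on $n$. For every $n\le 7$ the graph $G_n$ has no cycle of length at least $6$ (indeed its only cycle is the $4$-cycle $(1,2,6,7)$), so the statement is vacuously true; these are the base cases. For the inductive step, assume the claim for all $G_m$ with $m\le N$, let $k$ satisfy $F_k\le N+1<F_{k+1}$, and let $C$ be a cycle of length at least $6$ in $G_{N+1}$.

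If the vertex $N+1$ does not lie on $C$, then $C$ is a cycle of $G_N$, and the induction hypothesis supplies a chord cutting off a $4$-cycle. That edge is also an edge of $G_{N+1}$ joining two vertices of $C$, and the two sides of $C$ are unchanged, so it remains a chord of the required type in $G_{N+1}$. If instead $N+1$ lies on $C$, then it has two cycle-neighbours, forcing $\deg_{G_{N+1}}(N+1)=2$; by Lemma~\ref{le:lastvertex} this requires $N+1>\tfrac{F_{k+2}}{2}$ and makes the two neighbours exactly $v_1=F_{k+1}-(N+1)$ and $v_2=F_{k+2}-(N+1)$. A short sum estimate (as in the proof of Theorem~\ref{th:nocross}) shows $F_k<v_2<F_{k+1}$ and that $v_2$ has only the two neighbours $N+1$ and $N+1-F_k$, so on $C$ the vertex $v_2$ sits between $N+1$ and $N+1-F_k$. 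Hence $C$ contains the path $N+1-F_k,\ v_2,\ N+1,\ v_1$. Since $(N+1-F_k)+(F_{k+1}-(N+1))=F_{k-1}$ is a Fibonacci number, $\{N+1-F_k,\,v_1\}$ is an edge of $G_{N+1}$, and together with that path it closes up the $4$-cycle $(N+1-F_k,\ v_2,\ N+1,\ v_1)$.

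The remaining point—where the hypothesis $|C|\ge 6$ is essential—is to verify that $\{N+1-F_k,\,v_1\}$ is genuinely a \emph{chord} of $C$ rather than one of its edges. I would first confirm that the four vertices $N+1-F_k,\ v_2,\ N+1,\ v_1$ are distinct: each possible coincidence forces $N+1\in\{\tfrac{F_{k+1}}{2},\tfrac{F_{k+2}}{2}\}$, which contradicts $N+1>\tfrac{F_{k+2}}{2}$ by Fact~\ref{fa:halffib}. Then, because these four distinct vertices span a path of three edges of $C$ while $C$ has length at least $6$, the endpoints $N+1-F_k$ and $v_1$ are joined on the other arc of $C$ by a path of length at least $3$, so they are not adjacent along $C$. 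Thus $\{N+1-F_k,\,v_1\}$ is a chord, and it cuts off exactly the two interior vertices $v_2$ and $N+1$, giving the desired $4$-cycle.

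I expect the only delicate part to be this last distinctness-and-adjacency bookkeeping, together with the precise bound $N+1>\tfrac{F_{k+2}}{2}$ that guarantees $N+1$ has degree $2$; the structural neighbour counts for $N+1$ and for $v_2$ are immediate from the elementary sum estimates already used in Lemma~\ref{le:lastvertex} and Theorem~\ref{th:nocross}, and I would simply reuse them.
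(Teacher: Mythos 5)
Your proof is correct and follows essentially the same route as the paper, which simply takes $m$ to be the largest vertex of the cycle (dispensing with the induction wrapper) and uses exactly your forced-neighbour argument --- $m$ has only the neighbours $F_{k+1}-m$ and $F_{k+2}-m$, and $F_{k+2}-m$ has only the further neighbour $m-F_k$ --- to produce the chord $\{F_{k+1}-m,\,m-F_k\}$ with sum $F_{k-1}$. One tiny bookkeeping slip: the coincidence $v_2=N+1-F_k$ gives $N+1=\tfrac{1}{2}(F_{k+2}+F_k)$, which is not one of your two listed values, but it is excluded anyway since $\tfrac{1}{2}(F_{k+2}+F_k)>F_{k+1}>N+1$ (or because it would force $\deg(v_2)=1$ while $v_2$ lies on $C$).
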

 \begin{proof}
 Fix $n$  and let $C$ be a cycle of length at least 6 in $G_n$.
  Let $m$ be the vertex in  $C$  with the largest value, and let $k$ be so that
  $F_k \leq m \leq F_k+1$.
  Then vertices $F_{k+1}-m$ and $F_{k+2}-m$ are the only
  neighbours of $m$ in $G_m$, and so are both adjacent to $m$ in $C$.
  However, $F_{k+2}-m$ is
  adjacent to $m-F_k$ in $G_m$, and therefore $\{F_{k+1}-m,m-F_k\}$ is a 
  desired chord in $C$.
 \end{proof}

 \section{Treewidth}\label{se:treewidth}
 
 The ``treewidth'' of a graph $G$ is a measure of how close $G$ is to 
 being a tree.
 The notions of tree decompositions and 
 treewidth (under the name ``dimension'') were introduced by Halin \cite{Hali:76}
 in 1976, and independently, later by Seymour and Thomas (see \cite{SeTh:86}) in
 in their study of graph minors. 
  \begin{definition}
  A {\em tree decomposition} of a graph 
 $G$ is a pair $(T,\mathcal{ V})$, where $T$ is a tree and 
 $\mathcal{ V}=\{V_t\subseteq V(G):t\in V(T)\}$ is a family
 of subsets of $V(G)$ indexed by vertices in $T$ so that three conditions hold:
(i) $\cup_{i\in V(T)}=V$;
 (ii)  for every edge $\{x,y\}\in E(G)$, there exists $V_t\in \mathcal{ V}$
 containing both $x$ and $y$;
 (iii) if $t_it_jt_k$ is a path in $T$, then $V_i\cap V_k\subseteq V_j$.
 The width of a tree decomposition is $(T,\mathcal{V})$ is $\max\{|V_t|-1:t\in V(T)\}$
 and the {\em treewidth} of $G$, denoted tw$(T)$
  is the minimum width of any tree-decomposition of $G$.
 \end{definition}
 
 There are different characterizations of treewidth, some of which are
 described in \cite{Dies:10}.  For example, 
  Seymour and Thomas  \cite{SeTh:93} characterized treewidth
  in terms of brambles.
Another  characterization of treewidth uses chordal graphs and clique numbers.
For a graph $H$, let $\omega(H)$ denote the clique number of $H$, 
the order of the largest complete subgraph of $H$.
\begin{lemma}[see {\cite[Cor.~12.3.12]{Dies:10}}]\label{le:twchordal}
For any graph $G$,  
\[ \mbox{tw}(G)=\min\{\omega(H)-1:G\subseteq H, H \mbox{ chordal}\}.\]
   \end{lemma}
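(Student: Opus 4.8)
The plan is to prove the stated equality by establishing the two inequalities separately, each resting on the classical \emph{clique-tree} characterization of chordal graphs. The two structural facts I would isolate first are: (A) if $(T,\mathcal V)$ is any tree decomposition of a graph $G$ and $H$ is obtained from $G$ by turning each bag $V_t$ into a clique (joining every pair of vertices that share a bag), then $H$ is chordal, $(T,\mathcal V)$ is still a tree decomposition of $H$, and $\omega(H)=\max_t|V_t|$; and (B) every chordal graph $H$ admits a tree decomposition whose bags are exactly the maximal cliques of $H$, so that its width is $\omega(H)-1$. Both are standard, but (B) is where the real content lies.

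For the inequality $\mbox{tw}(G)\ge\min\{\omega(H)-1:G\subseteq H,\ H\text{ chordal}\}$, I would start from a tree decomposition of $G$ of minimum width $\mbox{tw}(G)$ and apply (A) to fill its bags, producing a chordal supergraph $H\supseteq G$. By (A) the filled decomposition still has width $\max_t|V_t|-1=\mbox{tw}(G)$ and $\omega(H)=\max_t|V_t|=\mbox{tw}(G)+1$. Hence this particular $H$ realizes $\omega(H)-1=\mbox{tw}(G)$, so the minimum on the right is at most $\mbox{tw}(G)$.

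For the reverse inequality $\mbox{tw}(G)\le\omega(H)-1$ for every chordal $H\supseteq G$, I would take an arbitrary chordal supergraph $H$ on the same vertex set and apply (B) to get a tree decomposition of $H$ of width $\omega(H)-1$. Since $G\subseteq H$, conditions (i)--(iii) in the definition of a tree decomposition hold verbatim for $G$ as well, so this is also a tree decomposition of $G$; therefore $\mbox{tw}(G)\le\omega(H)-1$. Minimizing over $H$ gives $\mbox{tw}(G)\le\min\{\omega(H)-1:\ G\subseteq H,\ H\text{ chordal}\}$, and combining the two inequalities yields equality.

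The main obstacle is fact (B): proving that a chordal graph has a clique tree. I would obtain it from a perfect elimination ordering (every chordal graph has a simplicial vertex, by induction), building the tree decomposition by peeling off simplicial vertices so that each removed vertex together with its neighbourhood forms a clique that becomes a bag; the intersection condition (iii) then follows from the Helly property for subtrees of a tree. The remaining ingredients --- that bag-filling preserves the tree-decomposition axioms, that the maximal cliques of a bag-filled graph are the bags, and that a supergraph's tree decomposition restricts to one of the subgraph --- are routine verifications straight from the definitions.
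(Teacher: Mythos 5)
Your proof is correct. Note, however, that the paper does not prove this lemma at all --- it is quoted as a known result with a citation to Diestel's \emph{Graph Theory} (Cor.~12.3.12) --- and your two-inequality argument (filling in the bags of an optimal tree decomposition to get a chordal supergraph for one direction, and using the clique-tree decomposition of an arbitrary chordal supergraph for the other) is essentially the standard proof found in that source, so there is nothing to compare against within the paper itself.
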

So the treewidth of a tree is 1, the treewidth of a cycle is 2, 
and the treewidth of the complete graph $K_n$ is $n-1$.

For $n=1,\ldots, 6$, the treewidth of $G_n$  is equal to 1 since these graphs are trees. 
However, for $n\geq 7$  each $G_n$ contains a cycle and so has treewidth at least 2.
 \begin{theorem}\label{th:treewidth}
 For each $n\geq 7$, $G_n$ has treewidth 2.
 \end{theorem}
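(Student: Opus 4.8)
The lower bound is already in hand: for $n\ge 7$ the graph $G_n$ contains a cycle, so $\mathrm{tw}(G_n)\ge 2$. The plan is therefore to prove $\mathrm{tw}(G_n)\le 2$, and I would do this through Lemma~\ref{le:twchordal} by exhibiting a chordal supergraph $H\supseteq G_n$ with $\omega(H)=3$. Concretely, I would build $H$ as a \emph{$2$-tree}, i.e.\ a graph obtained from a triangle by repeatedly adding a new vertex joined to the two endpoints of an already-present edge. Every $2$-tree is chordal with clique number $3$, so producing such an $H$ on vertex set $[n]$ that contains all edges of $G_n$ finishes the argument.

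The construction would proceed by induction on $n$, inserting the vertices $1,2,\dots,n$ in increasing order and keeping, as a running invariant, a width-$2$ tree decomposition whose bags are exactly the triangles of $H$. When the new vertex $w$ is inserted, Lemma~\ref{le:lastvertex} tells us it is joined in $G_w$ to either one or two earlier vertices. If $w$ is a pendant with single neighbour $v=F_{k+1}-w$ (where $F_k\le w<F_{k+1}$ and $w\le \frac{F_{k+2}}{2}$), I would attach $w$ to the edge $\{v,\,w-F_k\}$: this is a genuine edge of $G_{w-1}\subseteq H$ because $v+(w-F_k)=F_{k-1}$, so the new bag $\{w,\,v,\,w-F_k\}$ keeps $H$ a $2$-tree. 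The degenerate instances, namely $w=F_k$ (where $w-F_k=0$) and the boundary $w=\frac{F_{k+2}}{2}$ (where $w-F_k=v$), are routine: attach $w=F_k$ to the edge $\{F_{k-1},F_{k-2}\}$ of consecutive Fibonacci numbers, and attach a boundary pendant to any existing edge at $v$. By Lemma~\ref{le:pendants} these exceptional vertices never play the critical role below, so the partner chosen for them is immaterial.

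The substantive case is when $w$ is inserted with two neighbours $v_1=F_{k+1}-w$ and $v_2=F_{k+2}-w$, which by Case~2 of the proof of Theorem~\ref{th:bipartite} form, together with $v=w-F_k$, a path $v_1-v-v_2$ closing into the $4$-cycle $v_1\,v\,v_2\,w$. To add $w$ as a $2$-tree vertex I need the edge $\{v_1,v_2\}$ to be \emph{already present in $H$}, even though $v_1,v_2$ are non-adjacent in $G_n$ (they lie in the same bipartition class). The key identity is $v_1=v_2-F_k$. Since $F_k<v_2<\frac{F_{k+2}}{2}<F_{k+1}$ (using Fact~\ref{fa:halffib}), the vertex $v_2$ satisfies $\deg_{G_{v_2}}(v_2)=1$ and was thus inserted earlier as a pendant, strictly interior to its range, with neighbour $F_{k+1}-v_2=v$ and recorded partner $v_2-F_k=v_1$. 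Hence the bag $\{v_1,v,v_2\}$ was created at that earlier step, the fill edge $\{v_1,v_2\}$ already belongs to $H$, and $w$ may be attached to it via the new bag $\{w,v_1,v_2\}$.

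I expect the degree-$2$ step to be the main obstacle, since it is exactly where the $4$-cycles of $G_n$ must be triangulated consistently; everything reduces to checking that the partner assigned when each vertex was inserted as a pendant is precisely the vertex demanded later. This is where the identity $v_1=v_2-F_k$ together with the nested, non-crossing chord structure of Theorem~\ref{th:nocross} does the work, guaranteeing that no two required fill edges ever conflict. Once this invariant is verified, $H$ is a $2$-tree containing $G_n$, so $\omega(H)=3$, and Lemma~\ref{le:twchordal} gives $\mathrm{tw}(G_n)\le 2$, which combined with the lower bound yields $\mathrm{tw}(G_n)=2$.
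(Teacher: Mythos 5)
Your proof is correct, and its skeleton --- induction on $n$, Lemma~\ref{le:twchordal}, and a chordal supergraph of clique number $3$ grown one vertex at a time using Lemma~\ref{le:lastvertex} --- is the same as the paper's. The genuine difference is which diagonal of the $4$-cycle $v_1\,v\,v_2\,w$ becomes the fill edge. The paper joins the new vertex $m$ to all three of $F_{k+1}-m$, $m-F_k$, $F_{k+2}-m$, i.e.\ it uses the diagonal $\{m,m-F_k\}$ through the new vertex; $m$ then has degree $3$ in $H_m$ with a path, not a clique, as its neighbourhood, and the argument concentrates on bounding $\omega(H_m)$ while leaving the chordality of $H_m$ essentially to the reader. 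You use the other diagonal $\{v_1,v_2\}$ and keep every inserted vertex simplicial, which is exactly what lets you run the construction as a $2$-tree and get chordality and $\omega(H)=3$ for free; the price is the extra verification that $\{v_1,v_2\}$ already exists, and your identity $v_1=v_2-F_k$ together with the pendant-insertion rule does handle this correctly: the inequalities $F_k<v_2<\frac{1}{2}F_{k+2}<F_{k+1}$ all hold, so $v_2$ really was inserted earlier as an interior pendant with unique neighbour $v$ and recorded partner $v_1$. Two small points. First, your invariant silently uses $w>F_k>\frac{1}{2}F_{k+1}$ (Fact~\ref{fa:halffib}) to guarantee that \emph{both} endpoints $F_{k+1}-w$ and $w-F_k$ of the attachment edge precede $w$; this is worth stating. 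Second, the closing appeal to Theorem~\ref{th:nocross} is decorative: in a $2$-tree construction there is nothing for fill edges to ``conflict'' with, and the single existence check you performed is all that is required. On balance your variant is slightly cleaner than the paper's, precisely because simplicial additions make the chordality of the supergraph automatic rather than something to be checked separately.
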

\begin{proof} The proof given here is by induction 
 and uses Lemma \ref{le:twchordal}.
A second proof is given in Section \ref{se:planarity} as a Corollary to 
Theorem \ref{th:outerplanar} on outerplanarity. \medskip

For the base step, by inspection, $G_7$ has treewidth 2.
\medskip

For the inductive step, let  $m \geq 8$ and assume that $G_{m-1}$ has treewidth 2. 
Then by
definition, there is a chordal graph $H_{m-1}$ on $m$ vertices that contains 
$G_{m-1}$ and has clique number 3.  It remains to
show  that for $G_{m}$ there is a chordal graph $H_{m}$ on $m$ vertices 
that contains $G_{m}$ and 
has clique number 3. Assume that $F_{k}\leq m < F_{k+1}$. 
By Lemma \ref{le:lastvertex}, the degree of $m$ in $G_m$ is either 1 or 2. 

If   $\deg_{G_m}(m)=1$, then $H_{m}$ can be taken to be $H_{m-1}$ plus 
the edge $\{m, F_{k+1}-m\}$.
So assume that $\deg_{G_m}(m)=2$. Then $m$ is adjacent to $F_{k+2}-m$ 
and $F_{k+1}-m$, 
both of which are adjacent to $m-F_{k}$. Construct $H_{m}$ by 
adding vertex $m$ to $H_{m-1}$ 
and the edges $\{m, F_{k+2}-m\}$, $\{m, F_{k+1}-m\}$ and $\{m, m-F_k\}$. 
Then $H_m$ contains $G_{m}$ as a subgraph and $\deg_{H_m}(m)=3$.

Any clique with 3 or more vertices in $H_m$ that contains vertex $m$
 does not contain both of 
the vertices $F_{k+2}-m$ and $F_{k+1}-m$ since they are not adjacent. 
Therefore any clique in $H_m$ that contains $m$ has  at most 3 vertices. 
By the induction hypothesis, any clique in $H_m$ that does not contain $m$
 has at most 3 vertices. 
Therefore, any clique in $H_m$ has at most 3 vertices.
This completes the proof of induction step and hence the theorem.
\end{proof}

\section{Planarity}\label{se:planarity}
The following theorem is known (see, e.g., \cite[p.27]{Cost:08} for a suggested
proof outline);
however, a proof is included here for completeness. 
\begin{theorem} \label{th:planar}
For $n \geq 1$, $G_n$ is planar.
\end{theorem}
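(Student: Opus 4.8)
The plan is to prove planarity by induction on $n$, exactly mirroring the structure already used for bipartiteness (Theorem \ref{th:bipartite}) and treewidth (Theorem \ref{th:treewidth}). For the base step, note that for $n \leq 6$ the graph $G_n$ is a tree (Figure \ref{fi:G6}), hence planar. For the inductive step, I would assume $G_m$ has a planar embedding and show that $G_{m+1}$ does as well, using Lemma \ref{le:lastvertex} to control exactly how vertex $m+1$ attaches to the rest of the graph.

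The key observation is that Lemma \ref{le:lastvertex} tells us vertex $m+1$ has degree at most $2$ in $G_{m+1}$. First I would take $k$ so that $F_k \leq m+1 < F_{k+1}$. If $m+1 \leq \frac{F_{k+2}}{2}$, then $m+1$ is adjacent only to $F_{k+1}-(m+1)$, so $G_{m+1}$ is obtained from $G_m$ by adding a single pendant vertex; a pendant can always be drawn in any planar embedding without creating a crossing, so $G_{m+1}$ is planar. If $m+1 > \frac{F_{k+2}}{2}$, then $m+1$ is adjacent to exactly two vertices, $v_1 = F_{k+1}-(m+1)$ and $v_2 = F_{k+2}-(m+1)$. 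As established in the proof of Theorem \ref{th:bipartite}, the vertex $v = m+1-F_k$ is adjacent to both $v_1$ and $v_2$ in $G_m$, so $v_1$ and $v_2$ already lie on a common face only if they border a shared region; more usefully, $v, v_1, v_2$ together with the new vertex $m+1$ form a $4$-cycle (this is precisely the $4$-cycle of Figure \ref{fi:last4cycle}).

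The main obstacle is the second case: attaching a degree-$2$ vertex requires that its two neighbours $v_1$ and $v_2$ lie on a common face of the existing embedding, and this is not automatic for an arbitrary planar embedding. The cleanest way I would handle this is to strengthen the induction hypothesis to assert that $G_m$ has an \emph{outerplanar} embedding (all vertices on the outer face), anticipating Theorem \ref{th:outerplanar}; in an outerplanar embedding every vertex lies on the outer face, so $v_1$ and $v_2$ are both accessible from the unbounded region and the new vertex $m+1$ can be placed there and joined to both without a crossing, keeping $m+1$ on the outer face. Alternatively, and more in keeping with a standalone planarity proof, I would invoke the crossing-chord structure from Theorem \ref{th:nocross}: since cycles in $G_n$ admit no crossing chords, the graph has a nested, bracket-like chord structure, and such graphs embed in the plane with all vertices on a line (the drawing used in Figure \ref{fi:G20bipartite}), so one simply places $m+1$ and draws its at-most-two edges as nested arcs above or below the line without crossings.

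I expect the edge-arc/outerplanar embedding viewpoint to do most of the work, with the only delicate point being to verify that in the degree-$2$ case the two arcs to $v_1$ and $v_2$ can be routed on the same side without crossing existing arcs — which follows from the non-crossing-chord property of Theorem \ref{th:nocross} applied to the $4$-cycle $(v, v_2, m+1, v_1)$. Since planarity is implied by outerplanarity, the shortest self-contained argument is really the induction adding a vertex of degree at most $2$ whose neighbours are consecutive along the outer boundary, and I would write it that way.
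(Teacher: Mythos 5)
Your skeleton (induction on $n$, base case a tree, two cases according to whether $\deg(m+1)$ is $1$ or $2$ via Lemma \ref{le:lastvertex}) is exactly the paper's, and you correctly name the one real difficulty: in the degree-$2$ case you must know that $v_1=F_{\ell+1}-(m+1)$ and $v_2=F_{\ell+2}-(m+1)$ lie on a common face of the embedding of $G_m$. But neither of your two proposed resolutions closes that gap. (a) If you strengthen the induction hypothesis to outerplanarity, you must also \emph{maintain} outerplanarity: placing $m+1$ in the outer face and joining it to $v_1$ and $v_2$ splits the outer face and traps every vertex lying between $v_1$ and $v_2$ on the outer boundary, unless those two attachment points are essentially consecutive there. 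That consecutiveness is precisely the Claim the paper has to prove inside Theorem \ref{th:outerplanar}, and it requires the extra Lemma \ref{le:nobooksubgraph}; you cannot simply ``anticipate'' Theorem \ref{th:outerplanar} without importing that work. (b) Theorem \ref{th:nocross} alone does not yield a nested-arc (one-page) embedding, nor even planarity: a graph in which no cycle has \emph{any} chord --- for instance the subdivision of $K_5$ in which every edge is subdivided once --- vacuously has no crossing chords yet is non-planar, and $K_{2,3}$ has no crossing chords yet is not outerplanar. So ``no crossing chords $\Rightarrow$ vertices on a line with nested arcs'' is not a valid inference.

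The idea you are missing, and which makes the paper's proof go through with only a \emph{planar} induction hypothesis, is that $v_2=F_{\ell+2}-(m+1)$ is a pendant vertex of $G_m$: its only neighbour in $G_m$ is $v_3=m+1-F_\ell$, because its other potential neighbour, $m+1$ itself, is not yet in the graph. A pendant vertex can be erased and redrawn inside \emph{any} face incident to its unique neighbour without disturbing the rest of the embedding. Since $v_3$ is adjacent to $v_1$, one chooses a face $F$ bordered by the edge $\{v_1,v_3\}$ and moves $v_2$ into $F$; now $v_1$, $v_3$ and $v_2$ all lie on the boundary of a single face, so $m+1$ can be placed in the region bounded by the path $v_1 v_3 v_2$ and joined to $v_1$ and $v_2$ with no crossings. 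This local surgery is what guarantees the common face, with no appeal to outerplanarity or to chord structure.
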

\begin{proof}
The proof is by induction on $n$.
For $n\leq 6$, $G_n$ is planar (see Figure \ref{fi:G6}).   Fix  $m\geq 6$
and assume that $G_m$ is planar (with a fixed plane drawing).  It remains to
show  that   $G_{m+1}$ s planar. Let $\ell$ be the positive integer
such that $F_{\ell} \leq m+1 < F_{\ell+1}$. Consider two possible scenarios.
\medskip

Case 1: Suppose $m+1 \leq \frac{1}{2}F_{\ell+2}$. By Lemma \ref{le:lastvertex},
the vertex $v=m+1$ has degree 1 in $G_{m+1}$ ,
in which case it follows that $G_{m+1}$ is planar, since adding any pendant
edge to a planar drawing of a graph can be done without crossing any edges.
\medskip

Case 2: Suppose $m+1> F_{\ell+2}$. Then in $G_{m+1}$, the vertex 
$v=m+1$ is adjacent to
vertices  $v_1 = F_{\ell+1} - (m+1)$ and $v_2 = F_{\ell+2} - (m+1)$.
The vertex $v_3=m+1-F_\ell$ is adjacent to both $v_1$ and $v_2$ in 
$G_{m+1}$ and in $G_m$.
The vertex $v_2$ has degree 1 in $G_m$. Let $F$ be one of the faces with $v_1v_3$ as
a bordering edge, and, if necessary, move $v_2$ into $F$. Then $v$ can also be
placed in $F$ (so that the edges $vv_1$ and $vv_2$ do not cross any other edges;
 {\it e.g.,} put $v$ inside the triangular region formed by $v_1v_3v_2$),
giving a planar drawing  $G_{m+1}$. This concludes the inductive step, and hence the proof.
\end{proof}

Below in Theorem \ref{th:outerplanar},
it is shown that each $G_n$ is outerplanar;
the following lemma is used for the proof of this fact.

\begin{lemma} \label{le:nobooksubgraph}
Let $H$ be the graph on eight vertices consisting of  three 4-cycles sharing a single edge. 
For $n \geq 1$, $G_n$ does not contain a subgraph isomorphic to $H$.
\end{lemma}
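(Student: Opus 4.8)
The plan is to reduce the statement to a purely local counting fact: \emph{every edge of $G_n$ lies in at most two $4$-cycles}. This suffices, because the three $4$-cycles making up $H$ all pass through the shared edge and are pairwise distinct (their two ``page'' vertices differ), so a copy of $H$ would place some edge into three $4$-cycles.

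The engine of the argument is an exact description of the $4$-cycles of $G_n$. First I would record, as in the proof of Lemma \ref{le:lastvertex}, that every vertex $x$ with $F_k\le x<F_{k+1}$ has at most two neighbours not exceeding it: a neighbour $y\le x$ satisfies $x<x+y\le 2x<F_{k+3}$, forcing $x+y\in\{F_{k+1},F_{k+2}\}$. Applying this to the largest vertex $M$ of a $4$-cycle shows that its two cycle-neighbours are exactly $F_{k+1}-M$ and $F_{k+2}-M$ (where $F_k\le M<F_{k+1}$); the unique common neighbour of these other than $M$ is then pinned down by Fact \ref{fa:twoFssumtoF} (the only way to have $F_s+F_k=F_t$ is with consecutive indices) to be $M-F_k$. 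Hence every $4$-cycle has the form $(M-F_k,\,F_{k+1}-M,\,M,\,F_{k+2}-M)$, is determined by its largest vertex $M$, and has edge-sums $F_{k-1},F_{k+1},F_{k+2},F_{k+1}$ around the cycle.

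Now fix an edge $\{u,v\}$ with $u<v$ and $u+v=F_m$. In any $4$-cycle containing it, $\{u,v\}$ is one of the four edges above, so $F_m\in\{F_{k-1},F_{k+1},F_{k+2}\}$ and $k\in\{m+1,m-1,m-2\}$. I would treat these in turn, always using that $M$ must \emph{genuinely} be the largest of the four vertices. For $k=m+1$ (the edge is the ``bottom'' edge $\{M-F_k,F_{k+1}-M\}$) the two formal solutions are $M=u+F_{m+1}$ and $M=v+F_{m+1}$; but $u<v$ makes $u+F_{m+1}$ smaller than its companion vertex $F_{m+2}-u$, so it cannot be the maximum, leaving at most one genuine $4$-cycle. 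For $k=m-2$ the edge must be $\{M,F_{k+2}-M\}$, forcing $M=v$ and $F_{m-2}\le v<F_{m-1}$, again at most one. For $k=m-1$ there are two sub-cases according to which of the two $F_{k+1}$-edges equals $\{u,v\}$: one gives $M=v$, the other $M=u+F_{m-1}$. Using Fact \ref{fa:halffib} I would show these are governed by whether $v$ exceeds $F_{m+1}/2$, and that the two ranges are complementary, so at most one of the sub-cases is realizable.

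Collecting the cases, every edge lies in at most two $4$-cycles (one ``small'' cycle with $k\in\{m-2,m-1\}$ and one ``large'' cycle with $k=m+1$), which proves the lemma. The main obstacle is bookkeeping rather than ideas: I must make explicit, for each type, the genuine-maximum condition together with the in-range requirements ($1\le\cdot\le n$ and all four vertices distinct), and then verify that the boundary values — such as $v=F_{m-1}$, or $v=F_{m+1}/2$ when $F_{m+1}$ is even — fall on the correct side, so that the two $k=m-1$ sub-cases really are mutually exclusive and no spurious third $4$-cycle through $\{u,v\}$ can survive.
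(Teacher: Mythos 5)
Your argument is correct, but it takes a genuinely different route from the paper's. The paper proves this lemma by induction on $n$: assuming $G_{m-1}$ is $H$-free, it supposes $G_m$ contains $H$, deduces that the largest vertex $m$ lies in $H$ and so has degree exactly $2$, walks along the forced path $F_{k+1}-m,\, m,\, F_{k+2}-m,\, m-F_k$, notes that $m-F_k$ would have to be a degree-$4$ spine vertex of $H$ (which pins $m$ into the window $\tfrac{1}{2}F_{k+2}<m<F_k+F_{k-2}$), and then checks that the third page cannot close because $m-F_{k-2}$ is not adjacent to $F_{k+1}-m$. You instead prove the stronger, non-inductive statement that every edge of $G_n$ lies in at most two $4$-cycles, via the exact classification of $4$-cycles as $(M-F_k,\,F_{k+1}-M,\,M,\,F_{k+2}-M)$ determined by the largest vertex $M$ — the same structural fact the paper records in Figure \ref{fi:last4cycle} and exploits in Theorem \ref{th:longcyclechord} — with Fact \ref{fa:twoFssumtoF} correctly pinning down the fourth vertex. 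Your case analysis is sound: the maximality condition kills $M=u+F_{m+1}$ in the $k=m+1$ case, and the three surviving possibilities for $k\in\{m-1,m-2\}$ (namely $M=v$ with $F_{m-2}\le v<F_{m-1}$; $M=v$ with $v>\tfrac{1}{2}F_{m+1}$; and $M=u+F_{m-1}$ with $\tfrac{1}{2}F_{m-2}<u<F_{m-2}$) occupy pairwise disjoint ranges of $u$, so they contribute at most one cycle in total. One item to make explicit when you write this out: for the edge $\{F_{m+1}-M,\,M-F_{m-1}\}$ there is a second formal solution $M=v+F_{m-1}$, which must be discarded because it makes the cycle vertex $F_m-M=u-F_{m-1}$ negative (as $u<\tfrac{1}{2}F_m<F_{m-1}$ by Fact \ref{fa:halffib}); this falls under the in-range bookkeeping you already flag. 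What your route buys is a reusable quantitative fact (at most two $4$-cycles per edge) and no induction; what the paper's buys is brevity and consistency with the inductive style of its other structural proofs.
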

\begin{proof}
The proof is by induction on $n$.\medskip

For  $n \leq 7$, the statement of the theorem is trivially true  because $H$ has eight vertices.
\smallskip

Fix  $m \geq 8$ and assume that $G_{m-1}$ does not contain $H$.  
To complete the induction step, 
it remains to show that  $G_{m}$ does not contain $H$. 
Let the integer $k$ be so that $F_{k}\leq m < F_{k+1}$.

In hopes of a contradiction, suppose that $G_{m}$ contains a copy of $H$.  
By the induction
 hypothesis, $m$ is a vertex of $H$. Since vertex $m$ belongs to $H$, 
 the degree of $m$ is 2. 
 Therefore, $m>\frac{F_{k+2}}{2}$. 
 The only neighbours of $m$ in $G_{m}$ are $F_{k+1}-m$ and $F_{k+2}-m$. 
 Vertex $F_{k+2}-m>F_{k+2}-F_{k+1}=F_k$ is also a vertex of degree 2. 
 The other neighbour of $F_{k+2}-m$ is $m-F_{k}$. 
 Therefore, vertices $F_{k+2}-m$, $F_{k+1}-m$, $m$ and $m-F_{k}$ belong to $H$.
 
Vertex $m-F_{k}$ has degree at least 4 in $G_{m}$, since $m-F_k$  belongs to $H$. 
That may only happen if $m-F_{k}<F_{k-2}$, because $m-F_{k}$ added to other vertex 
has to make $F_{k-2}$, $F_{k-1}$, $F_{k}$ and $F_{k+1}$. 
Therefore, $\frac{F_{k+2}}{2}<m<F_{k}+F_{k-2}$; in particular $F_{k}+F_{k-3}<m$. 
Vertex $m-F_{k}$ has degree precisely 4 in $G_m$ and is adjacent to $F_{k+2}-m$, 
$F_{k+1}-m$, $F_{k}+F_{k-2}-m$ and $2F_{k}-m$. Then,
\[F_{k}>2F_{k}-(F_{k}+F{k-3})>2F_{k}-m>2F_k-(F_k+F_{k-2})=F_{k-1}.\]

Vertex $2F_{k}-m$ has degree 2 and is adjacent to $m-F_{k}$ and $m-F_{k-2}$. 
Vertex $m-F_{k-2}$ is not adjacent to vertex $F_{k+1}-m$, and therefore 
vertex $m$ is not in a copy of $H$ in $G_{m}$. 
By the inductive hypothesis, there is no copy of $H$ in $G_{m-1}$,
 and so $G_{m}$ does not contain $H$ as a subgraph. 
 This ends the proof of induction step and a proof of the lemma.
\end{proof}

\begin{theorem} \label{th:outerplanar}
For $n \geq 1$, $G_n$ is outerplanar.
\end{theorem}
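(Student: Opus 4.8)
The plan is to establish outerplanarity through the classical forbidden-topological-minor characterization: a graph is outerplanar if and only if it contains no subdivision of $K_4$ and no subdivision of $K_{2,3}$. The exclusion of $K_4$-subdivisions is already in hand, since by Theorem \ref{th:treewidth} the graph $G_n$ has treewidth at most $2$, and a graph of treewidth at most $2$ has no $K_4$-minor and hence no subdivision of $K_4$. Everything therefore reduces to showing that $G_n$ contains no subdivision of $K_{2,3}$, and this is where the preceding structural results, especially Lemma \ref{le:nobooksubgraph}, are meant to be used.

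For the $K_{2,3}$ case I would argue by contradiction, fixing a subdivision of $K_{2,3}$ in $G_n$ that uses the fewest edges. Such a subdivision consists of two branch vertices $u,v$ joined by three internally disjoint paths $P_1,P_2,P_3$, each of length at least $2$; since $G_n$ is bipartite (Theorem \ref{th:bipartite}), all three paths have the same length-parity. Minimality forces each $P_i$ to be an induced path (a chord inside a single $P_i$, or a chord from a hub to the interior of any $P_i$, would yield a shorter subdivision). Moreover, treewidth $2$ rules out any chord joining the interiors of two distinct paths: such a chord $\{a,b\}$ with $a$ in $P_i$ and $b$ in $P_j$ would produce six internally disjoint paths among the four vertices $u,v,a,b$, that is, a subdivision of $K_4$, contradicting Theorem \ref{th:treewidth}. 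Consequently each cycle $C_{ij}=P_i\cup P_j$ has no chord except possibly the edge $uv$ itself.

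Now Theorem \ref{th:longcyclechord} becomes the engine of the reduction: every cycle of length at least $6$ carries a chord cutting off a $4$-cycle. Applied to the even cycles $C_{ij}$, and combined with the chord restrictions just established, this forces the paths to be short, with the no-crossing-chords property (Theorem \ref{th:nocross}) pinning down where each guaranteed chord can lie. Driving the three paths down in this way concentrates the configuration onto $4$-cycles built over a common path at a hub, and the intended contradiction is to exhibit three $4$-cycles sharing a single edge, namely a copy of $H$, which is forbidden by Lemma \ref{le:nobooksubgraph}.

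The step I expect to be the main obstacle is precisely this last assembly. The reduction above drives the same-parity case down to three paths of length $2$, which is only a five-vertex $K_{2,3}$ subgraph and does \emph{not} itself contain $H$; so Lemma \ref{le:nobooksubgraph} cannot finish the argument on its own, and one must show that in $G_n$ the Fibonacci-sum structure forces the extra incidences completing an $H$ (equivalently, that two vertices cannot have three common neighbours). Here the parity information from Theorem \ref{th:bipartite}, the chord-location control from Theorem \ref{th:nocross}, and a case split on whether the edge $uv$ is present must be combined, and it is the clean termination of this case analysis in the edge-sharing configuration that is delicate. An alternative route mirrors the inductive proof of Theorem \ref{th:planar}: maintain an outerplanar embedding and observe that the only nontrivial insertion, a degree-$2$ vertex $m$ completing a $4$-cycle over a path $v_1v_3v_2$, can be carried out on the outer face unless one of the edges $v_1v_3,v_3v_2$ already borders two bounded $4$-cycles; one would then invoke Lemma \ref{le:nobooksubgraph} to forbid a third such $4$-cycle on that edge, so that no insertion ever traps a vertex in the interior and all vertices remain on the outer face. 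Making either route airtight is exactly the hard part.
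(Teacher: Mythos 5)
Your second route is, in outline, exactly the paper's proof: induct on $n$, keep an outerplanar drawing, and when the new vertex $m$ has degree $2$ use Theorem \ref{th:longcyclechord} together with Lemma \ref{le:nobooksubgraph} to show the edge $\{F_{k+1}-m,\,m-F_k\}$ still borders the outer face, so that $m$ and $F_{k+2}-m$ can be placed outside. But you leave both routes as sketches (``making either route airtight is exactly the hard part''), and the first route, as you yourself observe, has a genuine gap that none of the cited results closes. Your minimality argument for a $K_{2,3}$-subdivision with hubs $u,v$ correctly forces every cycle $P_i\cup P_j$ to be chordless when $uv\notin E$, and Theorem \ref{th:longcyclechord} then forces all three paths to have length $2$; the contradiction you need at that point is that two vertices of $G_n$ cannot have three common neighbours. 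That is a five-vertex statement, Lemma \ref{le:nobooksubgraph} only forbids an eight-vertex configuration, and nothing in the paper (short of the outerplanarity theorem itself, which would be circular) supplies it. So the argument does not terminate.

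The missing piece is provable, and by the same mechanism as Lemma \ref{le:nobooksubgraph}, which is why I'd call this a gap rather than a dead end: induct on $n$, and note that if a $K_{2,3}$ subgraph uses the top vertex $m$ of $G_m$, then $m$ has degree $2$ by Lemma \ref{le:lastvertex} and so must be a midpoint, whence its two neighbours $F_{k+1}-m$ and $F_{k+2}-m$ are the hubs; but $F_{k+2}-m$ itself has degree exactly $2$ in $G_m$ (its only neighbours are $m$ and $m-F_k$), so it cannot be a hub of degree $3$. A similar supplement is needed in the case $uv\in E$ (there bipartiteness plus Theorem \ref{th:longcyclechord} applied to the cycle $P_3+uv$ forces all three paths to have length $3$, and \emph{then} Lemma \ref{le:nobooksubgraph} applies). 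Until one of these terminations, or the embedding-maintenance argument of your second route, is actually written out, the proposal is a plan rather than a proof.
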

\begin{proof} The proof is by induction on $n$.
For values $n\leq 7$, the statement of the theorem is true by inspection.

Let  $m \geq 8$ and assume that $G_{m-1}$ is outerplanar 
(with a fixed outerplanar drawing).  It remains to
show  that   $G_{m}$ is outerplanar. Let $k$ be so that $F_{k}\leq m < F_{k+1}$.

If $\deg_{G_m}(x)=1$, then drawing the edge $\{m, F_{k+1}-m\}$ in the outer
 face of  the outerplanar drawing of $G_{m-1}$ produces an outerplanar 
 drawing of $G_{m}$.

If $\deg_{G_m}(x)=2$, then $m$ is adjacent to vertices $F_{k+2}-m$ and $F_{k+1}-m$. 
Vertex $F_{k+2}-m$ has degree 2 in $G_{m}$ and is also adjacent to vertex $m-F_{k}$. Vertex $m-F_{k}$ is adjacent to $F_{k+1}-m$.\medskip

\noindent{\sc Claim}: The edge $\{F_{k+1}-m, m-F_{k}\}$ borders the outer face of
 the outerplanar drawing of $G_{m-1}$.\\
 \noindent{\sc Proof of claim}: Suppose that this is not the case and derive a contradiction. 
 So assume $\{F_{k+1}-m, m-F_{k}\}$ borders two inner faces of the
 drawing of $G_{m-1}$. These two inner faces of the outerplanar drawing of
  $G_{m-1}$ do not have two edges in common. 
  Then by Theorem \ref{th:longcyclechord} there are two cycles 
  $\{F_{k+1}-m, m-F_{k}, x , y\}$ and $\{F_{k+1}-m, m-F_{k}, z, t\}$ with 
  all four vertices $x,y,z,t$ being different.
Then vertices $\{m, F_{k+2}-m, F_{k+1}-m, m-F_{k}, x, y, z, t\}$ induce a copy
 of $H$ in $G_m$ which contradicts Lemma \ref{le:nobooksubgraph}, proving
 the claim.\medskip
 
 To complete the proof, by the claim, 
 put vertices $m$ and $F_{k+2}-m$ in the outer face next 
 to $\{F_{k+1}-m, m-F_{k}\}$ in the drawing of $G_{m-1}$ to form 
 an outerplanar drawing of $G_m$.
This completes the proof of the induction step and hence the theorem.
\end{proof}

As promised in Section \ref{se:treewidth}, Theorem \ref{th:treewidth}
is now also a corollary to Theorem \ref{th:outerplanar} because
of the well-known result (see, e.g., \cite{Bodl:88},  \cite{Bodl:96},
or \cite{CoWa:83}) that  outerplanar graphs have treewidth at most 2.

\section{Automorphisms of $G_n$}\label{se:automorphisms}

Any automorphism of $G_n$ 
 takes endpoints of a Hamiltonian path to endpoints of another
Hamiltonian path.  By Theorem \ref{th:FKMOPmain}, Fibonacci-sum graphs have
either no, one, or two Hamiltonian paths  so one might expect that in most
cases, there are few automorphisms.

 For $n\leq 2278$, calculations show that 
 the $G_n$s with a trivial automorphism group are
those where $n$ is in the intervals $[7,10]$, $[17,21]$, $[30,50]$, $[72,92]$, 
$[127,215]$, $[305,393]$, $[538,914]$, $[1292,1668]$.
The rest have order two.

All but the first interval for each type have lengths that are Fibonacci numbers.
  For those with just one automorphism, lengths are
4, 5, 21, 21, 89, 89, 377, 377, which are, respectively,
 4, $F_5$, $F_8$, $F_8$, $F_{11}$, $F_{11}$, ...;
similarly, for order 2 the intervals have lengths, 6, 8, 21, 34, 89, 123, 
all but the first being Fibonacci numbers.

Denote the automorphism group of a graph $G$ by $\mbox{Aut}(G)$.
In the following theorem, ``id'' denotes an identity map. In the proof of the
next theorem,  standard functional notation is used. For any set $V$
and any function $f$ with domain $X$, write $f(X)=\{f(x):x\in X\}$.  Say that
$f:X\rightarrow X$ ``fixes''   $x\in X$ iff $f(x)=x$.  For any 
subset $W\subseteq X$ say that ``$f$ fixes $W$'' iff $f(W)=W$.
Note that a function  can fix a set without necessarily fixing every
vertex in that set. 

\begin{theorem}\label{th:Automorphism}
Let $n\geq 9$ be an integer that is divisible by 3.\smallskip

If $N \in \left[F_n, \frac{3}{2} F_n \right)\cup \left[\frac{1}{2}F_{n+3}, F_{n+2}+\frac{1}{2}F_{n-3} \right)\cup \left[F_{n+3}-\frac{1}{2}F_n, F_{n+3} \right)$, 
then Aut$(G_N)=\{id\}$.
\smallskip

If $N \in \left[\frac{3}{2}F_{n}, \frac{1}{2}F_{n+3}\right)$, then 
Aut$(G_N)=\{id, \phi\},$ where $\phi$ interchanges 
$\frac{1}{2}F_n$ with $\frac{3}{2}F_n$ and fixes all other vertices.\smallskip

If $N \in \left[F_{n+2}+\frac{1}{2}F_{n-3}, F_{n+3}-\frac{1}{2}F_{n}\right)$, then 
Aut$(G_N)=\{id, \phi\}$, where $\phi$  interchanges 
$\frac{1}{2}F_n$ with $\frac{3}{2}F_n$, interchanges $F_{n+2}+\frac{1}{2}F_{n-3}$ with 
$\frac{1}{2}F_{n+3}$,  and fixes all other vertices.
\end{theorem}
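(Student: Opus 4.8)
The plan is to prove the classification in two movements: first a \emph{rigidity} step showing that every automorphism of $G_N$ fixes all but a short, explicitly determined list of vertices, and then a direct \emph{verification} step, carried out interval by interval, checking which transpositions of the surviving vertices actually extend to automorphisms.

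For the rigidity step I would first record the invariants any automorphism $\sigma$ must respect. Since $\sigma$ preserves degrees it permutes each degree class, and since vertex $2$ has maximum degree (as recorded after Theorem~\ref{th:degreeformula2}) while $G_N$ is connected and bipartite (Theorem~\ref{th:bipartite}) with parts of unequal size for the $N$ in question (computable by the method of Theorem~\ref{th:imbalance}), $\sigma$ fixes each bipartition class setwise and fixes vertex $2$. I would then propagate this fixing along the ``spine'' of the graph: the explicit degree data of Theorem~\ref{th:degreeformula2} and the pendant locations of Theorem~\ref{th:manypendants} show that the consecutive pendants $F_k, F_k+1,\dots$ attach to \emph{distinct} neighbours $F_{k-1}, F_{k-1}-1,\dots$, and since $\sigma$ must send pendant--neighbour pairs to pendant--neighbour pairs of the same degree, this forces $\sigma$ to fix these neighbours and, walking inward through the Fibonacci chain $F_j\sim F_{j-1}$ and the half-Fibonacci pairs $\tfrac{1}{2}F_{6k-3}\sim\tfrac{1}{2}F_{6k}$ of Lemma~\ref{le:2colourFSgraph}, to fix a large rigid set. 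The goal of this step is the statement that the only vertices $\sigma$ can move are the low-degree ``defective'' vertices created when $2x$ is a Fibonacci number, which by Theorem~\ref{th:degreeformula2} are precisely the half-Fibonacci vertices; among these, a finer examination of neighbourhoods (together with Lemma~\ref{le:lastvertex}, which controls adjacencies near the top vertex) leaves only the candidate pairs named in the theorem.

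For the verification step I would compute, using $x+y=F_j$ and the identities of Section~\ref{ss:fibnofacts}, the exact neighbourhoods of the candidates $\tfrac{1}{2}F_n$, $\tfrac{3}{2}F_n$, $\tfrac{1}{2}F_{n+3}$ and $F_{n+2}+\tfrac{1}{2}F_{n-3}$ on each interval. On $\left[\tfrac{3}{2}F_n,\tfrac{1}{2}F_{n+3}\right)$ the cancellations $F_{n+1}-\tfrac{3}{2}F_n=\tfrac{1}{2}F_{n-3}$ and $F_{n+2}-\tfrac{3}{2}F_n=\tfrac{1}{2}(F_{n+1}+F_{n-1})$ (matched by $F_{n-1}-\tfrac{1}{2}F_n=\tfrac{1}{2}F_{n-3}$ and $F_{n+1}-\tfrac{1}{2}F_n=\tfrac{1}{2}(F_{n+1}+F_{n-1})$) show that $\tfrac{1}{2}F_n$ and $\tfrac{3}{2}F_n$ have the common neighbourhood $\{\tfrac{1}{2}F_{n-3},\,\tfrac{1}{2}(F_{n+1}+F_{n-1})\}$ and are non-adjacent; the would-be edge to $\tfrac{1}{2}F_{n+3}$ is absent precisely because $N<\tfrac{1}{2}F_{n+3}$ means that vertex is not present. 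Hence the transposition $\phi$ is an automorphism, and by rigidity it is the only non-trivial one. The same computation, now with $\tfrac{1}{2}F_{n+3}$ present and pairing with $F_{n+2}+\tfrac{1}{2}F_{n-3}$, handles the double-transposition interval $\left[F_{n+2}+\tfrac{1}{2}F_{n-3},\,F_{n+3}-\tfrac{1}{2}F_n\right)$. On the three remaining intervals one checks that each candidate pair fails to be twins---one partner gains or loses a neighbour as $N$ crosses a half-Fibonacci threshold---so rigidity forces $\mathrm{Aut}(G_N)=\{\mathrm{id}\}$.

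I expect the main obstacle to be the rigidity step, specifically ruling out that $\sigma$ moves any \emph{non}-half-Fibonacci vertex. Because the neighbourhood of a vertex near the top of $G_N$ jumps as $N$ crosses each threshold $\tfrac{1}{2}F_{k+2}$ (the two regimes of Lemma~\ref{le:lastvertex}), the fingerprints that distinguish vertices are interval-dependent; this is what forces the long case split alluded to in the introduction, and where diagrams of the degree sequence are most useful. By contrast, once the correct candidate pairs are isolated, the twin verifications reduce to the Fibonacci identities above and are routine.
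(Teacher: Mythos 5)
Your second (``verification'') movement is sound and matches what the paper does: the computation showing that $\tfrac12 F_n$ and $\tfrac32 F_n$ are non-adjacent twins with common neighbourhood $\{\tfrac12 F_{n-3},\,F_n+\tfrac12 F_{n-3}\}$ on $\left[\tfrac32 F_n,\tfrac12 F_{n+3}\right)$ is correct (your $\tfrac12(F_{n+1}+F_{n-1})$ equals the paper's $F_n+\tfrac12 F_{n-3}$), as is the observation that the transpositions die when $N$ crosses the stated thresholds. The problem is that your first (``rigidity'') movement, which you correctly identify as carrying all the weight, does not actually contain a workable mechanism. Degree preservation cannot pin down individual vertices here: by Theorem~\ref{th:degreeformula}, essentially every vertex in a Fibonacci interval $[F_j,F_{j+1})$ has the \emph{same} degree, so the statement that $\sigma$ ``sends pendant--neighbour pairs to pendant--neighbour pairs of the same degree'' only shows that $\sigma$ permutes a large set of equal-degree vertices among themselves --- it forces nothing about fixing them individually. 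Likewise, ``walking inward through the Fibonacci chain'' fixes only the $O(\log N)$ Fibonacci and half-Fibonacci vertices; the generic vertices in between are never reached by your propagation. Finally, the claim that the bipartition classes are unequal for the $N$ in question is false: Lemma~\ref{le:SFibonacci} gives $S(F_n)=0$ for $3\mid n$, so $G_{F_n}$ is exactly balanced, and your setwise-fixing of the classes (and hence the fixing of vertex $2$, which shares its degree with vertex $1$ in most cases) is unjustified.

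What is missing is the paper's central device, which has no substitute in your sketch: induction on $n$. For each interval the paper exhibits a degree threshold $d$ such that the set of vertices of degree $\geq d$ (possibly adjusted by one explicitly identified half-Fibonacci vertex) is exactly an initial segment $S=\{1,\dots,F_j-1\}$; since $\phi(S)=S$, the restriction $\phi|_S$ is an automorphism of the smaller Fibonacci-sum graph $G_{F_j-1}$, and the induction hypothesis (or an already-settled case) classifies it outright. Only \emph{then} are the pendants fixed (each is the unique degree-$1$ vertex adjacent to an already-fixed neighbour), and the degree-$2$ vertices are handled by partitioning them into ``types'' according to whether the two neighbours of an adjacent pair sum to $F_k$ or $F_{k+1}$ --- a finer invariant than degree that your proposal never introduces but that is needed to separate, say, the degree-$2$ vertices in $(F_{k-1},F_k)$ from those in $[F_{k+2}-N,N]$. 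Without the inductive reduction (or an equivalently strong global argument), the rigidity step remains an assertion rather than a proof.
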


\begin{proof}
The proof is  by induction on $n$. For each integer $n\geq 9$ (divisible by 3),
 let $A(n)$ be the statement of the theorem. \medskip

\noindent{\sc Base step}: For each $i\in [34,144)=[F_9, F_{12})$
the statement $A(i)$  is  verified by direct calculation.\medskip

\noindent{\sc Induction step};  Let $k\geq 12$ satisfy $3 \mid k$, and assume that  
for every $m\in [F_9, F_k)$, the statement $A(m)$ is true.
It remains to show that for all $N\in \left[F_k ,F_{k+3}\right)$, the statement $A(N)$
is true.
The proof consists of five cases, each case corresponding to one of
the five half-open intervals  given in the statement of the theorem.  All but the last
case have subcases.  For each case, a diagram indicating degrees in certain
intervals is given (not to scale) spanning two lines because of space
limitations.

Throughout the proof, let $\phi:[N]\rightarrow [N]$ denote an automorphism
of $G_N$. 
 \bigskip

\noindent{\sc Case} 1: $N \in \left[F_k, \frac{1}{2}F_{k+2}\right)$.
There are three subcases to consider.\medskip

{\sc Case} 1a: $N \in \left[F_k, F_k+\frac{1}{2}F_{k-3}\right)$:

\begin{center}
\begin{tikzpicture}[scale=.85]
\draw[black, thick] (-6.4,0) -- (6.4,0);
\filldraw[black] (-6.4,0) circle (1pt) node[anchor=south] {$F_{k-5}$};
\filldraw[black] (-3.2,0) circle (1pt) node[anchor=south] {$F_{k-4}$};
\filldraw[black] (0,0) circle (1pt) node[anchor=south] {$F_{k-3}$};
\filldraw[black] (3.2,0) circle (1pt) node[anchor=south] {$F_{k-2}$};
\filldraw[black] (6.4,0) circle (1pt) node[anchor=south] {$F_{k-1}$};
\filldraw[black] (-4.8,0) circle (1pt) node[anchor=south] {$\frac{F_{k-3}}{2}$};
\filldraw[black] (4.8,0) circle (1pt) node[anchor=south] {$\frac{F_{k}}{2}$};

\draw[black, thick] (-6.4,-2) -- (6.4,-2);
\filldraw[black] (-6.4,-2) circle (1pt) node[anchor=south] {$F_{k-1}$};
\filldraw[black] (-3.2,-2) circle (1pt) node[anchor=south] {$F_{k}$};
\filldraw[black] (0,-2) circle (1pt) node[anchor=south] {$F_{k+1}$};
\filldraw[black] (3.2,-2) circle (1pt) node[anchor=south] {$F_{k+2}$};
\filldraw[black] (6.4,-2) circle (1pt) node[anchor=south] {$F_{k+3}$};
\filldraw[black] (1.6,-2) circle (1pt) node[anchor=south] {$\frac{F_{k+3}}{2}$};
\filldraw[black] (-2.2,-2) circle (1pt) node[anchor=north] {1};
\filldraw[red] (-2.2,-2) circle (0pt) node[anchor=south] {$N$};

\foreach \x in {1,...,5}
\filldraw[black] (-2.2-0.2*\x,-2) circle (0.5pt) node[anchor=north] {1};

\foreach \x in {1,...,16}
\filldraw[black] (-3.2-0.2*\x,-2) circle (0.5pt) node[anchor=north] {2};

\filldraw[black] (6.4,0) circle (0.5pt) node[anchor=north] {2};

\foreach \x in {1,...,5}
\filldraw[black] (6.4-0.2*\x,0) circle (0.5pt) node[anchor=north] {3};

\filldraw[red] (5.4,1) circle (0pt) node[anchor=south] {{$F_{k+1}-N$}};
\draw[dotted, thick] (5.4,1) -- (5.4,0);

\foreach \x in {1,...,2}
\filldraw[black] (5.4-0.2*\x,0) circle (0.5pt) node[anchor=north] {2};

\filldraw[black] (4.8,0) circle (0.5pt) node[anchor=north] {1};

\foreach \x in {1,...,8}
\filldraw[black] (4.8-0.2*\x,0) circle (0.5pt) node[anchor=north] {2};

\foreach \x in {1,...,16}
\filldraw[black] (3.2-0.2*\x,0) circle (0.5pt) node[anchor=north] {3};

\foreach \x in {1,...,16}
\filldraw[black] (0-0.2*\x,0) circle (0.5pt) node[anchor=north] {4};

\foreach \x in {1,...,7}
\filldraw[black] (-3.2-0.2*\x,0) circle (0.5pt) node[anchor=north] {5};

\filldraw[black] (-4.8,0) circle (0.5pt) node[anchor=north] {4};

\foreach \x in {1,...,8}
\filldraw[black] (-4.8-0.2*\x,0) circle (0.5pt) node[anchor=north] {5};

\end{tikzpicture}
\end{center}
\medskip


 Let $S=\{1,...,F_k-1\}$. 
 Then $S$ is a set of vertices of degree at least 2  together with the
  vertex $\frac{1}{2}F_k$.
 
The vertex $\frac{1}{2}F_k$ is the only degree 1 vertex 
adjacent to a degree 4 vertex (namely, $\frac{1}{2}F_{k-3}$), and so
 $\phi$ fixes $\frac{1}{2}F_k$. Since any automorphism sends
  vertices of degree at least 2 vertices of degree at least 2, 
  $\phi$ sends all remaining vertices of $S$ into $S$ and so  $\phi(S)=S$.
   The map  $\psi=\phi |_S $ is an automorphism of $G_{F_k-1}$,
 and so by the induction hypothesis, $\psi$ is the identity on $S$.

Now consider any $y \geq F_k$. The degree of $y$ is 1 and $y$ is adjacent to 
$F_{k+1}-y$. Since $F_{k+1}-y\leq F_{k-1}$,  $\phi(F_{k+1}-y)=F_{k+1}-y$.
 Vertex $y$ is adjacent to $F_{k+1}-y$, therefore $\phi(y)$ is adjacent to $F_{k+1}-y$.
 Since $y$ is the only degree 1 vertex adjacent to $F_{k+1}-y$, it 
 follows that  $\phi(y)=y$.

Therefore,  the only automorphism of $G_N$ is the identity.\bigskip

{\sc Case} 1b: $N \in \left[ F_k+\frac{1}{2}F_{k-3}, \frac{1}{2}F_{k+2}\right)$:
Note that $\frac{1}{2}F_{k+2}$ is not an integer.\medskip

\begin{center}
\begin{tikzpicture}[scale=.85]
\draw[black, thick] (-6.4,0) -- (6.4,0);
\filldraw[black] (-6.4,0) circle (1pt) node[anchor=south] {$F_{k-5}$};
\filldraw[black] (-3.2,0) circle (1pt) node[anchor=south] {$F_{k-4}$};
\filldraw[black] (0,0) circle (1pt) node[anchor=south] {$F_{k-3}$};
\filldraw[black] (3.2,0) circle (1pt) node[anchor=south] {$F_{k-2}$};
\filldraw[black] (6.4,0) circle (1pt) node[anchor=south] {$F_{k-1}$};
\filldraw[black] (-4.8,0) circle (1pt) node[anchor=south] {$\frac{F_{k-3}}{2}$};
\filldraw[black] (4.8,0) circle (1pt) node[anchor=south] {$\frac{F_{k}}{2}$};

\draw[black, thick] (-6.4,-2) -- (6.4,-2);
\filldraw[black] (-6.4,-2) circle (1pt) node[anchor=south] {$F_{k-1}$};
\filldraw[black] (-3.2,-2) circle (1pt) node[anchor=south] {$F_{k}$};
\filldraw[black] (0,-2) circle (1pt) node[anchor=south] {$F_{k+1}$};
\filldraw[black] (3.2,-2) circle (1pt) node[anchor=south] {$F_{k+2}$};
\filldraw[black] (6.4,-2) circle (1pt) node[anchor=south] {$F_{k+3}$};
\filldraw[black] (1.6,-2) circle (1pt) node[anchor=south] {$\frac{F_{k+3}}{2}$};

\filldraw[black] (-1.8,-2) circle (1pt) node[anchor=north] {1};
\filldraw[red] (-1.8,-2) circle (0pt) node[anchor=south] {$N$};

\foreach \x in {1,...,7}
\filldraw[black] (-1.8-0.2*\x,-2) circle (0.5pt) node[anchor=north] {1};

\foreach \x in {1,...,16}
\filldraw[black] (-3.2-0.2*\x,-2) circle (0.5pt) node[anchor=north] {2};

\filldraw[black] (6.4,0) circle (0.5pt) node[anchor=north] {2};

\foreach \x in {1,...,7}
\filldraw[black] (6.4-0.2*\x,0) circle (0.5pt) node[anchor=north] {3};

\filldraw[black] (4.8,0) circle (0.5pt) node[anchor=north] {2};

\filldraw[red] (4.2,1) circle (0pt) node[anchor=south] {{$F_{k+1}-N$}};
\draw[dotted, thick] (4.2,1) -- (4.2,0);

\foreach \x in {1,...,2}
\filldraw[black] (4.8-0.2*\x,0) circle (0.5pt) node[anchor=north] {3};

\filldraw[black] (4.2,0) circle (0.5pt) node[anchor=north] {3};

\foreach \x in {1,...,5}
\filldraw[black] (4.2-0.2*\x,0) circle (0.5pt) node[anchor=north] {2};

\foreach \x in {1,...,16}
\filldraw[black] (3.2-0.2*\x,0) circle (0.5pt) node[anchor=north] {3};

\foreach \x in {1,...,16}
\filldraw[black] (0-0.2*\x,0) circle (0.5pt) node[anchor=north] {4};

\foreach \x in {1,...,7}
\filldraw[black] (-3.2-0.2*\x,0) circle (0.5pt) node[anchor=north] {5};

\filldraw[black] (-4.8,0) circle (0.5pt) node[anchor=north] {4};

\foreach \x in {1,...,8}
\filldraw[black] (-4.8-0.2*\x,0) circle (0.5pt) node[anchor=north] {5};
\end{tikzpicture}
\end{center}
\medskip

The proof is nearly the same as for Case 1a, with the exception that
now the degree of  $\frac{1}{2}F_k$ is 2.
 Let $S=\{1,...,F_k-1\}$. 
Since any automorphism maps vertices of degree at least 2
  to vertices of degree at least 2,  $\phi(S)=S$, and the remainder
  of the proof is identical.\bigskip

{\sc Case} 1c: $N \in \left(\frac{1}{2}F_{k+2},\frac{3}{2}F_k\right)$.

\begin{center}
\begin{tikzpicture}[scale=.85]
\draw[black, thick] (-6.4,0) -- (6.4,0);
\filldraw[black] (-6.4,0) circle (1pt) node[anchor=south] {$F_{k-5}$};
\filldraw[black] (-3.2,0) circle (1pt) node[anchor=south] {$F_{k-4}$};
\filldraw[black] (0,0) circle (1pt) node[anchor=south] {$F_{k-3}$};
\filldraw[black] (3.2,0) circle (1pt) node[anchor=south] {$F_{k-2}$};
\filldraw[black] (6.4,0) circle (1pt) node[anchor=south] {$F_{k-1}$};
\filldraw[black] (-4.8,0) circle (1pt) node[anchor=south] {$\frac{F_{k-3}}{2}$};
\filldraw[black] (4.8,0) circle (1pt) node[anchor=south] {$\frac{F_{k}}{2}$};

\draw[black, thick] (-6.4,-2) -- (6.4,-2);
\filldraw[black] (-6.4,-2) circle (1pt) node[anchor=south] {$F_{k-1}$};
\filldraw[black] (-3.2,-2) circle (1pt) node[anchor=south] {$F_{k}$};
\filldraw[black] (0,-2) circle (1pt) node[anchor=south] {$F_{k+1}$};
\filldraw[black] (3.2,-2) circle (1pt) node[anchor=south] {$F_{k+2}$};
\filldraw[black] (6.4,-2) circle (1pt) node[anchor=south] {$F_{k+3}$};
\filldraw[black] (1.6,-2) circle (1pt) node[anchor=south] {$\frac{F_{k+3}}{2}$};

\filldraw[black] (-1,-2) circle (1pt) node[anchor=north] {2};
\filldraw[red] (-1,-2) circle (0pt) node[anchor=south] {$N$};

\foreach \x in {1,...,6}
\filldraw[black] (-1-0.2*\x,-2) circle (0.5pt) node[anchor=north] {2};

\filldraw[red] (-2.2,-1.5) circle (0pt) node[anchor=south] {$F_{k+2}-N$};
\draw[dotted, thick] (-2.2,-1.5) -- (-2.2,-2);

\foreach \x in {1,...,5}
\filldraw[black] (-2.2-0.2*\x,-2) circle (0.5pt) node[anchor=north] {1};

\foreach \x in {1,...,16}
\filldraw[black] (-3.2-0.2*\x,-2) circle (0.5pt) node[anchor=north] {2};

\filldraw[black] (6.4,0) circle (0.5pt) node[anchor=north] {2};

\foreach \x in {1,...,7}
\filldraw[black] (6.4-0.2*\x,0) circle (0.5pt) node[anchor=north] {3};

\filldraw[black] (4.8,0) circle (0.5pt) node[anchor=north] {2};

\foreach \x in {1,...,8}
\filldraw[black] (4.8-0.2*\x,0) circle (0.5pt) node[anchor=north] {3};

\foreach \x in {1,...,16}
\filldraw[black] (3.2-0.2*\x,0) circle (0.5pt) node[anchor=north] {4};

\foreach \x in {1,...,5}
\filldraw[black] (0-0.2*\x,0) circle (0.5pt) node[anchor=north] {5};

\filldraw[red] (-1,0.5) circle (0pt) node[anchor=south] {$F_{k+1}-N$};
\draw[dotted, thick] (-1,0.5) -- (-1,0);

\foreach \x in {1,...,11}
\filldraw[black] (-1-0.2*\x,0) circle (0.5pt) node[anchor=north] {4};

\foreach \x in {1,...,7}
\filldraw[black] (-3.2-0.2*\x,0) circle (0.5pt) node[anchor=north] {5};

\filldraw[black] (-4.8,0) circle (0.5pt) node[anchor=north] {4};

\foreach \x in {1,...,8}
\filldraw[black] (-4.8-0.2*\x,0) circle (0.5pt) node[anchor=north] {5};
\end{tikzpicture}
\end{center}
\medskip

Let $S=\{1,...,F_{k-1}-1\}$. Then $S$ is a set of vertices of degree at least 3 
together with the vertex $\frac{1}{2}F_k$, which is of degree 2. 

 The vertex $\frac{1}{2}F_k$ is the only vertex of degree 2  adjacent to a degree 4 vertex (namely, $\frac{1}{2}F_{k-3}$) and a degree 1 vertex (namely, $F_k+\frac{1}{2}F_{k-3}$).
 All other degree 2 vertices have either a degree 2 neighbour or a degree 3 neighbour. 
 Therefore $\phi(\frac{1}{2}F_k)=\frac{1}{2}F_k$
 
 The vertices of degree at least 3 are mapped to the vertices of degree at least 3. 
So $\phi(S)=S$ and $\psi=\phi \mid_S$ is an automorphism of $G_{F_{k-1}-1}$.
 By the induction hypothesis, $\psi$ is the identity on $S$.
 
 The vertex $F_{k-1}$ is the only degree 2 vertex adjacent to both a degree 3 and
  a degree 1 vertex, and so $\phi(F_{k-1})=F_{k-1}$. So, for any $x \leq F_{k-1}$, 
  $\phi(x)=x$.
  
Now consider any $y \geq F_k$ with degree 1, and its only neighbour $F_{k+1}-y$.
Since $F_{k+1}-y\leq F_{k-1}$,  by the preceding paragraphs, $\phi(F_{k+1}-y)=F_{k+1}-y$. 
Since  $y$ and $F_{k+1}-y$ are adjacent,  $\phi(y)$ and  $F_{k+1}-y$ are
adjacent, but the only degree 1 vertex  adjacent to $F_{k+1}-y$ is $y$.
 Hence, $\phi(y)=y$. So $\phi$ fixes those $y\geq F_k$ with degree 1.

Define two types of degree 2 vertices: type one are those  in the interval
 $\left(F_{k-1}, F_{k}\right)$ and type two are in the interval  $\left[F_{k+2}-N, N\right]$.

First, type one vertices are not adjacent to type two vertices.

Let $x \in \left(F_{k-1},F_{k}\right)$ be a type one vertex. 
Then $x$ is adjacent to $F_{k+1}-x$---another type one vertex. 
Vertex $x$ is also adjacent to $F_{k}-x \leq F_{k-1}$. 
Vertex $F_{k+1}-x$ is adjacent to $x-F_{k-1} \leq F_{k-1}$. 
The sum of indexes of the vertices $F_{k+1}-x$ and $x-F_{k-1}$ is $F_k$.

Let $y \in \left[F_{k+2}-N,N\right]$ be a type two vertex. 
Then $y$ is adjacent to $F_{k+2}-y$---another type two vertex.
 Vertex $y$ is also adjacent to $F_{k+1}-y \leq F_{k-1}$. 
 Vertex $F_{k+2}-x$ is adjacent to $y-F_{k} \leq F_{k-1}$
 since $(F_{k+2}-y)+(y-F_{k})=F_{k+1}$.

Type one vertices are mapped to type one vertices, since every vertex adjacent to
two type one vertices has  neighbours that sum to $F_k$. 
Type two vertices are mapped to type two vertices, since every vertex adjacent to
 two type two vertices has neighbours that sum to $F_{k+1}$. 
 This means that $\phi$ preserves the type of degree 2 vertices.

For $x \in \left(F_{k-1},F_{k}\right)$, the vertex $F_k-x$ is adjacent to $x$.
 Therefore, $\phi(x)$ is adjacent to $\phi(F_k-x)=F_k-x$. 
 However, $x$ is the only type one vertex adjacent to $F_k-x$, and so $\phi(x)=x$.

For $y \in \left[F_{k+2}-N,N \right]$, the vertex $F_{k+1}-y$ is adjacent to $y$.
 Therefore, $\phi(y)$ is adjacent to $\phi(F_{k+1}-y)=F_{k+1}-y$. 
 However, $y$ is the only type two vertex that is adjacent to $F_{k+1}-y$, and  
 so $\phi(y)=y$.

Therefore, the only automorphism of $G_N$ is the identity. \bigskip

\noindent{\sc Case 2:} $N\in \left[\frac{3}{2}F_{k}, \frac{1}{2}F_{k+3} \right)$.
There are two subcases to consider.\medskip

{\sc Case} 2a: $N \in \left[\frac{3}{2}F_{k},F_{k+1}\right)$. 

\begin{center}
\begin{tikzpicture}[scale=.85]
\draw[black, thick] (-6.4,0) -- (6.4,0);
\filldraw[black] (-6.4,0) circle (1pt) node[anchor=south] {$F_{k-5}$};
\filldraw[black] (-3.2,0) circle (1pt) node[anchor=south] {$F_{k-4}$};
\filldraw[black] (0,0) circle (1pt) node[anchor=south] {$F_{k-3}$};
\filldraw[black] (3.2,0) circle (1pt) node[anchor=south] {$F_{k-2}$};
\filldraw[black] (6.4,0) circle (1pt) node[anchor=south] {$F_{k-1}$};
\filldraw[black] (-4.8,0) circle (1pt) node[anchor=south] {$\frac{F_{k-3}}{2}$};
\filldraw[black] (4.8,0) circle (1pt) node[anchor=south] {$\frac{F_{k}}{2}$};

\draw[black, thick] (-6.4,-2) -- (6.4,-2);
\filldraw[black] (-6.4,-2) circle (1pt) node[anchor=south] {$F_{k-1}$};
\filldraw[black] (-3.2,-2) circle (1pt) node[anchor=south] {$F_{k}$};
\filldraw[black] (0,-2) circle (1pt) node[anchor=south] {$F_{k+1}$};
\filldraw[black] (3.2,-2) circle (1pt) node[anchor=south] {$F_{k+2}$};
\filldraw[black] (6.4,-2) circle (1pt) node[anchor=south] {$F_{k+3}$};
\filldraw[black] (1.6,-2) circle (1pt) node[anchor=south] {$\frac{F_{k+3}}{2}$};

\filldraw[black] (-0.4,-2) circle (1pt) node[anchor=north] {2};
\filldraw[red] (-0.4,-1.2) circle (0pt) node[anchor=south] {$N$};
\draw[dotted, thick] (-0.4,-1.2)--(-0.4,-2);

\filldraw[black] (-0.8,-2) circle (1pt) node[anchor=north] {2};
\filldraw[black] (-0.8,-2) circle (1pt) node[anchor=south] {$\frac{3F_k}{2}$};
\foreach \x in {1,...,11}
\filldraw[black] (-0.4-0.2*\x,-2) circle (0.5pt) node[anchor=north] {2};

\filldraw[red] (-2.6,-1.2) circle (0pt) node[anchor=south] {$F_{k+2}-N$};
\draw[dotted, thick] (-2.6,-1.2) -- (-2.6,-2);

\filldraw[black] (-2.2,-2) circle (1pt) node[anchor=north] {2};
\filldraw[black] (-2.2,-2) circle (1pt) node[anchor=south] {$\frac{2F_k+F_{k-3}}{2}$};

\foreach \x in {1,...,3}
\filldraw[black] (-2.6-0.2*\x,-2) circle (0.5pt) node[anchor=north] {1};

\foreach \x in {1,...,16}
\filldraw[black] (-3.2-0.2*\x,-2) circle (0.5pt) node[anchor=north] {2};

\filldraw[black] (6.4,0) circle (0.5pt) node[anchor=north] {2};

\foreach \x in {1,...,7}
\filldraw[black] (6.4-0.2*\x,0) circle (0.5pt) node[anchor=north] {3};

\filldraw[black] (4.8,0) circle (0.5pt) node[anchor=north] {2};

\foreach \x in {1,...,8}
\filldraw[black] (4.8-0.2*\x,0) circle (0.5pt) node[anchor=north] {3};

\foreach \x in {1,...,16}
\filldraw[black] (3.2-0.2*\x,0) circle (0.5pt) node[anchor=north] {4};

\foreach \x in {1,...,16}
\filldraw[black] (0-0.2*\x,0) circle (0.5pt) node[anchor=north] {5};

\foreach \x in {1,...,7}
\filldraw[black] (-3.2-0.2*\x,0) circle (0.5pt) node[anchor=north] {6};

\filldraw[black] (-4.8,0) circle (0.5pt) node[anchor=north] {5};

\foreach \x in {1,...,4}
\filldraw[black] (-4.8-0.2*\x,0) circle (0.5pt) node[anchor=north] {6};
\filldraw[red] (-5.6,0.8) circle (0pt) node[anchor=south] {$F_{k+1}-N$};
\draw[dotted, thick] (-5.6,0.8) -- (-5.6,0);

\foreach \x in {1,...,3}
\filldraw[black] (-5.6-0.2*\x,0) circle (0.5pt) node[anchor=north] {5};
\end{tikzpicture}
\end{center}
\medskip

Let $S=\{1,...,F_{k-2}-1\}$ and let $\phi\in \mbox{Aut}(G_N)$.
 Then $S$ is a set of vertices of degree at least 4 and since any automorphism
 maps  vertices of degree at least 4  to vertices of degree at least 4,
  $\phi(S)=S$.  The map $\psi=\phi \mid_S $ is then an automorphism 
 of $G_{F_{k-2}-1}$. By  the induction hypothesis, $\psi$ is either the identity 
  or interchanges 
 $\frac{3}{2}F_{k-3}$ with $\frac{1}{2}F_{k-3}$ and fixes all other vertices of $S$.
  However, in $G_N$, the degrees of $\frac{3}{2}F_{k-3}$ and $\frac{1}{2}F_{k-3}$ are 4 and 5 respectively, so no automorphism carries one to the other. Thus, $\psi$ is the identity map on $S$.

 Vertex $F_{k-2}$ is the only degree 3 vertex that does not have a 
 neighbour of degree 3, and so, $\phi(F_{k-2})=F_{k-2}$. 
 Each  remaining degree 3 vertex has a unique neighbour  less than $F_{k-3}$. 
 Therefore $\phi$ fixes degree 3 vertices. 
 So $\phi$ fixes every vertex in $[1,F_{k-1})\backslash\{\frac{1}{2}F_k\}$.
 (The last point $\frac{1}{2}F_k$ in this interval is discussed below.)

Vertex $F_k$ is the only degree 1 vertex that has a neighbour of degree 2, and so
 $\phi(F_k)=F_k$. 

Consider a degree 1 vertex $y > F_k$.
Then $y$ is adjacent to  $F_{k+1}-y$. 
Since $F_{k+1}-y\leq F_{k-1}$, by above,
 $\phi(F_{k+1}-y)=F_{k+1}-y$. Since $y$ is adjacent to $F_{k+1}-y$, 
  $\phi(y)$ is adjacent to $\phi(F_{k+1}-y)=F_{k+1}-y$. Since $y$ is the only
  degree 1 vertex  adjacent to $F_{k+1}-y$, it follows that $\phi(y)=y$.

Identify three types of degree 2 vertices: say those of
type one are in the interval $\left(F_{k-1}, F_k\right)$,
those of type two are in the interval 
$\left[F_{k+2}-N, N\right]\backslash \{F_k+F_{k-3}, \frac{3}{2}F_k\}$, and 
call the remaining three $\frac{1}{2}F_k$, $F_k+\frac{1}{2}F_{k-3}$,  
$\frac{3}{2}F_k$, type three.

Note that type one vertices are not adjacent to type two vertices.

Since $F_k+\frac{1}{2}F_{k-3}$ is the only degree 2 vertex 
 whose neighbours are only degree 2 vertices, it follows 
that  $\phi(F_k+\frac{1}{2}F_{k-3})=F_k+\frac{1}{2}F_{k-3}$. 
Vertices $\frac{1}{2}F_k$ and $\frac{3}{2}F_k$ are the only neighbours 
 of $F_k+\frac{1}{2}F_{k-3}$, so $\phi(\{\frac{1}{2}F_k,\frac{3}{2}F_k\})
      =\{\frac{1}{2}F_k,\frac{3}{2}F_k\}$.
 Thus, type three vertices are mapped to type three vertices.

Let $x \in \left(F_{k-1},F_{k}\right)$ be a type one vertex. 
Then $x$ is adjacent to $F_{k+1}-x$ (another type one vertex),
and $F_{k+1}-x \leq F_{k-1}$.

Now let $y$ be a type two vertex;  then $y$ is adjacent to $F_{k+2}-x$
 (another type two vertex) and
 $y$ is also adjacent to $F_{k+1}-y \leq F_{k-1}$. 
Vertex $F_{k+2}-x$ is adjacent to $y-F_{k} \leq F_{k-1}$. 
The sum of  $F_{k+2}-y$ and $y-F_{k}$ is $F_{k+1}$.

Type one vertices are mapped to type one vertices, 
since every  two adjacent type one
 vertices have  neighbours that sum to $F_k$. 
 
 Type two vertices are mapped to type two vertices, 
 since  two adjacent type two vertices have 
  neighbours that sum to $F_{k+1}$. Hence,  $\phi$
  preserves the type of degree 2 vertices.

For $x \in \left(F_{k-1},F_{k}\right)$, vertex $F_k-x$ is a neighbour
of $x$. Therefore, $\phi(x)$ is
adjacent to $\phi(F_k-x)=F_k-x$. However, $x$ is the only type one vertex
 adjacent to $F_k-x$, and so $\phi(x)=x$. 

Let $y$ be a  type two vertex. Since $F_{k+1}-y$ is a neighbour of $y$,
 $\phi(y)$ is adjacent to $\phi(F_{k+1}-y)=F_{k+1}-y$. 
However, $y$ is the only type two vertex adjacent to $F_{k+1}-y$,
and so  $\phi(y)=y$.

So, $\phi$ fixes each vertex of $V(G)\backslash\{\frac{1}{2}F_k, \frac{3}{2}F_k\}$. 

Since $\frac{1}{2}F_k$ and $\frac{3}{2}F_k$ have
  the same neighbours (namely, $F_{k}+\frac{1}{2}F_{k-3}$ and $\frac{1}{2}F_{k-3}$)
   either $\phi$ is the identity or $\phi$ interchanges
 $\frac{1}{2}F_k$ and $\frac{3}{2}F_k$ and fixes all other vertices.
 \bigskip

\noindent{\sc Case} 2b:  $N \in \left[F_{k+1}, \frac{1}{2}F_{k+3} \right)$.

\begin{center}
\begin{tikzpicture}[scale=.85]
\draw[black, thick] (-6.4,0) -- (6.4,0);
\filldraw[black] (-6.4,0) circle (1pt) node[anchor=south] {$F_{k-5}$};
\filldraw[black] (-3.2,0) circle (1pt) node[anchor=south] {$F_{k-4}$};
\filldraw[black] (0,0) circle (1pt) node[anchor=south] {$F_{k-3}$};
\filldraw[black] (3.2,0) circle (1pt) node[anchor=south] {$F_{k-2}$};
\filldraw[black] (6.4,0) circle (1pt) node[anchor=south] {$F_{k-1}$};
\filldraw[black] (-4.8,0) circle (1pt) node[anchor=south] {$\frac{F_{k-3}}{2}$};
\filldraw[black] (4.8,0) circle (1pt) node[anchor=south] {$\frac{F_{k}}{2}$};

\draw[black, thick] (-6.4,-2) -- (6.4,-2);
\filldraw[black] (-6.4,-2) circle (1pt) node[anchor=south] {$F_{k-1}$};
\filldraw[black] (-3.2,-2) circle (1pt) node[anchor=south] {$F_{k}$};
\filldraw[black] (0,-2) circle (1pt) node[anchor=south] {$F_{k+1}$};
\filldraw[black] (3.2,-2) circle (1pt) node[anchor=south] {$F_{k+2}$};
\filldraw[black] (6.4,-2) circle (1pt) node[anchor=south] {$F_{k+3}$};
\filldraw[black] (1.6,-2) circle (1pt) node[anchor=south] {$\frac{F_{k+3}}{2}$};

\filldraw[black] (1.2,-2) circle (1pt) node[anchor=north] {1};
\filldraw[red] (1.2,-1.2) circle (0pt) node[anchor=south] {$N$};
\draw[dotted, thick] (1.2,-1.2)--(1.2,-2);
\foreach \x in {1,...,6}
\filldraw[black] (1.2-0.2*\x,-2) circle (0.5pt) node[anchor=north] {1};

\filldraw[black] (-0.8,-2) circle (1pt) node[anchor=north] {2};
\filldraw[black] (-0.8,-2) circle (1pt) node[anchor=south] {$\frac{3F_k}{2}$};

\foreach \x in {1,...,16}
\filldraw[black] (0-0.2*\x,-2) circle (0.5pt) node[anchor=north] {2};

\filldraw[red] (-5.6,-1.2) circle (0pt) node[anchor=south] {$F_{k+2}-N$};
\draw[dotted, thick] (-5.6,-1.2) -- (-5.6,-2);

\filldraw[black] (-2.2,-2) circle (1pt) node[anchor=north] {2};
\filldraw[black] (-2.2,-2) circle (1pt) node[anchor=south] {$\frac{2F_k+F_{k-3}}{2}$};

\foreach \x in {1,...,12}
\filldraw[black] (-3.2-0.2*\x,-2) circle (0.5pt) node[anchor=north] {3};

\foreach \x in {1,...,4}
\filldraw[black] (-5.6-0.2*\x,-2) circle (0.5pt) node[anchor=north] {2};

\filldraw[black] (6.4,0) circle (0.5pt) node[anchor=north] {2};

\foreach \x in {1,...,7}
\filldraw[black] (6.4-0.2*\x,0) circle (0.5pt) node[anchor=north] {3};

\filldraw[black] (4.8,0) circle (0.5pt) node[anchor=north] {2};

\foreach \x in {1,...,8}
\filldraw[black] (4.8-0.2*\x,0) circle (0.5pt) node[anchor=north] {3};

\foreach \x in {1,...,16}
\filldraw[black] (3.2-0.2*\x,0) circle (0.5pt) node[anchor=north] {4};

\foreach \x in {1,...,16}
\filldraw[black] (0-0.2*\x,0) circle (0.5pt) node[anchor=north] {5};

\foreach \x in {1,...,7}
\filldraw[black] (-3.2-0.2*\x,0) circle (0.5pt) node[anchor=north] {6};

\filldraw[black] (-4.8,0) circle (0.5pt) node[anchor=north] {5};

\foreach \x in {1,...,8}
\filldraw[black] (-4.8-0.2*\x,0) circle (0.5pt) node[anchor=north] {6};
\end{tikzpicture}
\end{center}
\medskip

Let $S=\{1,...,F_{k+1}-1\}$. Then $S$ is a set of vertices of degree at least 2.
Since any automorphism maps 
 vertices of degree at least 2 to vertices of degree at least 2, 
 $\phi(S)=S$ and $\psi=\phi |_S $ is an automorphism of $G_{F_{k+1}-1}$. By Case 2a,
  $\psi$ is either the identity isomorphism or $\psi$  interchanges
  $\frac{3}{2}F_{k}$ and $\frac{1}{2}F_{k}$ and fixes all other vertices of $S$.

The only degree 1 vertex that has a  neighbour of degree 2
is $F_{k+1}$, and so   $\phi(F_{k+1})=F_{k+1}$. 

Consider $y > F_{k+1}$; then the degree of $y$ is 1,
 with its only neighbour  $F_{k+2}-y$. Then $\frac{1}{2}F_k<F_{k+2}-y\leq F_{k}$ 
 and therefore
  $\phi(F_{k+2}-y)=F_{k+2}-y$. Since $y$ is adjacent to $F_{k+2}-y$, then $\phi(y)$ is
  adjacent to $F_{k+2}-y$. However,  $y$ is the only degree 1 vertex  adjacent to
   $F_{k+2}-y$, and so  $\phi(y)=y$.
So,  $\phi$ fixes each vertex in $V(G_N)\backslash \{\frac{1}{2}F_k, \frac{3}{2}F_k\}$.

  Since $\frac{1}{2}F_k$ and $\frac{3}{2}F_k$
 have the same set of neighbours (both are adjacent to $F_{k}+\frac{1}{2}F_{k-3}$ and
 $\frac{1}{2}F_{k-3}$), $\phi$ either fixes both $\frac{1}{2}F_k$ and $\frac{3}{2}F_k$
 (in which case $\phi$ is the identity) or $\phi$ interchanges
 $\frac{1}{2}F_k$ and $\frac{3}{2}F_k$ (and fixes all other vertices).\bigskip

\noindent{\sc Case 3:}  $N \in \left[F_{k+1}+\frac{1}{2}F_k,F_{k+2}+\frac{1}{2}F_{k-3}\right)$. 
Note that $F_{k+1}+\frac{1}{2}F_k=\frac{1}{2}F_{k+3}$.
There are two subcases to consider:
\medskip

{\sc Case} 3a: $N \in \left[F_{k+1}+\frac{1}{2}F_k, F_{k+2} \right)$.

\begin{center}
\begin{tikzpicture}[scale=.85]
\draw[black, thick] (-6.4,0) -- (6.4,0);
\filldraw[black] (-6.4,0) circle (1pt) node[anchor=south] {$F_{k-5}$};
\filldraw[black] (-3.2,0) circle (1pt) node[anchor=south] {$F_{k-4}$};
\filldraw[black] (0,0) circle (1pt) node[anchor=south] {$F_{k-3}$};
\filldraw[black] (3.2,0) circle (1pt) node[anchor=south] {$F_{k-2}$};
\filldraw[black] (6.4,0) circle (1pt) node[anchor=south] {$F_{k-1}$};
\filldraw[black] (-4.8,0) circle (1pt) node[anchor=south] {$\frac{F_{k-3}}{2}$};
\filldraw[black] (4.8,0) circle (1pt) node[anchor=south] {$\frac{F_{k}}{2}$};

\draw[black, thick] (-6.4,-2) -- (6.4,-2);
\filldraw[black] (-6.4,-2) circle (1pt) node[anchor=south] {$F_{k-1}$};
\filldraw[black] (-3.2,-2) circle (1pt) node[anchor=south] {$F_{k}$};
\filldraw[black] (0,-2) circle (1pt) node[anchor=south] {$F_{k+1}$};
\filldraw[black] (3.2,-2) circle (1pt) node[anchor=south] {$F_{k+2}$};
\filldraw[black] (6.4,-2) circle (1pt) node[anchor=south] {$F_{k+3}$};
\filldraw[black] (1.6,-2) circle (1pt) node[anchor=south] {$\frac{F_{k+3}}{2}$};

\filldraw[black] (2.4,-2) circle (1pt) node[anchor=north] {2};
\filldraw[red] (2.4,-1.2) circle (0pt) node[anchor=south] {$N$};
\draw[dotted, thick] (2.4,-1.2)--(2.4,-2);

\foreach \x in {1,...,3}
\filldraw[black] (2.4-0.2*\x,-2) circle (0.5pt) node[anchor=north] {2};
\filldraw[black] (1.6,-2) circle (0.5pt) node[anchor=north] {1};

\foreach \x in {1,...,4}
\filldraw[black] (1.6-0.2*\x,-2) circle (0.5pt) node[anchor=north] {2};
\filldraw[black] (0.8,-2) circle (1pt) node[anchor=north] {2};
\filldraw[red] (0.8,-1.2) circle (0pt) node[anchor=south] {$F_{k+3}-N$};

\draw[dotted, thick] (0.8,-1.2)--(0.8,-2);

\foreach \x in {1,...,4}
\filldraw[black] (0.8-0.2*\x,-2) circle (0.5pt) node[anchor=north] {1};
\filldraw[black] (-0.8,-2) circle (1pt) node[anchor=north] {2};
\filldraw[black] (-0.8,-2) circle (1pt) node[anchor=south] {$\frac{3F_k}{2}$};

\foreach \x in {1,...,16}
\filldraw[black] (0-0.2*\x,-2) circle (0.5pt) node[anchor=north] {2};
\filldraw[black] (-2.2,-2) circle (1pt) node[anchor=north] {2};
\filldraw[black] (-2.2,-2) circle (1pt) node[anchor=south] {$\frac{2F_k+F_{k-3}}{2}$};

\foreach \x in {1,...,16}
\filldraw[black] (-3.2-0.2*\x,-2) circle (0.5pt) node[anchor=north] {3};
\filldraw[black] (6.4,0) circle (0.5pt) node[anchor=north] {3};

\foreach \x in {1,...,7}
\filldraw[black] (6.4-0.2*\x,0) circle (0.5pt) node[anchor=north] {4};
\filldraw[black] (4.8,0) circle (0.5pt) node[anchor=north] {3};

\foreach \x in {1,...,8}
\filldraw[black] (4.8-0.2*\x,0) circle (0.5pt) node[anchor=north] {4};

\foreach \x in {1,...,10}
\filldraw[black] (3.2-0.2*\x,0) circle (0.5pt) node[anchor=north] {5};

\filldraw[red] (1.2, 0.8) circle (0pt) node[anchor=south] {$F_{k+2}-N$};
\draw[dotted, thick] (1.2, 0.8) -- (1.2,0);

\foreach \x in {1,...,6}
\filldraw[black] (1.2-0.2*\x,0) circle (0.5pt) node[anchor=north] {4};

\foreach \x in {1,...,16}
\filldraw[black] (0-0.2*\x,0) circle (0.5pt) node[anchor=north] {5};

\foreach \x in {1,...,7}
\filldraw[black] (-3.2-0.2*\x,0) circle (0.5pt) node[anchor=north] {6};

\filldraw[black] (-4.8,0) circle (0.5pt) node[anchor=north] {5};

\foreach \x in {1,...,8}
\filldraw[black] (-4.8-0.2*\x,0) circle (0.5pt) node[anchor=north] {6};
\end{tikzpicture}
\end{center}
\medskip

Let $S=\{1,...,F_{k}-1\}$. Since $S$ is a set of vertices of degree at least 3 and
any automorphism maps vertices of degree at least 3  to  vertices of degree at least 3,
it follows that
$\phi(S)=S$ and the map $\psi=\phi |_S$ is an automorphism of 
$G_{F_{k}-1}$. By the induction
hypothesis, $\psi$ is the identity isomorphism.
Therefore, for all $x<F_k$,  $\phi(x)=x$.

The only degree 2 vertex that does not have a neighbour of degree 2
is $F_k$, and so $\phi(F_k)=F_k$. As in Case 1c, $\phi$ fixes all vertices of degree 2. 

 Consider a degree 1 vertex $y \geq F_{k+1}$. The only neighbour 
 of $y$ is $F_{k+2}-y\leq F_{k}$ and so $\phi(F_{k+2}-y)=F_{k+2}-y$. 
Thus $\phi(y)$ is adjacent to $F_{k+2}-y$. However, $y$ is the only degree 1
 vertex adjacent to $F_{k+2}-y$ and so $\phi(y)=y$.

%

Therefore, the only automorphism of $G_N$ is the identity. \bigskip

{\sc Case} 3b: $N \in \left[F_{k+2}, F_{k+2}+\frac{1}{2}F_{k-3}\right)$. 

\begin{center}
\begin{tikzpicture}[scale=.85]
\draw[black, thick] (-6.4,0) -- (6.4,0);
\filldraw[black] (-6.4,0) circle (1pt) node[anchor=south] {$F_{k-5}$};
\filldraw[black] (-3.2,0) circle (1pt) node[anchor=south] {$F_{k-4}$};
\filldraw[black] (0,0) circle (1pt) node[anchor=south] {$F_{k-3}$};
\filldraw[black] (3.2,0) circle (1pt) node[anchor=south] {$F_{k-2}$};
\filldraw[black] (6.4,0) circle (1pt) node[anchor=south] {$F_{k-1}$};
\filldraw[black] (-4.8,0) circle (1pt) node[anchor=south] {$\frac{F_{k-3}}{2}$};
\filldraw[black] (4.8,0) circle (1pt) node[anchor=south] {$\frac{F_{k}}{2}$};

\draw[black, thick] (-6.4,-2) -- (6.4,-2);
\filldraw[black] (-6.4,-2) circle (1pt) node[anchor=south] {$F_{k-1}$};
\filldraw[black] (-3.2,-2) circle (1pt) node[anchor=south] {$F_{k}$};
\filldraw[black] (0,-2) circle (1pt) node[anchor=south] {$F_{k+1}$};
\filldraw[black] (3.2,-2) circle (1pt) node[anchor=south] {$F_{k+2}$};
\filldraw[black] (6.4,-2) circle (1pt) node[anchor=south] {$F_{k+3}$};
\filldraw[black] (1.6,-2) circle (1pt) node[anchor=south] {$\frac{F_{k+3}}{2}$};

\filldraw[black] (4.0,-1.9) circle (0pt) node[anchor=south] {\scalebox{0.8}{$\frac{2F_{k+2}+F_{k-3}}{2}$}};
\filldraw[black] (4.0,-2) circle (1pt) {};

\filldraw[black] (3.8,-2) circle (1pt) node[anchor=north] {1};
\filldraw[red] (3.8,-1.2) circle (0pt) node[anchor=south] {$N$};
\draw[dotted, thick] (3.8,-1.2)--(3.8,-2);

\foreach \x in {1,...,3}
\filldraw[black] (3.8-0.2*\x,-2) circle (0.5pt) node[anchor=north] {1};

\foreach \x in {1,...,7}
\filldraw[black] (3.2-0.2*\x,-2) circle (0.5pt) node[anchor=north] {2};

\filldraw[black] (1.6,-2) circle (0.5pt) node[anchor=north] {1};

\foreach \x in {1,...,8}
\filldraw[black] (1.6-0.2*\x,-2) circle (0.5pt) node[anchor=north] {2};

\filldraw[red] (-0.6,-1.2) circle (0pt) node[anchor=south] {$F_{k+3}-N$};
\draw[dotted, thick] (-0.6,-1.2)--(-0.6,-2);
\foreach \x in {1,...,3}
\filldraw[black] (0-0.2*\x,-2) circle (0.5pt) node[anchor=north] {3};

\filldraw[black] (-0.8,-2) circle (1pt) node[anchor=north] {2};
\filldraw[black] (-0.8,-2) circle (1pt) node[anchor=south] {$\frac{3F_k}{2}$};

\foreach \x in {1,...,12}
\filldraw[black] (-0.8-0.2*\x,-2) circle (0.5pt) node[anchor=north] {2};

\filldraw[black] (-2.2,-2) circle (1pt) node[anchor=north] {2};
\filldraw[black] (-2.2,-2) circle (1pt) node[anchor=south] {$\frac{2F_k+F_{k-3}}{2}$};

\foreach \x in {1,...,16}
\filldraw[black] (-3.2-0.2*\x,-2) circle (0.5pt) node[anchor=north] {3};

\filldraw[black] (6.4,0) circle (0.5pt) node[anchor=north] {3};

\foreach \x in {1,...,7}
\filldraw[black] (6.4-0.2*\x,0) circle (0.5pt) node[anchor=north] {4};

\filldraw[black] (4.8,0) circle (0.5pt) node[anchor=north] {3};

\foreach \x in {1,...,8}
\filldraw[black] (4.8-0.2*\x,0) circle (0.5pt) node[anchor=north] {4};

\foreach \x in {1,...,16}
\filldraw[black] (3.2-0.2*\x,0) circle (0.5pt) node[anchor=north] {5};

\foreach \x in {1,...,16}
\filldraw[black] (0-0.2*\x,0) circle (0.5pt) node[anchor=north] {6};

\foreach \x in {1,...,7}
\filldraw[black] (-3.2-0.2*\x,0) circle (0.5pt) node[anchor=north] {7};

\filldraw[black] (-4.8,0) circle (0.5pt) node[anchor=north] {6};

\foreach \x in {1,...,8}
\filldraw[black] (-4.8-0.2*\x,0) circle (0.5pt) node[anchor=north] {7};
\end{tikzpicture}
\end{center}
\medskip

Let $S=\{1,...,F_{k+2}-1\}$. Then $S$ is a set of vertices of degree at least 2 together
with $\frac{1}{2}F_{k+3}$. Any automorphism maps vertices of degree at least 2 
to  vertices  of degree at least 2. The remaining vertex $\frac{1}{2}F_{k+3}$ is the
 only degree 1 vertex with a neighbour (namely $\frac{1}{2}F_k$) of degree 3 that
  has neighbours of degree 1, 2 and 6.
  All other degree 1 vertices have at least one 
 second neighbour of degree 7 or higher, and so
 $\phi(\frac{1}{2}F_{k+3})=\frac{1}{2}F_{k+3}$. So $\phi(S)=S$ 
 and $\psi=\phi |_S $ is an automorphism of $G_{F_{k+2}-1}$. 
 By Case 3a, $\psi$ is the identity isomorphism.
 Therefore, $\phi$ fixes each $x<F_{k+2}$.

Let $y \geq F_{k+2}$. The degree of $y$ is 1, with neighbour $F_{k+3}-y$.
Then $F_{k+3}-y\leq F_{k+1}$ and so  $\phi(F_{k+3}-y)=F_{k+3}-y$.
Since  $y$ is adjacent to $F_{k+3}-y$, it follows that $\phi(y)$ is adjacent to $F_{k+3}-y$.
However,  $y$ is the only degree 1 vertex adjacent to $F_{k+3}-y$, and so
 $\phi(y)=y$.

Therefore, the only automorphism of $G_N$ is the identity. \bigskip

\noindent{\sc Case} 4: $N \in \left[F_{k+2}+\frac{1}{2}F_{k-3}, F_{k+2}+\frac{1}{2}F_{k+1}\right)$. 
There are three subcases to consider.\medskip

{\sc Case} 4a: $N \in \left[F_{k+2}+\frac{1}{2}F_{k-3}, F_{k+2}+F_{k-1}\right)$.

\begin{center}
\begin{tikzpicture}[scale=.85]
\draw[black, thick] (-6.4,0) -- (6.4,0);
\filldraw[black] (-6.4,0) circle (1pt) node[anchor=south] {$F_{k-5}$};
\filldraw[black] (-3.2,0) circle (1pt) node[anchor=south] {$F_{k-4}$};
\filldraw[black] (0,0) circle (1pt) node[anchor=south] {$F_{k-3}$};
\filldraw[black] (3.2,0) circle (1pt) node[anchor=south] {$F_{k-2}$};
\filldraw[black] (6.4,0) circle (1pt) node[anchor=south] {$F_{k-1}$};
\filldraw[black] (-4.8,0) circle (1pt) node[anchor=south] {$\frac{F_{k-3}}{2}$};
\filldraw[black] (4.8,0) circle (1pt) node[anchor=south] {$\frac{F_{k}}{2}$};

\draw[black, thick] (-6.4,-2) -- (6.4,-2);
\filldraw[black] (-6.4,-2) circle (1pt) node[anchor=south] {$F_{k-1}$};
\filldraw[black] (-3.2,-2) circle (1pt) node[anchor=south] {$F_{k}$};
\filldraw[black] (0,-2) circle (1pt) node[anchor=south] {$F_{k+1}$};
\filldraw[black] (3.2,-2) circle (1pt) node[anchor=south] {$F_{k+2}$};
\filldraw[black] (6.4,-2) circle (1pt) node[anchor=south] {$F_{k+3}$};
\filldraw[black] (1.6,-2) circle (1pt) node[anchor=south] {$\frac{F_{k+3}}{2}$};

\filldraw[black] (4.0,-1.9) circle (0pt) node[anchor=south] {\scalebox{0.8}{$\frac{2F_{k+2}+F_{k-3}}{2}$}};
\filldraw[black] (4.0,-2) circle (1pt) {};

\filldraw[black] (4.4,-2) circle (1pt) node[anchor=north] {1};
\filldraw[red] (4.4,-1.2) circle (0pt) node[anchor=south] {$N$};
\draw[dotted, thick] (4.4,-1.2)--(4.4,-2);

\foreach \x in {1,...,6}
\filldraw[black] (4.4-0.2*\x,-2) circle (0.5pt) node[anchor=north] {1};

\foreach \x in {1,...,7}
\filldraw[black] (3.2-0.2*\x,-2) circle (0.5pt) node[anchor=north] {2};

\filldraw[black] (1.6,-2) circle (0.5pt) node[anchor=north] {1};

\foreach \x in {1,...,8}
\filldraw[black] (1.6-0.2*\x,-2) circle (0.5pt) node[anchor=north] {2};

\filldraw[red] (-2.4,-1.2) circle (0pt) node[anchor=south] {$F_{k+3}-N$};
\draw[dotted, thick] (-2.4,-1.2)--(-2.4,-2);

\foreach \x in {1,...,12}
\filldraw[black] (0-0.2*\x,-2) circle (0.5pt) node[anchor=north] {3};

\filldraw[black] (-0.8,-2) circle (1pt) node[anchor=south] {$\frac{3F_k}{2}$};

\foreach \x in {1,...,4}
\filldraw[black] (-2.4-0.2*\x,-2) circle (0.5pt) node[anchor=north] {2};

\filldraw[black] (-2.2,-2) circle (1pt) node[anchor=south] {$\frac{2F_k+F_{k-3}}{2}$};

\foreach \x in {1,...,16}
\filldraw[black] (-3.2-0.2*\x,-2) circle (0.5pt) node[anchor=north] {3};

\filldraw[black] (6.4,0) circle (0.5pt) node[anchor=north] {3};

\foreach \x in {1,...,7}
\filldraw[black] (6.4-0.2*\x,0) circle (0.5pt) node[anchor=north] {4};

\filldraw[black] (4.8,0) circle (0.5pt) node[anchor=north] {3};

\foreach \x in {1,...,8}
\filldraw[black] (4.8-0.2*\x,0) circle (0.5pt) node[anchor=north] {4};

\foreach \x in {1,...,16}
\filldraw[black] (3.2-0.2*\x,0) circle (0.5pt) node[anchor=north] {5};

\foreach \x in {1,...,16}
\filldraw[black] (0-0.2*\x,0) circle (0.5pt) node[anchor=north] {6};

\foreach \x in {1,...,7}
\filldraw[black] (-3.2-0.2*\x,0) circle (0.5pt) node[anchor=north] {7};

\filldraw[black] (-4.8,0) circle (0.5pt) node[anchor=north] {6};

\foreach \x in {1,...,8}
\filldraw[black] (-4.8-0.2*\x,0) circle (0.5pt) node[anchor=north] {7};
\end{tikzpicture}
\end{center}
\medskip

Let $S=\{1,...,F_{k-2}-1\}$. Then $S$ is the set of vertices of degree at least 5 
and the vertices of degree at least 5 are mapped to the vertices of degree 
at least 5. $\phi(S)=S$ and $\psi=\phi \mid_S $ is an automorphism of 
$G_{F_{k-2}-1}$. By the induction hypothesis, $\psi$ is either the
 identity  or the map that interchanges $\frac{1}{2}F_{k-3}$ 
 with $\frac{3}{2}F_{k-3}$ (and fixes all other vertices of $S$). 
However,  $\frac{1}{2}F_{k-3}$ and $\frac{3}{2}F_{k-3}$ have degrees 6 and 5 respectively;
so $\psi(\frac{1}{2}F_{k-3})=\frac{1}{2}F_{k-3}$ and 
 $\psi(\frac{3}{2}F_{k-3})=\frac{3}{2}F_{k-3}$. 
 Therefore, for each $x<F_{k-2}$, $\phi$ fixes $x$.
 
Let $y\in[F_{k-2},F_{k-1})$ be a vertex of degree 4. 
Then $y$ is adjacent to $F_{k-1}-y$. Vertex $F_{k-1}-y \leq F_{k-3}$. 
Then $\phi(F_{k-1}-y)=F_{k-1}-y$ is adjacent to $\phi(y)$. 
Vertex $y$ is the only vertex of degree 4 adjacent
to $F_{k-1}-y$. Therefore, $\phi$ fixes each 
$y\in[F_{k-2},F_{k-1})\backslash \{\frac{1}{2}F_k\}$.

The interval $[F_{k-1},F_k)$ consists of vertices of degree 3 that do not have 
a neighbour of degree 1. If a vertex of degree 3 has a neighbour of degree 1, 
then this vertex does not belong to interval $[F_{k-1},F_k)$. 
Therefore, $\phi([F_{k-1},F_k))=[F_{k-1},F_k)$.
 Let $y\in [F_{k-1},F_k)$; then y is adjacent to $F_k-y\leq F_{k-2}$. 
 Therefore, $\phi(y)$ is adjacent to $\phi(F_k-y)=F_k-y$. 
 Vertex $y$ is the only degree 3 vertex without a degree 1 neighbour 
  adjacent to $F_k-y$. Hence, $\phi$ fixes every vertex in $ [F_{k-1},F_k)$.

Vertex $F_{k+1}$ is the only degree 2 vertex that has a neighbour of degree 1, and
so  $\phi(F_{k+1})=F_{k+1}$. Vertex $F_{k}$ is the only degree 2 vertex
 that is adjacent to $F_{k+1}$, and so $\phi(F_k)=F_k$.
 
Proceed as in Case 1c, defining two types of degree 2 vertices, those in the interval 
$\left(F_{k}, F_{k+1}\right)$ and those in the interval $\left(F_{k+1}, F_{k+2}\right)$. 
As in Case 1c, conclude that for any vertex $y$ of degree 2,  $\phi(y)=y$.

The set  $S_1=\{1,...,F_{k+1}-1\}$ is the set of degree 3 or higher vertices
together with
 vertices of degree 2 in interval $[F_k, F_{k+1})$. Degree 3 or higher vertices are 
 mapped by any automorphism to degree 3 or higher vertices and degree 2 vertices 
 in $[F_k, F_{k+1})$ are fixed by $\phi$. 
The restriction $\psi=\phi |_{S_1} $ is an automorphism of $G_{F_{k+1}-1}$.
 By Case 2a, $\psi$ is either the identity or the isomorphism that interchanges
  only $\frac{1}{2}F_k$ with $\frac{3}{2}F_k$. 
Therefore, $\phi(x)=x$ for all vertices $x<F_{k+1}$, except  possibly 
$x=\frac{1}{2}F_k$ or $x=\frac{3}{2}F_k$.\bigskip

{\sc Case} 4b: $N \in \left[F_{k+2}+F_{k-1}, F_{k+2}+\frac{1}{2}F_{k+1}\right)$.

\begin{center}
\begin{tikzpicture}[scale=.85]
\draw[black, thick] (-6.4,0) -- (6.4,0);
\filldraw[black] (-6.4,0) circle (1pt) node[anchor=south] {$F_{k-5}$};
\filldraw[black] (-3.2,0) circle (1pt) node[anchor=south] {$F_{k-4}$};
\filldraw[black] (0,0) circle (1pt) node[anchor=south] {$F_{k-3}$};
\filldraw[black] (3.2,0) circle (1pt) node[anchor=south] {$F_{k-2}$};
\filldraw[black] (6.4,0) circle (1pt) node[anchor=south] {$F_{k-1}$};
\filldraw[black] (-4.8,0) circle (1pt) node[anchor=south] {$\frac{F_{k-3}}{2}$};
\filldraw[black] (4.8,0) circle (1pt) node[anchor=south] {$\frac{F_{k}}{2}$};

\draw[black, thick] (-6.4,-2) -- (6.4,-2);
\filldraw[black] (-6.4,-2) circle (1pt) node[anchor=south] {$F_{k-1}$};
\filldraw[black] (-3.2,-2) circle (1pt) node[anchor=south] {$F_{k}$};
\filldraw[black] (0,-2) circle (1pt) node[anchor=south] {$F_{k+1}$};
\filldraw[black] (3.2,-2) circle (1pt) node[anchor=south] {$F_{k+2}$};
\filldraw[black] (6.4,-2) circle (1pt) node[anchor=south] {$F_{k+3}$};
\filldraw[black] (1.6,-2) circle (1pt) node[anchor=south] {$\frac{F_{k+3}}{2}$};

\filldraw[black] (4.0,-1.9) circle (0pt) node[anchor=south] {\scalebox{0.8}{$\frac{2F_{k+2}+F_{k-3}}{2}$}};
\filldraw[black] (4.0,-2) circle (1pt) {};

\filldraw[black] (4.4,-2) circle (1pt) node[anchor=north] {1};
\filldraw[red] (4.4,-1.2) circle (0pt) node[anchor=south] {$N$};
\draw[dotted, thick] (4.4,-1.2)--(4.4,-2);

\foreach \x in {1,...,6}
\filldraw[black] (4.4-0.2*\x,-2) circle (0.5pt) node[anchor=north] {1};

\foreach \x in {1,...,7}
\filldraw[black] (3.2-0.2*\x,-2) circle (0.5pt) node[anchor=north] {2};

\filldraw[black] (1.6,-2) circle (0.5pt) node[anchor=north] {1};

\foreach \x in {1,...,8}
\filldraw[black] (1.6-0.2*\x,-2) circle (0.5pt) node[anchor=north] {2};

\foreach \x in {1,...,16}
\filldraw[black] (0-0.2*\x,-2) circle (0.5pt) node[anchor=north] {3};

\filldraw[black] (-0.8,-2) circle (1pt) node[anchor=south] {$\frac{3F_k}{2}$};

\filldraw[black] (-2.2,-2) circle (1pt) node[anchor=south] {$\frac{2F_k+F_{k-3}}{2}$};

\foreach \x in {1,...,2}
\filldraw[black] (-3.2-0.2* \x, -2) circle (0.5pt) node[anchor=north] {4};

\filldraw[red] (-3.6,-1.2) circle (0pt) node[anchor=south] {$F_{k+3}-N$};
\draw[dotted, thick] (-3.6,-1.2)--(-3.6,-2);

\foreach \x in {1,...,14}
\filldraw[black] (-3.6-0.2*\x,-2) circle (0.5pt) node[anchor=north] {3};

\filldraw[black] (6.4,0) circle (0.5pt) node[anchor=north] {3};

\foreach \x in {1,...,7}
\filldraw[black] (6.4-0.2*\x,0) circle (0.5pt) node[anchor=north] {4};

\filldraw[black] (4.8,0) circle (0.5pt) node[anchor=north] {3};

\foreach \x in {1,...,8}
\filldraw[black] (4.8-0.2*\x,0) circle (0.5pt) node[anchor=north] {4};

\foreach \x in {1,...,16}
\filldraw[black] (3.2-0.2*\x,0) circle (0.5pt) node[anchor=north] {5};

\foreach \x in {1,...,16}
\filldraw[black] (0-0.2*\x,0) circle (0.5pt) node[anchor=north] {6};

\foreach \x in {1,...,7}
\filldraw[black] (-3.2-0.2*\x,0) circle (0.5pt) node[anchor=north] {7};

\filldraw[black] (-4.8,0) circle (0.5pt) node[anchor=north] {6};

\foreach \x in {1,...,8}
\filldraw[black] (-4.8-0.2*\x,0) circle (0.5pt) node[anchor=north] {7};
\end{tikzpicture}
\end{center}
\medskip

Let $S_1=\{1,...,F_{k+1}-1\}$. Then $S_1$ is a set of vertices of degree 
at least 3 and the vertices of degree at least 3 are mapped to the vertices of degree 
at least 3. $\phi(S_1)=S$ and $\psi=\phi |_{S_1} $ is an automorphism of 
$G_{F_{k+1}-1}$. By Case 2a, $\psi$ is either the identity isomorphism or the 
one that interchanges $\frac{1}{2}F_k$ with $\frac{3}{2}F_k$. 
Therefore, $\phi(x)=x$ for all vertices $x<F_{k+1}$, except of possibly 
$x=\frac{1}{2}F_k$ or $x=\frac{3}{2}F_k$.

Vertex $F_{k+1}$ is the only degree 2 vertex that does not have a neighbour of degree 2.
Therefore, $\phi(F_{k+1})=F_{k+1}$. Let $y \geq F_{k+2}$ be a degree 1 vertex with
$y \neq \frac{1}{2}F_{k+3}$, $y \neq F_{k+2}+\frac{1}{2}F_{k-3}$ .
Since $F_{k+3}-y\leq F_{k+1}$ it follows that $\phi(F_{k+3}-y)=F_{k+3}-y$.
Also since $y$ is adjacent to $F_{k+3}-y$, it follows that $\phi(y)$ is adjacent 
to $F_{k+3}-y$.
Then $y$ is the only degree 1 vertex that is adjacent to $F_{k+3}-y$, and so $\phi(y)=y$.

Consider $F_{k+1}<y<F_{k+2}$, such that degree of $y$ is 2 (so
$y \neq \frac{1}{2}F_{k+3}$). Then $y$ is adjacent to  $F_{k+2}-y$. 
$F_{k+2}-y < F_{k}$ and $F_{k+2}-y \neq \frac{1}{2}F_{k}$ and therefore 
$\phi(F_{k+2}-y)=F_{k+2}-y$. 
$y$ is adjacent to $F_{k+2}-y$, therefore $\phi(y)$ is adjacent to $F_{k+2}-y$. $y$ is the only degree 2 vertex that is adjacent to $F_{k+2}-y$. Hence, $\phi(y)=y$.

Consider $x \geq F_{k+2}$, such that degree of $x$ is 1 and 
$x \neq F_{k+2}+\frac{1}{2}F_{k-3}$ . Then $x$ is adjacent to 
$F_{k+3}-y$. $F_{k+3}-x < F_{k+1}$ and $F_{k+3}-x \neq \frac{1}{2}F_{k}$, 
$F_{k+3}-x \neq \frac{3}{2}F_{k}$, therefore $\phi(F_{k+3}-x)=F_{k+3}-x$.
Since $x$ is adjacent to $F_{k+3}-x$, it follows that $\phi(x)$ is adjacent to $F_{k+3}-x$. 
Since $x$ is the only degree 1 vertex  adjacent to $F_{k+3}-x$, conclude 
that  $\phi(x)=x$.

So, $\phi$ fixes all but perhaps four of the vertices of $G_N$,
with the only possible exceptions being that $\phi$ might 
interchange $\frac{1}{2}F_k$ with $\frac{3}{2}F_k$ and  
$\frac{1}{2}F_{k+3}$ with $F_{k+2}+\frac{1}{2}F_{k-3}$.
 However,  $\frac{1}{2}F_k$ is adjacent to $\frac{1}{2}F_{k+3}$ and 
 $\frac{3}{2}F_k$ is adjacent to $F_{k+2}+\frac{1}{2}F_{k-3}$,
 so $\phi$ moves $\frac{1}{2}F_k$ iff $\phi$ moves $\frac{1}{2}F_{k+3}$.

Therefore, $\mbox{Aut}(G_N)$ consists of two mappings: the identity and 
the automorphism that  interchanges $\frac{1}{2}F_k$ with $\frac{3}{2}F_k$,
 interchanges  $\frac{1}{2}F_{k+3}$ with $F_{k+2}+\frac{1}{2}F_{k-3}$, 
  and fixes all other vertices. \bigskip

{\sc Case} 4c: 
$N \in \left[F_{k+2}+\frac{1}{2}F_{k+1}, F_{k+3}-\frac{1}{2}F_{k}\right)$.

\begin{center}
\begin{tikzpicture}[scale=.85]
\draw[black, thick] (-6.4,0) -- (6.4,0);
\filldraw[black] (-6.4,0) circle (1pt) node[anchor=south] {$F_{k-5}$};
\filldraw[black] (-3.2,0) circle (1pt) node[anchor=south] {$F_{k-4}$};
\filldraw[black] (0,0) circle (1pt) node[anchor=south] {$F_{k-3}$};
\filldraw[black] (3.2,0) circle (1pt) node[anchor=south] {$F_{k-2}$};
\filldraw[black] (6.4,0) circle (1pt) node[anchor=south] {$F_{k-1}$};
\filldraw[black] (-4.8,0) circle (1pt) node[anchor=south] {$\frac{F_{k-3}}{2}$};
\filldraw[black] (4.8,0) circle (1pt) node[anchor=south] {$\frac{F_{k}}{2}$};

\draw[black, thick] (-6.4,-2) -- (6.4,-2);
\filldraw[black] (-6.4,-2) circle (1pt) node[anchor=south] {$F_{k-1}$};
\filldraw[black] (-3.2,-2) circle (1pt) node[anchor=south] {$F_{k}$};
\filldraw[black] (0,-2) circle (1pt) node[anchor=south] {$F_{k+1}$};
\filldraw[black] (3.2,-2) circle (1pt) node[anchor=south] {$F_{k+2}$};
\filldraw[black] (6.6,-2) circle (0pt) node[anchor=south] {$F_{k+3}$};
\filldraw[black] (6.4,-2) circle (1pt);
\filldraw[black] (1.6,-2) circle (1pt) node[anchor=south] {$\frac{F_{k+3}}{2}$};

\filldraw[black] (4.0,-1.9) circle (0pt) node[anchor=south] {\scalebox{0.8}{$\frac{2F_{k+2}+F_{k-3}}{2}$}};
\filldraw[black] (4.0,-2) circle (1pt);

\filldraw[black] (5.6,-1.9) circle (0pt) node[anchor=south] {\scalebox{0.8}{$\frac{2F_{k+3}-F_{k}}{2}$}};
\filldraw[black] (5.6,-2) circle (1pt) {};

\foreach \x in {1,...,5}
\filldraw[black] (5.4-0.2*\x,-2) circle (0.5pt) node[anchor=north] {2};

\filldraw[red] (5.2,-1.2) circle (0pt) node[anchor=south] {$\scriptstyle N$};
\draw[dotted, thick] (5.2,-1.2)--(5.2,-2);

\filldraw[red] (4.4,-1.2) circle (0pt) node[anchor=south] {$\scriptstyle F_{k+4}-N$};
\draw[dotted, thick] (4.4,-1.2)--(4.4,-2);

\foreach \x in {1,...,6}
\filldraw[black] (4.4-0.2*\x,-2) circle (0.5pt) node[anchor=north] {1};

\foreach \x in {1,...,7}
\filldraw[black] (3.2-0.2*\x,-2) circle (0.5pt) node[anchor=north] {2};

\filldraw[black] (1.6,-2) circle (0.5pt) node[anchor=north] {1};

\foreach \x in {1,...,8}
\filldraw[black] (1.6-0.2*\x,-2) circle (0.5pt) node[anchor=north] {2};

\foreach \x in {1,...,16}
\filldraw[black] (0-0.2*\x,-2) circle (0.5pt) node[anchor=north] {3};

\filldraw[black] (-0.8,-2) circle (1pt) node[anchor=south] {$\frac{3F_k}{2}$};

\filldraw[black] (-2.2,-2) circle (1pt) node[anchor=south] {$\frac{2F_k+F_{k-3}}{2}$};

\foreach \x in {1,...,12}
\filldraw[black] (-3.2-0.2* \x, -2) circle (0.5pt) node[anchor=north] {4};

\filldraw[red] (-5.6,-1.2) circle (0pt) node[anchor=south] {$\scriptstyle F_{k+3}-N$};
\draw[dotted, thick] (-5.6,-1.2)--(-5.6,-2);

\foreach \x in {1,...,4}
\filldraw[black] (-5.6-0.2*\x,-2) circle (0.5pt) node[anchor=north] {3};

\filldraw[black] (6.4,0) circle (0.5pt) node[anchor=north] {3};

\foreach \x in {1,...,7}
\filldraw[black] (6.4-0.2*\x,0) circle (0.5pt) node[anchor=north] {4};

\filldraw[black] (4.8,0) circle (0.5pt) node[anchor=north] {3};

\foreach \x in {1,...,8}
\filldraw[black] (4.8-0.2*\x,0) circle (0.5pt) node[anchor=north] {4};

\foreach \x in {1,...,16}
\filldraw[black] (3.2-0.2*\x,0) circle (0.5pt) node[anchor=north] {5};

\foreach \x in {1,...,16}
\filldraw[black] (0-0.2*\x,0) circle (0.5pt) node[anchor=north] {6};

\foreach \x in {1,...,7}
\filldraw[black] (-3.2-0.2*\x,0) circle (0.5pt) node[anchor=north] {7};

\filldraw[black] (-4.8,0) circle (0.5pt) node[anchor=north] {6};

\foreach \x in {1,...,8}
\filldraw[black] (-4.8-0.2*\x,0) circle (0.5pt) node[anchor=north] {7};

\end{tikzpicture}
\end{center}
\medskip

Note that $\frac{1}{2}F_{k+1}$ is not an integer.

Let $S=\{1,...,F_{k+1}-1\}$. Then $S$ is a set of vertices of degree at least 3 and
the vertices of degree at least 3 are mapped to the vertices of degree at least 3.
So $\phi(S)=S$ and $\psi=\phi \mid_S $ is an automorphism of $G_{F_{k+1}-1}$.
By the Case 2a, $\psi$ is either an identity isomorphism or one that fixes all but
two vertices, interchanging $\frac{1}{2}F_k$ with $\frac{3}{2}F_k$. 
Therefore, $\phi$ fixes each vertex in
 $[1,F_{k+1}-1]\backslash \{\frac{1}{2}F_k,\frac{3}{2}F_k\}$.

Vertex $F_{k+1}$ is the only degree 2 vertex that does not have a neighbour of degree 2.
Therefore, $\phi(F_{k+1})=F_{k+1}$. Let $y \geq F_{k+2}$ be a degree 1 vertex, such that
$y \neq F_{k+2}+\frac{1}{2}F_{k-3}$.
Vertex $y$ is adjacent to  $F_{k+3}-y$, and since $F_{k+3}-y\leq F_{k+1}$, it follows that
 $\phi(F_{k+3}-y)=F_{k+3}-y$.
 Also since $y$ is adjacent to $F_{k+3}-y$, it follows that $\phi(y)$ is 
 adjacent to $F_{k+3}-y$.
 Vertex $y$ is the only degree 1 vertex that is adjacent to $F_{k+3}-y$, and 
 therefore $\phi(y)=y$.

Proceeding as in Case 1c, with two types of degree 2 vertices
(type one are degree 2 vertices in the interval
$\left(F_{k+1}, F_{k+2}\right)$ and type two in$\left[F_{k+4}-N, N\right]$),
it follows that  $\phi$ fixes all vertices of degree 2 in $G_N$.

So, $\phi$ fixes almost all vertices of $G_N$,
with the possible exceptions that $\phi$ might interchange 
$\frac{1}{2}F_k$ with $\frac{3}{2}F_k$ 
and  interchange $\frac{1}{2}F_{k+3}$ with $F_{k+2}+\frac{1}{2}F_{k-3}$. 
Since $\frac{1}{2}F_k$ is adjacent to $\frac{1}{2}F_{k+3}$ 
and $\frac{3}{2}F_k$ is adjacent to $F_{k+2}+\frac{1}{2}F_{k-3}$, 
$\phi$ moves $\frac{1}{2}F_k$ iff $\phi$ moves $\frac{1}{2}F_{k+3}$.

Therefore, $\mbox{Aut}(G_N)$ consists of the identity and the
automorphism that
 interchanges $\frac{1}{2}F_k$ with $\frac{3}{2}F_k$, interchanges
   $\frac{1}{2}F_{k+3}$ with
 $F_{k+2}+\frac{1}{2}F_{k-3}$, and fixes all other vertices. \bigskip

\noindent{\sc Case} 5: $N \in \left[F_{k+3}-\frac{1}{2}F_k, F_{k+3}\right)$. 

\begin{center}
\begin{tikzpicture}[scale=.85]
\draw[black, thick] (-6.4,0) -- (6.4,0);
\filldraw[black] (-6.4,0) circle (1pt) node[anchor=south] {$F_{k-5}$};
\filldraw[black] (-3.2,0) circle (1pt) node[anchor=south] {$F_{k-4}$};
\filldraw[black] (0,0) circle (1pt) node[anchor=south] {$F_{k-3}$};
\filldraw[black] (3.2,0) circle (1pt) node[anchor=south] {$F_{k-2}$};
\filldraw[black] (6.4,0) circle (1pt) node[anchor=south] {$F_{k-1}$};
\filldraw[black] (-4.8,0) circle (1pt) node[anchor=south] {$\frac{F_{k-3}}{2}$};
\filldraw[black] (4.8,0) circle (1pt) node[anchor=south] {$\frac{F_{k}}{2}$};

\draw[black, thick] (-6.4,-2) -- (6.4,-2);
\filldraw[black] (-6.4,-2) circle (1pt) node[anchor=south] {$F_{k-1}$};
\filldraw[black] (-3.2,-2) circle (1pt) node[anchor=south] {$F_{k}$};
\filldraw[black] (0,-2) circle (1pt) node[anchor=south] {$F_{k+1}$};
\filldraw[black] (3.2,-2) circle (1pt) node[anchor=south] {$F_{k+2}$};
\filldraw[black] (6.6,-2) circle (0pt) node[anchor=south] {$F_{k+3}$};
\filldraw[black] (6.4,-2) circle (1pt);
\filldraw[black] (1.6,-2) circle (1pt) node[anchor=south] {$\frac{F_{k+3}}{2}$};

\filldraw[black] (4.0,-1.9) circle (0pt) node[anchor=south] {\scalebox{0.8}{$\frac{2F_{k+2}+F_{k-3}}{2}$}};
\filldraw[black] (4.0,-2) circle (1pt);

\filldraw[black] (5.6,-1.9) circle (0pt) node[anchor=south] {\scalebox{0.8}{$\frac{2F_{k+3}-F_{k}}{2}$}};
\filldraw[black] (5.6,-2) circle (1pt) {};

\foreach \x in {1,...,13}
\filldraw[black] (6.2-0.2*\x,-2) circle (0.5pt) node[anchor=north] {2};

\filldraw[red] (6.0,-1.2) circle (0pt) node[anchor=south] {$\scriptstyle N$};
\draw[dotted, thick] (6.0,-1.2)--(6.0,-2);

\filldraw[red] (3.6,-1.2) circle (0pt) node[anchor=south] {$\scriptstyle F_{k+4}-N$};
\draw[dotted, thick] (3.6,-1.2)--(3.6,-2);

\foreach \x in {1,...,2}
\filldraw[black] (3.6-0.2*\x,-2) circle (0.5pt) node[anchor=north] {1};

\foreach \x in {1,...,7}
\filldraw[black] (3.2-0.2*\x,-2) circle (0.5pt) node[anchor=north] {2};

\filldraw[black] (1.6,-2) circle (0.5pt) node[anchor=north] {1};

\foreach \x in {1,...,8}
\filldraw[black] (1.6-0.2*\x,-2) circle (0.5pt) node[anchor=north] {2};

\foreach \x in {1,...,16}
\filldraw[black] (0-0.2*\x,-2) circle (0.5pt) node[anchor=north] {3};

\filldraw[black] (-0.8,-2) circle (1pt) node[anchor=south] {$\frac{3F_k}{2}$};

\filldraw[black] (-2.2,-2) circle (1pt) node[anchor=south] {$\frac{2F_k+F_{k-3}}{2}$};

\foreach \x in {1,...,16}
\filldraw[black] (-3.2-0.2* \x, -2) circle (0.5pt) node[anchor=north] {4};

\filldraw[black] (6.4,0) circle (0.5pt) node[anchor=north] {4};

\foreach \x in {1,...,7}
\filldraw[black] (6.4-0.2*\x,0) circle (0.5pt) node[anchor=north] {5};

\filldraw[black] (4.8,0) circle (0.5pt) node[anchor=north] {4};

\foreach \x in {1,...,4}
\filldraw[black] (4.8-0.2*\x,0) circle (0.5pt) node[anchor=north] {5};

\filldraw[red] (4.0,0.8) circle (0pt) node[anchor=south] {$\scriptstyle F_{k+3}-N$};
\draw[dotted, thick] (4.0,0.8)--(4.0,0);

\foreach \x in {1,...,4}
\filldraw[black] (4.0-0.2*\x,0) circle (0.5pt) node[anchor=north] {4};

\foreach \x in {1,...,16}
\filldraw[black] (3.2-0.2*\x,0) circle (0.5pt) node[anchor=north] {5};

\foreach \x in {1,...,16}
\filldraw[black] (0-0.2*\x,0) circle (0.5pt) node[anchor=north] {6};

\foreach \x in {1,...,7}
\filldraw[black] (-3.2-0.2*\x,0) circle (0.5pt) node[anchor=north] {7};

\filldraw[black] (-4.8,0) circle (0.5pt) node[anchor=north] {6};

\foreach \x in {1,...,8}
\filldraw[black] (-4.8-0.2*\x,0) circle (0.5pt) node[anchor=north] {7};

\end{tikzpicture}
\end{center}
\medskip

Let $S=\{1,...,F_{k+1}-1\}$. Then $S$ is a set of vertices of degree at least 3.
Since any automorphism sends the
vertices of degree at least 3 to the vertices of degree at least 3,
$\phi(S)=S$ and $\psi=\phi |_S $ is an automorphism of $G_{F_{k+1}-1}$.
 By  Case 2a, $\psi$ is either
 an identity isomorphism or one that interchanges  $\frac{1}{2}F_k$ with $\frac{3}{2}F_k$.
 However, the degrees of $\frac{1}{2}F_k$ and $\frac{3}{2}F_k$ 
 are 4 and 3 respectively, and so for all  $x<F_{k+1}$, $\phi(x)=x$.

Vertex $F_{k+1}$ is the only degree 2 vertex that does not have a neighbour 
of degree 2, and so $\phi(F_{k+1})=F_{k+1}$. 
 Vertex $\frac{1}{2}F_{k+3}$ is the only degree 1 vertex  adjacent to $\frac{1}{2}F_k$, 
 and so $\phi(\frac{1}{2}F_{k+3})=F_{k+3}$.
 
  Let $y \geq F_{k+2}$ be a degree 1 vertex.
Then $y$ is adjacent to $F_{k+3}-y$, and since $F_{k+3}-y\leq F_{k+1}$, it follows that
$\phi(F_{k+3}-y)=F_{k+3}-y$.
Also, since $y$ is adjacent to $F_{k+3}-y$, it follows that 
 $\phi(y)$ is adjacent to $F_{k+3}-y$.
However,  $y$ is the only degree 1 vertex adjacent to $F_{k+3}-y$ and so $\phi(y)=y$.
Similar to Case 1c, $\phi$ fixes all vertices of degree 2 in $G_N$.

So  $\phi$ fixes all vertices of $G_N$; that is, $\mbox{Aut}(G_N)$ consists only of 
the identity.
This completes Case 5, and hence the proof of the induction step. \bigskip

By mathematical induction,  the proof of the theorem is complete.\end{proof}

\section{Concluding remarks}\label{se:conclusion}
This work may be seen as a starting point for other graphs
formed by sums in any set given by a linear recurrence of order 2.
Some properties for such graphs are known or conjectured (see \cite{Cost:08}
and \cite{FKMOP:14} for references).   Many of the known proofs
(including some here) for Fibonacci-sum graphs may extend accordingly.

\section*{References}


\begin{thebibliography}{10}

\bibitem{AEH:78}K. Alladi, P. Erd\H{o}s, and V. E. Hoggatt Jr., 
On additive partitions of integers,
{\em Discrete Math.} {\bf 22} (1978), 201--211.

\bibitem{Barw:06} B. Barwell,
Problem 2732, Problems and conjectures,
{\em Journal of Recreational Mathematics} {\bf 34} (2006), 220--223.

\bibitem{Bodl:88}
H. L. Bodlaender,
Dynamic programming on graphs with bounded treewidth,
{\em Proc. 15th International Colloquium on Automata, Languages and Programming},
pp.~105--118 in 
 Lecture Notes in Computer Science {\bf  317}, Springer-Verlag, 1988.
 
 \bibitem{Bodl:96}H. L. Bodlaender, A linear time algorithm for finding 
 tree-decompositions of small treewidth,
 {SIAM J. Comput.} {\bf 25} (1996), 1305--1317.
 
\bibitem{CoWa:83} C. J. Colbourn and J. A. Wald, 
Steiner trees, partial 2-trees, and minimum IFI networks,
{\em Networks} {\bf 13} (1983), 159--167.

\bibitem{Cost:08}G. Costain, 
{\em On the additive graph generated by a subset of the natural numbers},
Master's thesis, McGill University, June 2008.  Available at:
\url{digitool.library.mcgill.ca/thesisfile21911.pdf}; accessed 15 May 2017.

 \bibitem{Dies:10} R.~Diestel,
 {\em Graph theory}, 4th ed.,
 Springer, Graduate texts in Mathematics {\bf 173}, 2010.
 
\bibitem{FKMOP:14}
K.~Fox, W.~B.~Kinnersley, D.~McDonald, N.~Orlow, and G.~J.~Puleo,
Spanning paths in Fibonacci-Sum graphs,
{\em Fibonacci Quart.} {\bf 52} (2014), 46--49.

 \bibitem{Hali:76} R. Halin, 
$S$ functions for graphs,
{\em J. Geometry} {\bf 8} (1976), 171--186.

\bibitem{SeTh:86} P. D. Seymour and R. Thomas,
Graph minors II: Algorithmic aspects of tree-width,
{\em J. Algorithms} {\bf 7} (1986), 309--322.

\bibitem{SeTh:93} P. D. Seymour and R. Thomas,
Graph searching and a min-max theorem for tree-width,
{\em J. Combin. Theory Ser. B} {\bf 58} (1993), 22--33. 

\bibitem{Silv:77} D. Silverman, 
Problem 566, 
{\em Journal of Recreational Mathematics} {\bf 9} (1976--1977), 298.

\bibitem{Zeck:72} 
E. Zeckendorf, 
Repr\'esentation des nombres naturels par une somme de nombres 
de Fibonacci ou de nombres de Lucas. 
{\em Bull. Soc. R. Sci. Li\`ege } {\bf 41} (1972),  179-182.

\end{thebibliography}
\end{document}